\newtheorem{definition}{Definition}[section]
\newtheorem{proposition}[definition]{Proposition}
\newtheorem{theorem}[definition]{Theorem}
\newtheorem{lemma}[definition]{Lemma}
\newtheorem{remark}{Remark}[section]
\title[Nonlinear damped wave equation]{
$L^p$-$L^q$ estimates for the damped wave equation
and the critical exponent for the nonlinear problem with slowly decaying data}
\author[M. Ikeda]{Masahiro Ikeda}
\address[M. Ikeda]{Department of Mathematics,
Faculty of Science and Technology, Keio University,
3-14-1, Hiyoshi, Kohoku-ku, Yokohama, 223-8522, Japan/
Center for Advanced Intelligence Project, RIKEN, Japan
}
\email{masahiro.ikeda@keio.jp/masahiro.ikeda@riken.jp}
\author[T. Inui]{Takahisa Inui}
\address[T. Inui]{
Department of Mathematics, Graduate School of Science,
Osaka University,
Toyonaka, Osaka 560-0043, Japan
}
\email{inui@math.sci.osaka-u.ac.jp}
\author[M. Okamoto]{Mamoru Okamoto}
\address[M. Okamoto]{
Division of Mathematics and Physics, Faculty of Engineering,
Shinshu University,
4-17-1 Wakasato, Nagano City 380-8553, Japan
}
\email{m\_okamoto@shinshu-u.ac.jp}
\author[Y. Wakasugi]{Yuta Wakasugi}
\address[Y. Wakasugi]{
Department of Engineering for Production and Environment,
Graduate School of Science and Engineering,
Ehime University,
3 Bunkyo-cho, Matsuyama, Ehime, 790-8577, Japan
}
\email{wakasugi.yuta.vi@ehime-u.ac.jp}
\date{\today}
\begin{document}

\keywords{Nonlinear damped wave equation;
$L^p$-$L^q$ estimates;
critical exponent}
\footnote[0]{2010 Mathematics Subject Classification. 35L71; 35A01; 35B40; 35B44}
\maketitle
\begin{abstract}
We study the Cauchy problem of the damped wave equation
\begin{align*}
	\partial_{t}^2 u - \Delta u + \partial_t u = 0
\end{align*}
and give sharp $L^p$-$L^q$ estimates of the solution
for $1\le q \le p < \infty\ (p\neq 1)$ with derivative loss.
This is an improvement of the so-called Matsumura estimates.
Moreover, as its application, we consider the nonlinear problem
with initial data in
$(H^s\cap H_r^{\beta}) \times (H^{s-1} \cap L^r)$ with
$r \in (1,2]$, $s\ge 0$, and $\beta = (n-1)|\frac{1}{2}-\frac{1}{r}|$,
and prove the local and global existence of solutions.
In particular, we prove the existence of the global solution with small initial data
for the critical nonlinearity with the power $1+\frac{2r}{n}$,
while it is known that the critical power $1+\frac{2}{n}$ belongs to the blow-up region when $r=1$.
We also discuss the asymptotic behavior of the global solution
in supercritical cases.
Moreover, we present blow-up results in subcritical cases.
We give estimates of lifespan by an ODE argument.
\end{abstract}

\tableofcontents

\section{Introduction}
The damped wave equation
\begin{align*}
	\partial_{t}^2 u - \Delta u + \partial_t u = 0
\end{align*}
is known as a model describing the wave propagation with friction,
and studied for long years.
In particular, for the Cauchy problem
\begin{align}%
\label{ldw}
	\left\{ \begin{array}{ll}
		\partial_{t}^2 u - \Delta u + \partial_t u = 0,
			&(t,x) \in (0,\infty) \times \mathbb{R}^n,\\
		u(0,x) = u_0(x),\ \partial_t u(0,x) = u_1(x),
			&x \in \mathbb{R}^n,
	\end{array}\right.
\end{align}%
where $(u_0, u_1)$ is a given function,
the asymptotic behavior of the solution has been investigated
by many mathematicians after the pioneering work by Matsumura \cite{Ma76}.
Matsumura \cite{Ma76} applied the Fourier transform to \eqref{ldw} and obtained
the formula
\begin{align*}
	u(t,x) = \mathcal{D}(t) (u_0+u_1) + \partial_t \mathcal{D}(t)u_0,
\end{align*}
where $\mathcal{D}(t)$ is defined by
\begin{align*}%
	\mathcal{D}(t) := e^{-\frac{t}{2}}\mathcal{F}^{-1} L(t,\xi) \mathcal{F}
\end{align*}%
with
\begin{align}%
\label{L}
	L(t,\xi) &:= \begin{cases}
	\displaystyle \frac{\sinh ( t \sqrt{1/4-|\xi|^2} )}{\sqrt{1/4 - |\xi|^2}}
		&(|\xi| < 1/2),\\[12pt]
	\displaystyle \frac{\sin ( t \sqrt{|\xi|^2-1/4})}{\sqrt{|\xi|^2 - 1/4}}
		&(|\xi| > 1/2).
	\end{cases}
\end{align}%
Using the above formula, he proved
the so-called Matsumura estimates ($L^p$-$L^q$ estimates)
\begin{align}
\label{mat_est}
	\| u(t) \|_{L^p} &\lesssim \langle t \rangle^{-\frac{n}{2}\left( \frac{1}{q} - \frac{1}{p} \right)}
		\left( \| u_0 \|_{L^q} + \| u_1 \|_{L^q} \right) \\
\nonumber
		&\quad + e^{-t/4} \left( \| u_0 \|_{H^{[\frac{n}{2}]+1}}
				+ \| u_1 \|_{H^{[\frac{n}{2}]}} \right),
\end{align}
where
$1 \le q \le 2 \le p \le \infty, \langle t \rangle:=(1+|t|^2)^{1/2}$,
$[n/2]$ denotes the integer part of $n/2$,
and the notation $f \lesssim g$ stands for
$f \le C g$ with some constant $C>0$.
The first term and the second term in the right-hand side are
corresponding to the low and high frequency part of the solution, respectively.
The estimates \eqref{mat_est} indicate that
the low frequency part of the solution behaves like that of the heat equation
\begin{align}
\label{h}
	\partial_t v - \Delta v = 0.
\end{align}
Here, we recall the well-known $L^p$-$L^q$ estimates for the heat equation
\begin{align}
\label{h_lplq}
	\| \mathcal{G}(t) g \|_{L^p}
	\lesssim t^{-\frac{n}{2}\left( \frac{1}{q} - \frac{1}{p} \right)} \| g \|_{L^q},
\end{align}
where $1 \le q \le p \le \infty$, $g \in L^q(\mathbb{R}^n)$,
and
\[
	\mathcal{G}(t) := \mathcal{F}^{-1} e^{-t|\xi|^2} \mathcal{F}.
\]
Namely, $\mathcal{G}(t)g$ is the solution of \eqref{h} with $v(0) = g$
(see \cite{GiGiSa}).
Also, we see from \eqref{mat_est} that the high frequency part causes
derivative losses like the wave equation
\begin{align}
\label{w}
	\partial_t^2 w - \Delta w = 0.
\end{align}
We also recall the estimates for the wave equation:
\begin{align}
\label{w_lp}
	\| \mathcal{W}(t) g  \|_{L^p} \lesssim \langle t \rangle^{\delta'_p} \| g \|_{H^{\beta-1}_p}
\end{align}
with some constant $\delta'_p > 0$,
where $1 < p < \infty$, $g\in H^{\beta-1}_p(\mathbb{R}^n)$,
$\beta = (n-1)| \frac{1}{2}- \frac{1}{p}|$,
and
\[
	\mathcal{W}(t) := \mathcal{F}^{-1} \frac{\sin (t |\xi|)}{|\xi|} \mathcal{F},
\]
namely, $\mathcal{W}(t)g$ is the solution of \eqref{w} with
$(w, \partial_t w)(0) = (0, g)$
(see \cite{Sj70, Mi80}). 
Here, we set 
$H^s_p(\mathbb{R}^n) := \{ f \in \mathcal{S}' (\mathbb{R}^n) ;
\| f \|_{H^s_p} = \| \langle \nabla \rangle^s f \|_{L^p} < \infty \}$.
However, in contrast to the estimates \eqref{h_lplq} and \eqref{w_lp},
the Matsumura-type estimate \eqref{mat_est}
requires the restriction $q\le 2 \le p$,
and the derivative losses of the high frequency part seems not sharp.
Therefore, we expect that the estimate \eqref{mat_est} can be improved.

Indeed, in the following
we give an improvement of the Matsumura-type estimate \eqref{mat_est}.
Let $\chi_{\le 1}(\nabla)$ and $\chi_{>1}(\nabla)$
be the cut-off Fourier multipliers defined by \eqref{chi} for low and high frequency, respectively. 
Our first result reads as follows. 

\begin{theorem}[$L^p$-$L^q$ estimates]\label{thm_lplq}
Let
$1\le q \le p < \infty$ with $p\neq 1$,
$s_1 \ge s_2$,
and
$\beta = (n-1) | \frac{1}{2} - \frac{1}{p} |$.
Then,
there exists $\delta_p > 0$ such that
\begin{align}%
\label{lplq}
	&\| |\nabla|^{s_1} \mathcal{D}(t) g \|_{L^p}\\
\nonumber
	&\lesssim \langle t \rangle^{-\frac{n}{2}\left( \frac{1}{q} - \frac{1}{p} \right)-\frac{s_1-s_2}{2}}
			\| |\nabla|^{s_2}  \chi_{\le 1}(\nabla) g \|_{L^q}
	+ e^{-\frac{t}{2}} \langle t \rangle^{\delta_p}
		\| |\nabla|^{s_1} \chi_{>1}(\nabla) g \|_{H_p^{\beta -1}},\\
\label{lplqdt}
	&\| |\nabla|^{s_1} \partial_t \mathcal{D}(t) g \|_{L^p}\\
\nonumber
	&\lesssim \langle t \rangle^{%
				-\frac{n}{2}\left( \frac{1}{q} - \frac{1}{p} \right)-\frac{s_1-s_2}{2}-1}
			\| |\nabla|^{s_2}  \chi_{\le 1}(\nabla) g \|_{L^q}
	+ e^{-\frac{t}{2}} \langle t \rangle^{\delta_p}
		\| |\nabla|^{s_1} \chi_{>1}(\nabla) g \|_{H_p^{\beta}},
\end{align}%
for $t> 0$,
provided that the right-hand side is finite.
\end{theorem}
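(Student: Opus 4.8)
The plan is to decompose $g=\chi_{\le 1}(\nabla)g+\chi_{>1}(\nabla)g$ and to estimate the low- and high-frequency pieces by completely different mechanisms, reflecting the heat-like and wave-like behaviour encoded in \eqref{L}. Throughout, the apparent singularity of \eqref{L} at $|\xi|=1/2$ is removable, since $L(t,\cdot)$ is analytic in $|\xi|^2$, so no special care is needed at that sphere.

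\emph{Low frequency.} On $\{|\xi|\le 1\}$ I would extract the parabolic part of the multiplier $e^{-t/2}L(t,\xi)$ explicitly. Writing $\sqrt{1/4-|\xi|^2}=\tfrac12\sqrt{1-4|\xi|^2}$ for $|\xi|<1/2$ gives
\[
e^{-t/2}L(t,\xi)=\frac{e^{-\frac t2(1-\sqrt{1-4|\xi|^2})}-e^{-\frac t2(1+\sqrt{1-4|\xi|^2})}}{\sqrt{1-4|\xi|^2}},
\]
where $\tfrac12(1-\sqrt{1-4|\xi|^2})\ge|\xi|^2$, so the leading term is dominated by the heat symbol $e^{-t|\xi|^2}$, while the second term and the whole annulus bounded away from the origin contribute only $O(e^{-ct})$. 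Fixing a small $c>0$ and a cutoff $\tilde\chi$ that equals one on $\supp\chi_{\le1}$, I factor
\[
|\xi|^{s_1-s_2}e^{-t/2}L(t,\xi)\,\tilde\chi(\xi)=\bigl(|\xi|^{s_1-s_2}e^{-ct|\xi|^2}\bigr)\,b_t(\xi),\qquad b_t(\xi):=e^{ct|\xi|^2}e^{-t/2}L(t,\xi)\tilde\chi(\xi).
\]
The first factor is the symbol of $|\nabla|^{s_1-s_2}\mathcal{G}(ct)$, and I would verify the Mikhlin--H\"ormander bounds $|\partial_\xi^\alpha b_t(\xi)|\lesssim|\xi|^{-|\alpha|}$ uniformly in $t>0$ on $\{|\xi|\le1\}$, so that $b_t(\nabla)$ is bounded on $L^q$ uniformly in $t$ (for $q=1$ one uses instead a uniform-in-$t$ $L^1$ bound on its kernel, available since the frequency support is compact). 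Applying $b_t(\nabla)$ first and then invoking \eqref{h_lplq} together with the scaling identity $\mathcal{F}^{-1}[|\xi|^{a}e^{-t|\xi|^2}]=t^{-(n+a)/2}\mathcal{F}^{-1}[|\eta|^ae^{-|\eta|^2}](\cdot/\sqrt t)$ yields the factor $\langle t\rangle^{-\frac n2(\frac1q-\frac1p)-\frac{s_1-s_2}{2}}$ claimed in \eqref{lplq}.

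\emph{High frequency.} On $\{|\xi|>1\}$ I would write $\sin=\frac1{2i}(e^{i\cdot}-e^{-i\cdot})$ and, after extracting the derivative weight $|\nabla|^{s_1}\langle\nabla\rangle^{\beta-1}\chi_{>1}(\nabla)$ that realizes the $H_p^{\beta-1}$ norm, reduce $|\nabla|^{s_1}\mathcal{D}(t)\chi_{>1}(\nabla)$ to the Fourier integral operators $T_t^\pm=\mathcal{F}^{-1}a_t^\pm(\xi)\mathcal{F}$ with amplitude
\[
a_t^\pm(\xi)=e^{\pm it\sqrt{|\xi|^2-1/4}}\bigl(\sqrt{|\xi|^2-1/4}\,\langle\xi\rangle^{\beta-1}\bigr)^{-1}\tilde\chi_{>1}(\xi),
\]
i.e.\ an amplitude of order $-\beta$ times the oscillation $e^{\pm it\sqrt{|\xi|^2-1/4}}$. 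Since $\sqrt{|\xi|^2-1/4}=|\xi|+r(\xi)$ with $r$ a symbol of order $-1$ on $\{|\xi|>1\}$, the genuine phase is the half-wave phase $\pm t|\xi|$ and $e^{\pm itr(\xi)}$ is an order-zero symbol whose seminorms grow only polynomially in $t$; the core estimate is therefore the $L^p$ boundedness of the half-wave propagator with the sharp loss $\beta=(n-1)|\frac12-\frac1p|$. This is exactly the Miyachi--Peral phenomenon behind \eqref{w_lp}, and I would obtain $\|T_t^\pm\|_{L^p\to L^p}\lesssim\langle t\rangle^{\delta_p}$ either by citing \eqref{w_lp} and \cite{Sj70,Mi80} after absorbing the order-zero factors by symbolic calculus, or directly by a dyadic decomposition $\tilde\chi_{>1}=\sum_{j\ge0}\varphi_j$ in which each piece is rescaled to unit frequency, estimated by the fixed-frequency oscillatory-integral bound, and resummed. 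The resulting polynomial growth $\langle t\rangle^{\delta_p}$ is harmless against the prefactor $e^{-t/2}$, giving the high-frequency term of \eqref{lplq}.

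Finally, for \eqref{lplqdt} I would differentiate $\partial_t(e^{-t/2}L)=e^{-t/2}(\partial_tL-\tfrac12 L)$. On low frequency $\partial_t$ brings down a factor $\tfrac12(1-\sqrt{1-4|\xi|^2})\sim|\xi|^2$ on the leading term, upgrading the parabolic factor to $|\xi|^{s_1-s_2+2}e^{-ct|\xi|^2}$ and hence producing the extra $\langle t\rangle^{-1}$; on high frequency $\partial_t$ replaces $\sin$ by $\sqrt{|\xi|^2-1/4}\,\cos$, raising the amplitude order from $-\beta$ to $-\beta+1$ and thus the loss from $H_p^{\beta-1}$ to $H_p^{\beta}$, while the lower-order term $-\tfrac12L$ is treated exactly as for $\mathcal{D}(t)$. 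The main obstacle is the high-frequency step: proving the sharp, $L^p$-valid derivative loss $\beta$ for the oscillatory multiplier with only polynomial growth in $t$, which is precisely the improvement over the Matsumura estimate \eqref{mat_est}.
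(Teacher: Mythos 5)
Your outline is correct, and the high-frequency half is essentially the paper's argument: write $\sin$ as exponentials, peel off the half-wave phase via $e^{it\sqrt{|\xi|^2-1/4}}=e^{it|\xi|}e^{it(\sqrt{|\xi|^2-1/4}-|\xi|)}$, control the order-zero correction by Mikhlin's theorem with polynomially growing constants, and invoke the Miyachi--Peral bound \eqref{w_lp}; the treatment of $\partial_t\mathcal{D}$ (the time derivative acting as $\sim|\xi|^2$ on the leading low-frequency branch, and raising the amplitude order by one at high frequency) also matches the paper. Where you genuinely diverge is the low-frequency part. The paper proves pointwise bounds on the convolution kernel $\mathfrak{d}(t,x)$, namely $||\nabla|^s\mathfrak{d}(t,x)|\lesssim\min(|x|^{-1},\langle t\rangle^{-1/2})^{s+n}$, by repeated integration by parts organized through an explicit formula for $\partial_1^k L$ (Lemmas \ref{lem_lowexpint} and \ref{lem_derivk}), and then concludes by Young's inequality; you instead factor the multiplier as $(|\xi|^{s_1-s_2}e^{-ct|\xi|^2})\,b_t(\xi)$ and reduce to the known heat estimates \eqref{h_lplq} plus a $t$-uniform Mikhlin bound for $b_t$. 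Your route is shorter where it applies, but the paper's kernel estimates pay for themselves later: they are reused (with the refinement of Lemma \ref{lem_derivkg} and Lemma \ref{mulpoint}) to prove the difference estimate of Theorem \ref{thm_lplqdiff}, which your factorization does not directly give.

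One soft spot: at the endpoint $q=1$ (allowed in the statement) Mikhlin's theorem is unavailable on $L^1$, and your parenthetical claim that a $t$-uniform $L^1$ bound for the kernel of $b_t(\nabla)$ follows ``since the frequency support is compact'' is not by itself a proof --- compact frequency support gives a bounded smooth kernel but no integrable decay. To get $\|\mathcal{F}^{-1}b_t\|_{L^1}\lesssim1$ uniformly in $t$ you need the quantitative derivative bounds $|\partial_\xi^\alpha b_t(\xi)|\lesssim\min(t^{|\alpha|/2},|\xi|^{-|\alpha|})$, whose verification is essentially the computation of Lemma \ref{lem_derivk}; so at this endpoint your argument collapses back into the paper's kernel analysis. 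A cleaner fix is available: since $b_t(\nabla)$ and $|\nabla|^{s_1-s_2}\mathcal{G}(ct)$ commute, apply the heat propagator first (mapping $L^q\to L^p$ with the stated decay, including $q=1$) and then $b_t(\nabla)$ on $L^p$ with $1<p<\infty$, where Mikhlin does apply. With that rearrangement your low-frequency argument is complete and genuinely independent of the paper's.
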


We will prove this theorem in the next section.
The main ideas of the proof are the following.
To remove the restriction of the exponent
$1 \le q \le 2 \le p \le \infty$,
we derive a pointwise estimate for the convolution kernel for low frequency part.
Also, to make the derivative losses sharp, we apply the estimate \eqref{w_lp}
to the high frequency part.

\begin{remark}
Chen, Fan and Zhang \cite{ChFaZh14, ChFaZh15}
stated similar $L^p$-$L^q$ estimates for the damped fractional wave equation.
However, unfortunately, the proof seems incomplete.
For the damped wave equation, we give a complete proof.
Moreover, our argument remains valid for the damped fractional wave equation, through minor modifications.
\end{remark}

In the proof of Theorem \ref{thm_lplq},
we explicitly write a leading part of the 
convolution kernel of $\mathcal{D}(t)g$ in the low frequency part,
which has the same coefficient as that of the heat kernel
(see Lemmas \ref{lem_derivk} and \ref{lem_derivkg} below). 
Accordingly, the difference in the low frequency part satisfies a better 
time-decay estimate.

\begin{theorem}\label{thm_lplqdiff}
Let
$1\le q \le p < \infty$ with $p\neq 1$,
$s_1 \ge s_2$,
and
$\beta = (n-1) | \frac{1}{2} - \frac{1}{p} |$.
Then, there exists $\delta_p > 0$ such that
\begin{align}%
\label{lplqdiff}
	\| |\nabla|^{s_1} (\mathcal{D}(t) - \mathcal{G}(t)) g \|_{L^p}
	&\lesssim \langle t \rangle^{%
			-\frac{n}{2}\left( \frac{1}{q} - \frac{1}{p} \right)-\frac{s_1-s_2}{2}-1}
			\| |\nabla|^{s_2} \chi_{\le 1}(\nabla) g \|_{L^q} \\
\nonumber
	&\quad + e^{-\frac{t}{2}} \langle t \rangle^{\delta_p}
		\| |\nabla|^{s_1} \chi_{>1}(\nabla) g \|_{H_p^{\beta -1}},
\end{align}%
for $t\ge 1$,
provided that the right-hand side is finite.
\end{theorem}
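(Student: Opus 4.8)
The plan is to split $\mathcal{D}(t)-\mathcal{G}(t)$ into its low- and high-frequency parts via the multipliers $\chi_{\le 1}(\nabla)$ and $\chi_{>1}(\nabla)$, and to treat the two ranges by entirely different mechanisms. For the high-frequency part the heat symbol satisfies $e^{-t|\xi|^2}=e^{-t/2}e^{-t(|\xi|^2-1/2)}$ on $\{|\xi|>1/2\}$, so after factoring out $e^{-t/2}$ the operator $|\nabla|^{s_1}\chi_{>1}(\nabla)\mathcal{G}(t)$ becomes a Fourier multiplier whose symbol obeys the Mikhlin conditions uniformly in $t\ge 1$ (the polynomial growth $|\xi|^{s_1}\langle\xi\rangle^{1-\beta}$ is absorbed by $e^{-t(|\xi|^2-1/2)}$); this yields $\||\nabla|^{s_1}\chi_{>1}(\nabla)\mathcal{G}(t)g\|_{L^p}\lesssim e^{-t/2}\||\nabla|^{s_1}\chi_{>1}(\nabla)g\|_{H_p^{\beta-1}}$, which is dominated by the high-frequency term in \eqref{lplqdiff}. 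The high-frequency part of $\mathcal{D}(t)$ is controlled directly by the high-frequency term of \eqref{lplq} in Theorem \ref{thm_lplq}. Hence the entire difficulty is concentrated in the low-frequency part.

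For the low-frequency part I would write the operator as convolution against the kernel
\[
	K(t,x):=\mathcal{F}^{-1}\Bigl[\,|\xi|^{s_1-s_2}\chi_{\le 1}(\xi)\bigl(e^{-t/2}L(t,\xi)-e^{-t|\xi|^2}\bigr)\Bigr](x),
\]
so that $|\nabla|^{s_1}\chi_{\le1}(\nabla)(\mathcal{D}(t)-\mathcal{G}(t))g = K(t,\cdot)\ast\bigl(|\nabla|^{s_2}\chi_{\le1}(\nabla)g\bigr)$ up to harmless overlap of the cut-offs. By Young's inequality with $\frac1r=1-(\frac1q-\frac1p)$, the claim reduces to $\|K(t,\cdot)\|_{L^r}\lesssim t^{-\frac n2(1-\frac1r)-\frac{s_1-s_2}{2}-1}$ for $t\ge1$. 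To obtain this I would analyse $e^{-t/2}L-e^{-t|\xi|^2}$ on $\supp\chi_{\le1}$. On the annulus $\{1/2\le|\xi|\le1\}$ both $e^{-t/2}L(t,\xi)$ (the oscillatory sine branch times $e^{-t/2}$) and $e^{-t|\xi|^2}\le e^{-t/4}$ are $O(e^{-ct})$, contributing a kernel bounded by $e^{-ct}\lesssim t^{-N}$ for any $N$. On $\{|\xi|<1/2\}$, writing $\omega=\sqrt{1/4-|\xi|^2}$, I would use
\[
	e^{-t/2}\frac{\sinh(t\omega)}{\omega}
	=\frac{1}{2\omega}e^{t(\omega-1/2)}-\frac{1}{2\omega}e^{-t(\omega+1/2)},
\]
the second branch being $O(e^{-t/2})$. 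Comparing the leading branch with the heat symbol via $\frac{1}{2\omega}=\frac{1}{\sqrt{1-4|\xi|^2}}=1+2|\xi|^2+O(|\xi|^4)$ and $\omega-1/2=-|\xi|^2-|\xi|^4-\cdots$ gives
\[
	\frac{1}{2\omega}e^{t(\omega-1/2)}-e^{-t|\xi|^2}
	=e^{-t|\xi|^2}\bigl(2|\xi|^2-t|\xi|^4+\cdots\bigr).
\]
This is exactly the cancellation recorded in Lemmas \ref{lem_derivk} and \ref{lem_derivkg}: the leading Gaussian factor of $\mathcal{D}(t)$ in the low frequency carries the same coefficient as $\mathcal{G}(t)$, so the difference symbol vanishes to order $|\xi|^2$ relative to $e^{-t|\xi|^2}$.

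Finally, I would convert this symbol bound into a pointwise kernel bound through the integration-by-parts/parabolic-scaling argument behind those lemmas: the substitution $\xi\mapsto t^{-1/2}\xi$ sends $e^{-t|\xi|^2}$ to $e^{-|\xi|^2}$, the factor $|\xi|^{s_1-s_2}$ to $t^{-(s_1-s_2)/2}|\xi|^{s_1-s_2}$, and the $O(|\xi|^2)$ correction to $O(t^{-1}|\xi|^2)$, so that, together with the $t^{-n/2}$ from the $n$-dimensional inverse Fourier transform, $K(t,x)=t^{-\frac{n+s_1-s_2}{2}-1}\Psi(x/\sqrt t)$ with $\Psi\in L^r$. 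Computing $\|K(t,\cdot)\|_{L^r}$ then produces the extra gain $t^{-1}$ and gives the stated low-frequency bound; combining with the exponentially small annulus and second-branch pieces, and using $t\ge1$ so that $\langle t\rangle\simeq t$, completes \eqref{lplqdiff}. The main obstacle is this last step: making the pointwise kernel estimate rigorous despite the singular factor $|\xi|^{s_1-s_2}$ at the origin (when $s_1>s_2$) and the $\omega\to0$ degeneracy of $\frac{1}{2\omega}$ as $|\xi|\to1/2$, and checking that every correction term — including $t|\xi|^4$, which is only $O(|\xi|^2)$ once weighted by $e^{-t|\xi|^2}$ on the effective region $|\xi|\lesssim t^{-1/2}$ — produces no worse than the claimed $t^{-1}$ gain; this is precisely what Lemmas \ref{lem_derivk} and \ref{lem_derivkg} are designed to supply.
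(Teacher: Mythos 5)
Your proposal follows essentially the same route as the paper: the same low/high frequency splitting, the same key cancellation between the leading branch $\frac{1}{2\omega}e^{t(\omega-1/2)}$ and the heat symbol producing a difference of size $(|\xi|^2+t|\xi|^4)e^{-t|\xi|^2}$ (this is exactly Lemma \ref{mulpoint}), and the same reduction via pointwise kernel bounds, obtained by integration by parts and parabolic rescaling, followed by Young's inequality (Propositions \ref{prop_pointwisediff} and \ref{prop_lpbounddiff}). The only cosmetic differences are that the paper controls the high-frequency part of $\mathcal{G}(t)$ by convolving with $\mathcal{F}^{-1}[|\cdot|^{-r}\chi_{\ge 1}]\in L^1$ and invoking heat $L^p$-$L^q$ estimates rather than your Mikhlin argument, and that it records the low-frequency kernel bound as $\min(|x|^{-1},\langle t\rangle^{-1/2})^{s+n+2}$ rather than as an exactly self-similar profile; the obstacles you flag (the $\omega\to 0$ degeneracy near $|\xi|=1/2$ and the singular factor $|\xi|^{s_1-s_2}$) are handled there by restricting the expansion to $|\xi|\le 1/4$ and treating the remaining annuli by exponential decay.
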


\begin{remark}
Nishihara \cite{Ni03MathZ} gives an improvement of the estimate of \eqref{mat_est}
in the $3$-dimensional case
of the form
\begin{align}
\label{ni_est}
	\| \mathcal{D}(t) f - \mathcal{G}(t) f - e^{-t/2} \mathcal{W}(t) f \|_{L^p}
	\lesssim  t^{-\frac{3}{2}\left( \frac{1}{q} - \frac{1}{p} \right)-1} \| f\|_{L^q},
\end{align}
where $1\le q \le p \le \infty$
(for other space dimensions, see \cite{MaNi03, HoOg04, Na04, SaWa17}).
In other words, 
$\mathcal{D}(t) f$ is asymptotically expressed as
\begin{align*}
	\mathcal{D}(t)f \sim \mathcal{G}(t) f + e^{-t/2} \mathcal{W}(t)f\quad (t\to \infty),
\end{align*}
and it implies the high frequency part causing the derivative loss
is explicitly given by $\mathcal{W}(t)f$ when $n=3$.
Therefore, combining \eqref{ni_est} and \eqref{w_lp},
we can obtain similar estimates as \eqref{lplq}, \eqref{lplqdiff}.
However, our approach is direct and has broad utility.
\end{remark}

Our next purpose is the application of Theorems \ref{thm_lplq} and \ref{thm_lplqdiff}
to the Cauchy problem of the nonlinear damped wave equation
\begin{align}%
\label{dw}
	\left\{ \begin{array}{ll}
		\partial_{t}^2 u - \Delta u + \partial_t u = \mathcal{N}(u),
			&(t,x) \in (0,\infty) \times \mathbb{R}^n,\\
		u(0,x) = \varepsilon u_0(x),\ \partial_t u(0,x) = \varepsilon u_1(x),
			&x \in \mathbb{R}^n,
	\end{array}\right.
\end{align}%
where
$\mathcal{N}(u)$ denotes the nonlinearity,
$(u_0, u_1)$ is a given function, which denotes the shape of the initial data,
and $\varepsilon$ is a positive parameter, which denotes the size of the initial data.

Our concern is to prove the local and global existence of the solution,
asymptotic behavior, and blow-up of solutions
when the initial data do not belong to $L^1(\mathbb{R}^n)$ in general.
More precisely, we show the existence of the global solution with small data
even for the critical nonlinearity.

Based on the linear estimates \eqref{mat_est},
many mathematicians studied the global existence and blow-up of solutions
(see \cite{
HaKaNa04DIE, HaKaNa06, HaKaNa07JMAA, HaNa17JMAA, HoOg04, 
IkMiNa04, IkNiZh06, IkTa05, Kar00, KaNaOn95, KiQa02,
LiZh95, MaNi03, Ma76, Na04, Ni03MathZ, 
ToYo01, YaMi00, Zh01} and the references therein).
In particular, from these studies, the critical exponent was determined
as $p_c(n) = 1+ \frac{2}{n}$,
provided that the initial data decay sufficiently fast at the spatial infinity.
Here, the critical exponent means the threshold of the
global existence and the blow-up of solutions for small initial data.
More precisely, 
if $p$ is larger than the critical exponent $p_c$, 
then for any shape $(u_0,u_1)$, 
there exists $\varepsilon_0>0$ such that the solution exists globally in time for any $\varepsilon \in (0,\varepsilon_0)$,
and 
if $p$ is smaller than the critical exponent $p_c$, 
then there exists a shape $(u_0,u_1)$ and $\varepsilon_0>0$ such that the solution blows up in finite time for any $\varepsilon \in (0,\varepsilon_0)$.

However, there are only few results when the initial data slowly decay,
namely, do not belong to $L^1(\mathbb{R}^n)$,
at the spatial infinity.
Nakao and Ono \cite{NakOn93} studied the case
$(u_0, u_1) \in H^1(\mathbb{R}^n) \times L^2(\mathbb{R}^n)$
and they proved the global well-posedness with small data when $p \ge 1+\frac{4}{n}$.
We also refer the reader to \cite{Na11} for
the global existence of solutions with slowly decaying initial data in modulation spaces,
but the nonlinearity should be a polynomial of $u$.
Ikehata and Ohta \cite{IkOh02} proved the global existence of solutions for
small data
$(u_0, u_1) \in (H^1(\mathbb{R}^n) \cap L^r(\mathbb{R}^n))
\times (L^2(\mathbb{R}^n) \cap L^r(\mathbb{R}^n))$
when $p > 1+ \frac{2r}{n}$.
Here
\begin{align*}%
	&r \in [1,2] &(n=1,2),\\
	&r\in \left[ \frac{\sqrt{n^2+16n}-n}{4}, \min\left\{ 2, \frac{n}{n-2} \right\} \right]
	&(3\le n \le 6).
\end{align*}%
They also proved for any $n \ge 1$ and for the nonlinearity
$\mathcal{N}(u) = |u|^{p-1}u$ with $1<p<1+\frac{2r}{n}$,
there exists $(u_0, u_1) \in (H^1(\mathbb{R}^n) \cap L^r(\mathbb{R}^n))
\times (L^2(\mathbb{R}^n) \cap L^r(\mathbb{R}^n))$
such that there is no global solution even if
the size of the initial data $\varepsilon$ is arbitrary small.
Here we shall give a remark on their results.
In the supercritical case
$p> 1+\frac{2r}{n}$,
their solution belongs only to
$C([0,\infty); H^1(\mathbb{R}^n)) \cap C^1([0,\infty) ; L^2(\mathbb{R}^n))$
and we do not know whether $u(t) \in L^r(\mathbb{R}^n)$.
It is a natural question whether the solution $u$
has the same integrability near the spatial infinity as the initial data.
Narazaki and Nishihara \cite{NaNi08} further considered the asymptotic profile
of the solution under the assumption
$(u_0, u_1) \sim  \langle x \rangle^{-kn}$ with $k \in (0,1]$.
They proved that when $n \le 3$ and $p > 1+\frac{2}{kn}$
(which corresponds to the condition $p > 1+\frac{2r}{n}$
in terms of the Lebesgue space $L^r(\mathbb{R}^n)$),
the small data global existence holds and the solution is approximated by
$\varepsilon \mathcal{G}(t)(u_0+u_1)$.
Moreover, in \cite{IkeInWa17},
we extended the above results to higher dimensional cases
in terms of the weighted Sobolev spaces
\[
	H^{s,\alpha}(\mathbb{R}^n)
	= \langle x \rangle^{-\alpha} H^s(\mathbb{R}^n)
	:=\{f\in \mathcal{S}'(\mathbb{R}^n) ;\ \|\langle x\rangle ^{\alpha}\langle\nabla\rangle^sf\|_{L^2}<\infty\},
\]
where the symbol $\langle\nabla\rangle^s$ stands for the Fourier multiplier
$\mathcal{F}^{-1} \left[ \langle \xi \rangle^s \hat{f}(\xi) \right] (x)$.
We showed that if
$p > 1+\frac{2r}{n}$
and
if the initial data satisfy
$(u_0, u_1) \in
(H^{s,0}\cap H^{0,\alpha})(\mathbb{R}^n) \times (H^{s-1,0}\cap H^{0,\alpha})(\mathbb{R}^n)$
with
$\alpha > n(\frac{1}{r}-\frac12)$
and sufficiently small,
then the global solution uniquely exists.
However, in this setting, we cannot treat the critical case $p = 1+\frac{2r}{n}$.

In the present paper, based on the improved $L^p$-$L^q$ estimates
given in Theorem \ref{thm_lplq}, we further generalize the results of \cite{IkeInWa17}
when the initial data belong to
$L^r(\mathbb{R}^n)$
with
$r \in (1,2]$.
In particular, we prove the small data global existence
in the critical case $p = 1+\frac{2r}{n}$.
This result is completely new when $r \in (1,2)$.
We recall that when $r=1$, $p=1+\frac{2}{n}$, and $\mathcal{N}(u) = |u|^p$,
the local solution blows up in a finite time
even if the size of the initial data $\varepsilon$ is arbitrary small,
provided that the shape of the initial data
$(u_0, u_1) \in (H^1(\mathbb{R}^n)\cap L^1(\mathbb{R}^n))
\times (L^2(\mathbb{R}^n) \cap L^1(\mathbb{R}^n))$
has positive integral average
(see \cite{Zh01}).
Namely, when $r=1$, the critical power $p=1+\frac{2}{n}$
belongs to the blow-up case.
On the other hand, when $r=2$,
as we explained before, Nakao and Ono \cite{NakOn93} showed
that the critical power $p=1+\frac{4}{n}$ belongs to the global-existence case.
Our main result for the nonlinear problem (Theorem \ref{thm_gwp})
shows that for $r \in (1,2)$, the critical exponent $p=1+\frac{2r}{n}$
belongs to the global-existence case,
although some restriction on the range of $r$ is imposed.
Also, we refer the reader to \cite{We81} in which
the global existence of solutions to the critical semilinear heat equation
$v_t - \Delta v = v^{1+\frac{2r}{n}}$
was proved,
when the initial datum belongs to $L^r(\mathbb{R}^n)$
and is sufficiently small.

To state our results, we first define a solution.
We say that a function
$u \in L^{\infty}(0,T ; L^2(\mathbb{R}^n))$
is a mild solution of \eqref{dw} if
$u$
satisfies the integral equation
\begin{align*}%
	u(t) = \varepsilon \mathcal{D}(t) (u_0+u_1)
		+ \varepsilon \partial_t\mathcal{D}(t) u_0
		+ \int_0^t \mathcal{D}(t-\tau) \mathcal{N}(u(\tau)) d\tau
\end{align*}%
in
$L^{\infty}(0,T ; L^2(\mathbb{R}^n))$.
Moreover, we shall call $u$
an $H^s$-mild solution (resp. an $H^s\cap L^r$-mild solution)
if $u$ is a mild solution of \eqref{dw} satisfying
$u \in C([0,T) ; H^s(\mathbb{R}^n))$
(resp. $u \in C([0,T) ; H^s(\mathbb{R}^n) \cap L^r(\mathbb{R}^n))$).

We assume that
there exists
$p > 1$
such that
$\mathcal{N} \in C^{p_0}(\mathbb{R})$
with some integer
$p_0 \in [0,p]$
and
\begin{align}%
\label{N}
	\begin{cases}
	\mathcal{N}^{(l)}(0) = 0,\\
	\left| \mathcal{N}^{(l)}(u) - \mathcal{N}^{(l)}(v) \right|
	\le C |u-v| ( |u|+|v|)^{p-l-1}
	\end{cases}
	\quad (l=0,1, \ldots, p_0).
\end{align}%

Before going to the global existence results,
we first prepare the local existence of a unique $H^s\cap L^r$-mild solution
based on the linear estimates in Theorem \ref{thm_lplq}.
We note that
an $H^s \cap L^r$-mild solution is also an $H^s$-mild solution for any $r \in (1,2]$.
After introducing the existence of $H^s \cap L^r$-mild solution,
we also discuss the blow-up alternative for $H^s$-mild solution

\begin{theorem}[Local existence]\label{thm_lwp}
Let
$n\ge 1$ and $p > 1$,
and assume \eqref{N}.
Let
$s \ge 0$ and $r \in (1,2]$
satisfy
$[s] \le p_0$,
$r \ge \frac{2(n-1)}{n+1}$
and
\begin{align*}%
	\begin{array}{ll}
	\displaystyle
		1< p
		< \infty,
			&\mbox{if}\quad 2s\ge n,\\
	\displaystyle
		1 < p
		\le 1 + \frac{ \min \left\{n, 2 \right\} }{n-2s}
			&\mbox{if}\quad 2s<n.
	\end{array}
\end{align*}%
Let $\beta = (n-1)\left( \frac{1}{r} - \frac12 \right)$ and let the initial data satisfy
\begin{align*}%
	(u_0, u_1) \in
		(H^s(\mathbb{R}^n) \cap H_r^{\beta}(\mathbb{R}^n))
		\times (H^{s-1}(\mathbb{R}^n) \cap L^r(\mathbb{R}^n) ).
\end{align*}%
Then, for any
$\varepsilon >0$,
there exists
$T >0$
such that
the problem \eqref{dw} admits a unique $H^s\cap L^r$-mild solution
\begin{align*}%
	u \in C([0,T) ; H^s(\mathbb{R}^n) \cap L^r(\mathbb{R}^n) ),\quad
	\partial_t u \in C([0,T); H^{s-1}(\mathbb{R}^n)).
\end{align*}%
Moreover,
for the lifespan of the $H^s$-mild solution defined by
\begin{align}
\label{t2}
	T_2(\varepsilon) &:= \sup\left\{ T \in (0,\infty] ; 
		\ \mbox{there exists a unique $H^s$-mild solution of \eqref{dw}} \right.\\
\nonumber
		&\qquad \quad \left.
		\mbox{with}\ 
		u \in C([0,T);H^s(\mathbb{R}^n)), \partial_t u \in C([0,T);H^{s-1}(\mathbb{R}^n))
		\right\},
\end{align}
we have the blow-up alternative:
if $T_2(\varepsilon) < \infty$, then the solution satisfies
\begin{align}%
\label{bu}
	\liminf_{t\to T_2(\varepsilon)} \| (u, \partial_t u)(t) \|_{H^s \times H^{s-1}} = \infty.
\end{align}%
\end{theorem}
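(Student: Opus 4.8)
The plan is to construct the solution by the contraction mapping principle and then deduce the blow-up alternative by a standard continuation argument. Work in $X_T := C([0,T];H^s(\mathbb{R}^n)\cap L^r(\mathbb{R}^n))$ with $\|u\|_{X_T}:=\sup_{0\le t\le T}(\|u(t)\|_{H^s}+\|u(t)\|_{L^r})$, and define
\[
	\Phi[u](t):=\varepsilon\mathcal{D}(t)(u_0+u_1)+\varepsilon\partial_t\mathcal{D}(t)u_0+\int_0^t\mathcal{D}(t-\tau)\mathcal{N}(u(\tau))\,d\tau.
\]
First I would bound the linear part $\Phi[0]$ by Theorem \ref{thm_lplq}. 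Using \eqref{lplq} and \eqref{lplqdt} with $(p,q)=(2,2)$ (so that $\beta=0$ and the high-frequency derivative loss is absent) controls the $H^s$-norm, while $(p,q)=(r,r)$ with $s_1=s_2=0$ controls the $L^r$-norm. In the latter the high-frequency terms require $u_0+u_1\in H_r^{\beta-1}$ and $u_0\in H_r^{\beta}$; since $r\ge\frac{2(n-1)}{n+1}$ forces $\beta=(n-1)(\frac1r-\frac12)\le 1$, we have $H_r^\beta\hookrightarrow H_r^{\beta-1}$ and $L^r=H_r^0\hookrightarrow H_r^{\beta-1}$, so the data classes $u_0\in H^s\cap H_r^\beta$, $u_1\in H^{s-1}\cap L^r$ are exactly matched. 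This yields $\|\Phi[0]\|_{X_T}\le C\varepsilon(\|u_0\|_{H^s\cap H_r^\beta}+\|u_1\|_{H^{s-1}\cap L^r})=:\tfrac12 M$.

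Next I would estimate the Duhamel term. Applying Theorem \ref{thm_lplq} to $\mathcal{D}(t-\tau)\mathcal{N}(u(\tau))$ for each fixed $\tau$ and integrating, the kernels appearing (the low-frequency factor $\langle t-\tau\rangle^0=1$ and the high-frequency factor $e^{-(t-\tau)/2}\langle t-\tau\rangle^{\delta_p}$) stay uniformly bounded on $[0,T]$, so the $H^s$-norm is dominated by $\int_0^t\|\mathcal{N}(u(\tau))\|_{H^s}\,d\tau$ and the $L^r$-norm by $\int_0^t\|\mathcal{N}(u(\tau))\|_{L^r}\,d\tau$. The nonlinear factors are handled through \eqref{N}: the pointwise bound $|\mathcal{N}(u)|\lesssim|u|^p$ gives $\|\mathcal{N}(u)\|_{L^r}\lesssim\|u\|_{L^{pr}}^p$, and the fractional Leibniz/chain rule (applicable because $[s]\le p_0$, so $\mathcal{N}\in C^{[s]}$ with the derivative estimates in \eqref{N}) gives $\|\mathcal{N}(u)\|_{H^s}\lesssim\|u\|_{L^{a}}^{p-1}\|u\|_{H^s_{b}}$ for appropriate Hölder exponents. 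Closing these bounds in terms of $\|u\|_{X_T}$ uses the Gagliardo--Nirenberg and Sobolev embeddings of $H^s\cap L^r$, and this is exactly where the upper restriction on $p$ enters ($1<p\le1+\frac{\min\{n,2\}}{n-2s}$ when $2s<n$, and any $1<p<\infty$ when $2s\ge n$, where $H^s\hookrightarrow L^\infty$). One obtains $\|\Phi[u]-\Phi[0]\|_{X_T}\lesssim T\|u\|_{X_T}^p$, and, using the Lipschitz-type estimates in \eqref{N} for the difference, $\|\Phi[u]-\Phi[v]\|_{X_T}\lesssim T(\|u\|_{X_T}+\|v\|_{X_T})^{p-1}\|u-v\|_{X_T}$; the gain of the factor $T$ comes from the time integration over the bounded interval.

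Then, with $M:=2C\varepsilon(\|u_0\|_{H^s\cap H_r^\beta}+\|u_1\|_{H^{s-1}\cap L^r})$ and $T>0$ chosen so small that $CTM^{p-1}\le\tfrac12$, the map $\Phi$ is a contraction on $\{u\in X_T:\|u\|_{X_T}\le M\}$, and the Banach fixed point theorem produces the unique $H^s\cap L^r$-mild solution; continuity of the linear flow together with the Duhamel estimates gives $u\in C([0,T);H^s\cap L^r)$, and differentiating the integral equation and invoking \eqref{lplqdt} gives $\partial_t u\in C([0,T);H^{s-1})$. For the blow-up alternative, I would note that running the same contraction with only the $(p,q)=(2,2)$ estimates (which need no $L^r$ or $H_r^\beta$ data) yields a local $H^s$-solution whose existence time depends only on $\|(u_0,u_1)\|_{H^s\times H^{s-1}}$; hence if $T_2(\varepsilon)<\infty$ while $\liminf_{t\to T_2(\varepsilon)}\|(u,\partial_t u)(t)\|_{H^s\times H^{s-1}}<\infty$, one could restart the solution from a time close to $T_2(\varepsilon)$ and extend it beyond $T_2(\varepsilon)$, contradicting the definition \eqref{t2} and establishing \eqref{bu}.

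The step I expect to be the main obstacle is the nonlinear estimate: verifying the fractional chain/Leibniz rule for $\mathcal{N}(u)$ simultaneously in $H^s$ and $L^r$ and checking that every Gagliardo--Nirenberg/Sobolev embedding it requires is available within the prescribed ranges of $(p,s,r)$. This bookkeeping is precisely what pins down the admissible range of $p$, the compatibility condition $[s]\le p_0$, and the lower bound $r\ge\frac{2(n-1)}{n+1}$ (equivalently $\beta\le1$).
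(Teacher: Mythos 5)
There is a genuine gap in the contraction step. You claim the difference estimate
\[
\|\Phi[u]-\Phi[v]\|_{X_T}\lesssim T\bigl(\|u\|_{X_T}+\|v\|_{X_T}\bigr)^{p-1}\|u-v\|_{X_T}
\]
in the full norm of $X_T=C([0,T];H^s\cap L^r)$. For $s\ge 1$ and non-integer $p$ (recall the theorem allows any $p>1$ with $\mathcal{N}$ only of class $C^{p_0}$, $p_0\le p$), the map $u\mapsto\mathcal{N}(u)$ is not Lipschitz at the level of $s-1$ (let alone $s$) derivatives: the hypothesis \eqref{N} only provides H\"older-type control of the derivatives of $\mathcal{N}$ up to order $p_0$, and there is no usable bound for $\||\nabla|^{s-1}(\mathcal{N}(u)-\mathcal{N}(v))\|_{L^2}$ in terms of $\|u-v\|_{H^s}$. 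This is exactly why the paper does \emph{not} contract in the strong norm: it proves boundedness of $\Psi$ on the ball $X(T,M)$ in the strong norm (Lemmas \ref{lem_duh} and \ref{lem_non}), but runs the contraction in the weaker metric $d(u,v)=\|u-v\|_{L^\infty(0,T;L^2)}$ of $Z(T)$, where only the derivative-free Lipschitz bound $\|\mathcal{N}(u)-\mathcal{N}(v)\|_{L^\gamma}\lesssim\|u-v\|_{L^2}(\|u\|_{L^{q(p-1)}}+\|v\|_{L^{q(p-1)}})^{p-1}$ is needed, and then invokes the closedness of $X(T,M)$ in $Z(T)$ (Lemma \ref{lem_close}, via the sequential Banach--Alaoglu theorem) to place the fixed point back in $X(T,M)$. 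Your argument as written would only cover smooth nonlinearities (e.g.\ integer $p$ with $p_0=p$), not the stated hypotheses.

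Two secondary points. First, you propose to dominate the $H^s$-norm of the Duhamel term by $\int_0^t\|\mathcal{N}(u(\tau))\|_{H^s}\,d\tau$; this discards the one-derivative gain in the high-frequency estimate \eqref{lplq} (the $H_p^{\beta-1}$ norm with $\beta\le1$), and putting $s$ full derivatives on $\mathcal{N}(u)$ requires more smoothness of $\mathcal{N}$ than $[s]\le p_0$ guarantees; the paper only ever estimates $\||\nabla|^{s-1}\mathcal{N}(u)\|_{L^2}$ (plus low-frequency Lebesgue norms), which is what makes $[s]\le p_0$ sufficient. Second, bounding the $L^r$-norm of the Duhamel term via $\|\mathcal{N}(u)\|_{L^r}$ requires $u\in L^{pr}$, which is not always available from $H^s\cap L^r$ in the stated range of $(p,s,r)$; the paper instead measures $\mathcal{N}(u)$ in $L^{\sigma_1}$ with $\sigma_1=\max\{1,r/p\}\le r$ and uses the smoothing $L^{\sigma_1}\to L^r$ of the low-frequency propagator. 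Your linear-part estimate and the continuation argument for the blow-up alternative are in line with the paper.
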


\begin{remark}\label{rem_r}
{\rm (i)} We remark that the assumption
$r \ge \frac{2(n-1)}{n+1}$
implies
$\beta =  (n-1) \left(\frac{1}{r} - \frac{1}{2} \right) \le 1$,
namely, the derivative loss in the linear estimate
does not exceed
$1$.

\noindent
{\rm (ii)} In the previous result \cite{IkeInWa17},
the local existence requires
$p \ge \max\{ 1+\frac{r}{n}, 1+\frac{r}{2} \}$,
which comes from estimates involving weighted Sobolev norms.
Theorem \ref{thm_lwp} removes this condition and we do not need
any restriction from below on $p$.

\noindent
{\rm (iii)} In Theorem \ref{thm_lwp}, 
we show the blow-up criterion only for the $H^s$-mild solution. 
It is difficult to obtain the blow-up criterion for the $H^s \cap L^r$-mild solution for $r\in[1,2)$
because the derivative loss prevents us from extending the local solution.
Namely, the solution does not have the persistence property,
which means that
the solution $u(t)$ belongs to the same space as the initial data
with continuous dependence on the time variable.
\end{remark}

We prove Theorem \ref{thm_lwp} in Section 3.
Our proof is based on the $L^p$-$L^q$ estimates given in Theorem \ref{thm_lplq}
and the contraction mapping principle.
To control the nonlinear term, we introduce an appropriate norm
for the nonlinearity (see \eqref{Y})
, which is inspired by
Hayashi, Kaikina and Naumkin \cite{HaKaNa04DIE}.
Then, to estimate the derivative of the nonlinearity,
we apply the fractional chain rule.

Moreover, in the critical or supercritical case
$p \ge 1+\frac{2r}{n}$,
we have the global existence of the $H^s \cap L^r$-mild solution
for the small initial data.

\begin{theorem}[Global existence of $H^s\cap L^r$-mild solution for small data]\label{thm_gwp}
In addition to the assumption of Theorem \ref{thm_lwp},
we suppose
\begin{align*}%
	1+ \frac{2r}{n} \le p.
\end{align*}%
Then, there exists
$\varepsilon_0 =\varepsilon_0(n,p,r,s,
\|u_0\|_{H^s\cap H^{\beta}_r}, \|u_1\|_{H^{s-1}\cap L^r})
>0$
such that for any
$\varepsilon \in (0,\varepsilon_0]$,
the problem \eqref{dw} admits a unique global $H^s\cap L^r$-mild solution satisfying
\begin{align*}%
	u \in C([0,\infty) ; H^s(\mathbb{R}^n) \cap L^r(\mathbb{R}^n) ),\quad
	\partial_t u \in C([0,\infty) ; H^{s-1}(\mathbb{R}^n)).
\end{align*}%
\end{theorem}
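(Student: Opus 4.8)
The strategy is standard: prove global existence via the contraction mapping principle in a time-weighted space, using the local existence result (Theorem \ref{thm_lwp}) to guarantee a solution exists and using the sharp $L^p$-$L^q$ estimates (Theorem \ref{thm_lplq}) to obtain decay estimates that close the argument globally under the smallness hypothesis $1+\frac{2r}{n}\le p$. The key difference from the local theory is that now I must track the time-decay of the solution, not merely boundedness on a finite interval.

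First I would set up a complete metric space adapted to the anticipated decay. For the low-frequency part, the linear estimates \eqref{lplq} give $L^r\to L^p$ decay of order $\langle t\rangle^{-\frac{n}{2}(\frac1r-\frac1p)}$; combined with the $L^r$-norm of the data this suggests working in a space of functions $u$ for which the norms $\langle t\rangle^{\frac{n}{2}(\frac1r-\frac1p)}\|u(t)\|_{L^p}$ (over an appropriate range of exponents $p$, including $p=2$ and the nonlinearity-relevant Lebesgue exponents) and $\|u(t)\|_{L^r}$ and $\||\nabla|^s u(t)\|_{L^2}$ stay bounded. I would define the norm $\|u\|_X$ as the supremum over $t\in[0,\infty)$ of a weighted combination of these quantities, choosing the weights so that the linear part $\varepsilon\mathcal{D}(t)(u_0+u_1)+\varepsilon\partial_t\mathcal{D}(t)u_0$ has $X$-norm bounded by $C\varepsilon(\|u_0\|_{H^s\cap H^{\beta}_r}+\|u_1\|_{H^{s-1}\cap L^r})$ directly from Theorem \ref{thm_lplq}.

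Next I would estimate the Duhamel term $\Phi(u)(t)=\int_0^t\mathcal{D}(t-\tau)\mathcal{N}(u(\tau))\,d\tau$. Applying \eqref{lplq} to $\mathcal{N}(u(\tau))$, splitting the time integral at $\tau=t/2$, and using the nonlinearity bound $|\mathcal{N}(u)|\lesssim|u|^p$ from \eqref{N} together with the Lebesgue-exponent bookkeeping (so that $\|\mathcal{N}(u(\tau))\|_{L^{p/p}}=\||u(\tau)|^p\|_{L^1}\lesssim\|u(\tau)\|_{L^p}^p$, and similarly in the $L^r$-control of the source term), the decay rates multiply and integrate. The gain $\frac{n}{2}(\frac1r-\frac1p)$ from the kernel against the accumulated decay $p\cdot\frac{n}{2}(\frac1r-\frac1p)$ of $\|u(\tau)\|^p$ produces a time integral that converges precisely when $p\ge 1+\frac{2r}{n}$, yielding $\|\Phi(u)\|_X\lesssim\|u\|_X^p$; the borderline logarithmic case at equality must be absorbed by choosing the weight exponents slightly generously or verified to remain integrable. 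Together with the analogous Lipschitz/difference estimate $\|\Phi(u)-\Phi(v)\|_X\lesssim(\|u\|_X^{p-1}+\|v\|_X^{p-1})\|u-v\|_X$ (using the second line of \eqref{N}), the map $u\mapsto\varepsilon(\text{linear part})+\Phi(u)$ becomes a contraction on a small ball of $X$ once $\varepsilon$ is small, giving a unique global fixed point.

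\emph{Main obstacle.} The delicate step is the critical case $p=1+\frac{2r}{n}$: here the time integral in the Duhamel estimate sits exactly at the boundary of convergence, so the naive rate bookkeeping only gives a logarithmic divergence. The resolution — as in Weissler's treatment \cite{We81} of the critical heat equation — is to design the weighted norm $X$ so that the relevant $L^p$-weight has an exponent strictly below the scaling-critical one (exploiting the extra decay available from the range $q\le p$ in Theorem \ref{thm_lplq}), thereby trading a sharp rate for an integrable one, while still controlling all norms needed to recover the data space and to run the difference estimate. I would also need to confirm that the high-frequency contributions, which carry the exponential factor $e^{-t/2}\langle t\rangle^{\delta_p}$ and the derivative loss $\beta=(n-1)(\frac1r-\frac12)\le 1$, are harmless: their exponential decay dominates any polynomial growth, so they contribute bounded terms to $X$, but the fractional chain rule estimates for $\||\nabla|^s\mathcal{N}(u)\|$ must be carried through carefully to stay consistent with the local theory's function-space constraints.
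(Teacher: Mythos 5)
Your overall architecture (time-weighted contraction space, linear part controlled by Theorem \ref{thm_lplq}, Duhamel term split at $\tau=t/2$) matches the paper's proof, but two of your steps would not go through as written.

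First, the critical case. You diagnose a logarithmic divergence at $p=1+\frac{2r}{n}$ and propose to cure it by taking time-weights ``strictly below the scaling-critical one.'' This fix points in the wrong direction: weakening the weight on $\|u(t)\|_{L^{p\gamma}}$ weakens the decay of $\|\mathcal{N}(u(\tau))\|$ in the Duhamel integral and makes the convergence worse, while the linear part already saturates the critical rates, so there is no room to strengthen the weights either. The mechanism the paper actually uses — and which is already latent in your own bookkeeping when you place $|u|^p$ in $L^1$ — is a gain in \emph{spatial} integrability: $\mathcal{N}(u(\tau))\in L^{\sigma_1}$ with $\sigma_1=\max\{1,\frac{r}{p}\}<r$ strictly (this is exactly where $r>1$ enters). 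Estimating the far part of the Duhamel integral with $q=\sigma_1$ in Theorem \ref{thm_lplq} produces the extra prefactor $\langle t-\tau\rangle^{-\frac{n}{2}(\frac{1}{\sigma_1}-\frac{1}{r})}$ with a strictly positive exponent, and the total rate is $\langle t\rangle^{1-\frac{n}{2r}(p-1)}$, which is exactly $O(1)$ at $p=1+\frac{2r}{n}$, with any borderline logarithm absorbed by that strictly decaying prefactor (the paper's Lemma \ref{lem_duh} with the auxiliary norm \eqref{Y}). No sub-critical weight is needed, and no Weissler-type scaling-critical iteration is performed.

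Second, the contraction. You assert a full Lipschitz estimate $\|\Phi(u)-\Phi(v)\|_X\lesssim(\|u\|_X^{p-1}+\|v\|_X^{p-1})\|u-v\|_X$ in the norm $X$, which contains $\||\nabla|^s\cdot\|_{L^2}$. For the non-smooth nonlinearities permitted by \eqref{N} (with $p$ not an integer and possibly close to $1$), $\mathcal{N}(u)-\mathcal{N}(v)$ cannot be differentiated $s$ times in a way that is Lipschitz in $u-v$, so this difference estimate fails at the derivative level. The paper instead shows that $X(T,M)$ is closed in the weaker space $Z(T)=L^{\infty}(0,T;L^2)$ (Lemma \ref{lem_close}) and runs the fixed-point argument as a contraction with respect to the metric $d(u,v)=\|u-v\|_{Z(T)}$ only, using the pointwise bound in \eqref{N} with $l=0$ together with H\"older and Sobolev embedding; boundedness in $X$ and contraction in $Z$ together still yield a unique fixed point in $X(T,M(\varepsilon))$. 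You would need to adopt this (or an equivalent) device to close your argument.
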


The reason why the global solution exists even in the critical case $p=1+\frac{2r}{n}$
is that
the nonlinearity $\mathcal{N}(u)$ decays faster than the linear part
at the spatial infinity.
More precisely,
we see that $\mathcal{N}(u) \in L^{\sigma_1}(\mathbb{R}^n)$
with $\sigma_1=\max\{ 1, \frac{r}{p} \} < r$ (see Section 3),
while the linear part of the solution satisfies
$\varepsilon \mathcal{D}(t) (u_0+u_1)+\varepsilon \partial_t\mathcal{D}(t) u_0
\in L^r(\mathbb{R}^n)$.
This enables us to control the nonlinearity even in the critical case
$p = 1+\frac{2r}{n}$.

On the other hand, if we consider the global existence of the
$H^s$-mild solution and do not require that
$u \in C([0,\infty) ; L^r(\mathbb{R}^n))$,
then we do not need to impose $r \geq \frac{2(n-1)}{n+1}$.
\begin{theorem}[Global existence of $H^s$-mild solution for small data]\label{thm_gwp2}
Let $n \ge 1$ and $p>1$, and assume \eqref{N}.
Let $s \ge 0$ and $r \in (1,2]$ satisfy $[s] \le p_0$,
$r \in \left( \frac{\sqrt{n^2+16n}-n}{4}, 2 \right]$,
and
\begin{align*}%
	\begin{array}{ll}
	\displaystyle 1+\frac{2r}{n} \le p < \infty &\mbox{if} \quad 2s \ge n,\\[5pt]
	\displaystyle 1+\frac{2r}{n} \le p \le 1+ \frac{\min\{n,2\}}{n-2s}
		&\mbox{if} \quad 2s < n.
	\end{array}
\end{align*}%
Let the initial data satisfy
\begin{align*}%
	(u_0, u_1) \in
		(H^s(\mathbb{R}^n) \cap L^r(\mathbb{R}^n))
		\times (H^{s-1}(\mathbb{R}^n) \cap L^r(\mathbb{R}^n) ).
\end{align*}%
Then, there exists $\varepsilon_0 =\varepsilon_0(n,p,r,s,
\|u_0\|_{H^s\cap L^r}, \|u_1\|_{H^{s-1}\cap L^r}) > 0$
such that for any $\varepsilon \in (0,\varepsilon_0]$,
the problem \eqref{dw} admits a unique global $H^s$-mild solution satisfying
\begin{align*}%
	u \in C([0,\infty) ; H^s(\mathbb{R}^n) ),\quad
	\partial_t u \in C([0,\infty) ; H^{s-1}(\mathbb{R}^n)).
\end{align*}%
\end{theorem}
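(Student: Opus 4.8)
The plan is to solve the integral equation by a contraction argument, exactly as in Theorem~\ref{thm_lwp} but on a time-weighted space adapted to the $L^r$--$L^2$ decay. I would set
\begin{align*}
\Phi(u)(t) := \varepsilon\mathcal{D}(t)(u_0+u_1) + \varepsilon\partial_t\mathcal{D}(t)u_0 + \int_0^t \mathcal{D}(t-\tau)\mathcal{N}(u(\tau))\,d\tau
\end{align*}
and work in the closed ball of radius $2C\varepsilon$ in the Banach space $X$ with norm
\begin{align*}
\|u\|_X := \sup_{t\ge 0}\Big( \langle t\rangle^{\gamma}\|u(t)\|_{L^2} + \langle t\rangle^{\gamma+\frac{s}{2}}\||\nabla|^s u(t)\|_{L^2}\Big),
\qquad \gamma := \frac n2\Big(\frac1r-\frac12\Big).
\end{align*}
These are precisely the low-frequency decay rates furnished by Theorem~\ref{thm_lplq} with $q=r$, $p=2$, $s_2=0$ and $s_1\in\{0,s\}$. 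Since the target exponent is $p=2$, one has $\beta=(n-1)|\tfrac12-\tfrac12|=0$, so the high-frequency parts carry no derivative loss and decay exponentially; this is why the restriction $r\ge\frac{2(n-1)}{n+1}$ of Theorem~\ref{thm_gwp} is absent here.

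First I would bound the linear part of $\Phi$ in $X$. The low-frequency contribution is controlled by $\varepsilon\langle t\rangle^{-\gamma}(\|u_0\|_{L^r}+\|u_1\|_{L^r})$ and its $\dot H^s$ analogue through \eqref{lplq}--\eqref{lplqdt}, while the high-frequency contribution is bounded by $\varepsilon e^{-t/2}\langle t\rangle^{\delta_2}(\|u_0\|_{H^s}+\|u_1\|_{H^{s-1}})$, using $H^{s-1}\hookrightarrow H^{-1}$ for $s\ge0$; altogether $\|\Phi(0)\|_X\lesssim\varepsilon(\|u_0\|_{H^s\cap L^r}+\|u_1\|_{H^{s-1}\cap L^r})$.

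The core is the nonlinear estimate $\big\|\int_0^t\mathcal{D}(t-\tau)\mathcal{N}(u)\,d\tau\big\|_X\lesssim\|u\|_X^p$. By \eqref{N} and the Gagliardo--Nirenberg inequality, for an exponent $\sigma$ with $2\le p\sigma\le\frac{2n}{n-2s}$ one has $\|\mathcal{N}(u(\tau))\|_{L^\sigma}\lesssim\|u(\tau)\|_{L^{p\sigma}}^p\lesssim\langle\tau\rangle^{-\frac n2(\frac pr-\frac1\sigma)}\|u\|_X^p$. Applying the $L^\sigma\to L^2$ bound of \eqref{lplq} and splitting $\int_0^t=\int_0^{t/2}+\int_{t/2}^t$, the time factor reduces to
\begin{align*}
\int_0^t\langle t-\tau\rangle^{-A}\langle\tau\rangle^{-B}\,d\tau,
\qquad A:=\frac n2\Big(\frac1\sigma-\frac12\Big),\quad B:=\frac n2\Big(\frac pr-\frac1\sigma\Big).
\end{align*}
A short computation gives $A+B=\frac n2(\frac pr-\frac12)$, which equals $1+\gamma$ exactly at the critical power $p=1+\frac{2r}{n}$; hence the integral is $\lesssim\langle t\rangle^{1-A-B}=\langle t\rangle^{-\gamma}$ \emph{provided} $A<1$ and $B<1$, and then no logarithmic loss occurs. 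The requirement $p\sigma\ge2$ forces $\sigma\ge\frac{2n}{n+2r}$ (whence $A<1$ automatically, since $\frac{2n}{n+2r}>\frac{2n}{n+4}$ for $r<2$), while $B<1$ reads $\sigma<r$. Such a $\sigma$ exists if and only if $\frac{2n}{n+2r}<r$, i.e.\ $2r^2+nr-2n>0$, i.e.\ $r>\frac{\sqrt{n^2+16n}-n}{4}$ --- which is exactly the hypothesis and the only place it is used. The $\dot H^s$ part is treated the same way, except that near the diagonal $\tau\approx t$ the exponent $\frac n2(\frac1\sigma-\frac12)+\frac s2$ may exceed $1$, so there one transfers part of the derivatives onto the nonlinearity via the fractional Leibniz and chain rules, estimating $\||\nabla|^\theta\mathcal{N}(u(\tau))\|_{L^{\tilde\sigma}}\lesssim\|u(\tau)\|_{L^b}^{p-1}\||\nabla|^\theta u(\tau)\|_{L^c}$ and interpolating $\||\nabla|^\theta u\|_{L^c}$ between $\|u\|_{L^2}$ and $\||\nabla|^s u\|_{L^2}$; the growth restriction on $p$ guarantees all these exponents are admissible.

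Finally, the difference bound $\|\Phi(u)-\Phi(v)\|_X\lesssim(\|u\|_X+\|v\|_X)^{p-1}\|u-v\|_X$ follows from the Lipschitz estimate in \eqref{N} by the same splitting and interpolation, so for $\varepsilon_0$ small enough $\Phi$ is a contraction on the ball and has a unique fixed point, which is the desired global solution with the stated decay; time continuity $u\in C([0,\infty);H^s)$, $\partial_t u\in C([0,\infty);H^{s-1})$ is read off from the integral equation as in Theorem~\ref{thm_lwp}. I expect the main obstacle to be the sharp bookkeeping of the critical-case integral, and especially the redistribution of the $s$ derivatives near the diagonal so as to keep the time singularity integrable while still reproducing the sharp weight $\langle t\rangle^{-\gamma-s/2}$; the identity $A+B=1+\gamma$ together with the nonemptiness of the window $[\frac{2n}{n+2r},r)$, equivalent to $r>\frac{\sqrt{n^2+16n}-n}{4}$, is what renders the critical power $p=1+\frac{2r}{n}$ admissible.
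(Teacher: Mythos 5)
Your proposal follows essentially the same route as the paper's Section 4: a contraction argument in a time-weighted space that omits the $L^r$-component of the norm used for Theorem \ref{thm_gwp}, so that the only target exponents in the linear estimates are $2$ (where $\beta=0$, hence no derivative loss and no need for $r\ge\frac{2(n-1)}{n+1}$), and with the nonlinearity measured in $L^{\sigma}$ for $\sigma\ge\max\{1,\frac2p\}$ rather than $\sigma\ge\max\{1,\frac rp\}$. Your identification of where the hypothesis enters is exactly the paper's: the paper notes that $r>\frac{\sqrt{n^2+16n}-n}{4}$ gives $1+\frac{2r}{n}>\frac2r$, hence $\sigma_1=\max\{1,\frac2p\}<r$, which is precisely your nonemptiness of the window $\bigl[\tfrac{2n}{n+2r},r\bigr)$ and is what closes the critical-case time integral with the identity $A+B=1+\gamma$. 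The treatment of the $\dot H^s$ piece near the diagonal by shifting derivatives onto the nonlinearity also matches Lemmas \ref{lem_duh} and \ref{lem_non}.

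One step as written would not go through: the contraction estimate $\|\Phi(u)-\Phi(v)\|_X\lesssim(\|u\|_X+\|v\|_X)^{p-1}\|u-v\|_X$ in the \emph{full} norm of $X$. Under \eqref{N} alone, $\mathcal{N}$ need not be smooth enough to give a Lipschitz bound for $\||\nabla|^s(\mathcal{N}(u)-\mathcal{N}(v))\|$ when $s>0$ and $p$ is close to $1$ (e.g.\ $\mathcal{N}(u)=|u|^p$), so the $\dot H^s$-component of the difference cannot be controlled by $\|u-v\|_X$. The paper avoids this by running the fixed-point argument on the ball $X(T,M(\varepsilon))$ but with the weaker metric $d(u,v)=\|u-v\|_{L^\infty(0,T;L^2)}$ (the space $Z(T)$), using that the ball is closed in $Z(T)$ (Lemma \ref{lem_close}); only the $L^2$-difference estimate \eqref{psi_dif} is needed there. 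With this standard modification — which is already how Theorem \ref{thm_lwp}, to which you appeal, is proved — your argument is complete and coincides with the paper's.
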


\begin{remark}
Theorem \ref{thm_gwp2} states that
we can relax the condition
$r \in [\frac{2(n-1)}{n+1}, 2]$ in the case $n \ge 5$ if we do not require that
$u \in C([0,\infty) ; L^r(\mathbb{R}^n))$.
Indeed, concerning the assumption on the range of $r$,
we see that
\begin{align*}%
	(1,2] \cap \left[ \frac{2(n-1)}{n+1}, 2\right] \subset
	(1,2] \cap \left( \frac{\sqrt{n^2+16n}-n}{4}, 2\right]
	\quad \mbox{if and only if} \quad n\ge 5.
\end{align*}%
Therefore, for $n \ge 5$, the assumption of $r$ in Theorem \ref{thm_gwp2}
is weaker than that of Theorem \ref{thm_gwp}.
\end{remark}

Furthermore, in the supercritical case $p>1+\frac{2r}{n}$,
we prove that
the solution is approximated by that of the linear heat equation \eqref{h}
with the initial data $\varepsilon (u_0 + u_1)$.
This extends the results by \cite{NaNi08} to all space dimensions.
\begin{theorem}[Asymptotic behavior of global solutions]\label{thm_ab}
Let $u$ be the global $H^s\cap L^r$-mild solution constructed in Theorem \ref{thm_gwp}.
We further assume
$p > 1+\frac{2r}{n}$.
Then, for any $\delta>0$ and $t \ge 1$,
\begin{align*}
	\| |\nabla|^s (u(t) - \varepsilon \mathcal{G}(t) (u_0+u_1)) \|_{L^2}
	&\lesssim  \langle t \rangle^{%
		- \frac{n}{2}\left(\frac{1}{r}-\frac12\right) - \frac{s}{2}
		- \min\{1, \frac{n}{2r}(p-1)-1,
			\frac{n}{2}\left(\frac{1}{\sigma_1}-\frac{1}{r}\right)\}
			+\delta
		}\\ %
	&\quad \times
		\varepsilon ( \| u_0 \|_{H^s \cap H^{\beta}_r} + \| u_1 \|_{H^{s-1} \cap L^r}),\\
		\| u(t) - \varepsilon \mathcal{G}(t) (u_0+u_1) \|_{L^2}
	&\lesssim \langle t \rangle^{%
		- \frac{n}{2}\left(\frac{1}{r}-\frac12\right)
		- \min\{1, \frac{n}{2r}(p-1)-1,
			\frac{n}{2}\left(\frac{1}{\sigma_1}-\frac{1}{r}\right)\}
			+\delta
		}\\ %
	&\quad \times
		\varepsilon ( \| u_0 \|_{H^s \cap H^{\beta}_r} + \| u_1 \|_{H^{s-1} \cap L^r}),\\
		\| u(t) - \varepsilon \mathcal{G}(t)(u_0+u_1) \|_{L^r}
	&\lesssim \langle t \rangle^{%
			- \min\{
				1, \frac{n}{2r}(p-1)-1,
				\frac{n}{2}\left(\frac{1}{\sigma_1}-\frac{1}{r}\right),
				\frac{n}{2}\left(\frac{p}{r}-\frac{1}{q} \right)
			\}
			+\delta
		}\\ %
	&\quad \times
		\varepsilon ( \| u_0 \|_{H^s \cap H^{\beta}_r} + \| u_1 \|_{H^{s-1} \cap L^r}),
\end{align*}
where
$q= r$ if $2s \ge n$ and $q = \min\{ r, \frac{2n}{p(n-2s)} \}$ if $2s < n$,
and the implicit constant depends on $\delta$.
\end{theorem}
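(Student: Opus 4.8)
The plan is to begin from the Duhamel representation of the mild solution and subtract the heat profile, writing
\begin{align*}
	u(t) - \varepsilon \mathcal{G}(t)(u_0+u_1)
	&= \varepsilon\bigl(\mathcal{D}(t)-\mathcal{G}(t)\bigr)(u_0+u_1)
		+ \varepsilon \partial_t\mathcal{D}(t)u_0 \\
	&\quad + \int_0^t \mathcal{D}(t-\tau)\mathcal{N}(u(\tau))\,d\tau
	=: I(t) + II(t) + III(t).
\end{align*}
The terms $I$ and $II$ are purely linear. For $I$ I would apply Theorem \ref{thm_lplqdiff} with exponents $(q,p)=(r,2)$ and $(q,p)=(r,2)$ together with the derivative pair $(s_2,s_1)=(0,s)$ or $(0,0)$; this produces the decay $\langle t\rangle^{-\frac{n}{2}(\frac1r-\frac12)-\frac{s_1}{2}-1}$ for the low-frequency part, i.e.\ exactly one power of $\langle t\rangle$ faster than the heat rate, plus an exponentially small high-frequency remainder. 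For $II$ I would use the estimate \eqref{lplqdt} of Theorem \ref{thm_lplq}, which likewise gains the extra factor $\langle t\rangle^{-1}$ thanks to the additional $-1$ in its exponent. Hence both linear terms contribute the value $1$ inside the minimum, and their high-frequency pieces (controlled by $\|u_0\|_{H^{\beta}_r}$, $\|u_1\|_{L^r}$ and $\|u_0\|_{H^s}$, $\|u_1\|_{H^{s-1}}$) decay exponentially and are absorbed.

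The substantive work is the Duhamel term $III$. Here I would first recall, from the construction in Theorem \ref{thm_gwp}, the a priori decay bounds satisfied by $u$ in the solution space, namely $\|u(\tau)\|_{L^m}\lesssim \varepsilon\langle\tau\rangle^{-\frac{n}{2}(\frac1r-\frac1m)}$ for admissible $m$ (and the corresponding $|\nabla|^s$-weighted $L^2$ bound). Combined with the structural hypothesis \eqref{N}, which gives $|\mathcal{N}(u)|\lesssim|u|^p$ and therefore $\|\mathcal{N}(u(\tau))\|_{L^{\sigma}}\lesssim\|u(\tau)\|_{L^{p\sigma}}^p\lesssim \varepsilon^p\langle\tau\rangle^{-\frac{n}{2}(\frac{p}{r}-\frac{1}{\sigma})}$, this furnishes pointwise-in-time decay of the source. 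I would then split $\int_0^t=\int_0^{t/2}+\int_{t/2}^t$. On $[0,t/2]$ one has $\langle t-\tau\rangle\approx\langle t\rangle$, so applying the low-frequency $L^{\sigma_1}\!\to\! L^2$ (or $L^{\sigma_1}\!\to\! L^r$) estimate of Theorem \ref{thm_lplq} pulls out $\langle t\rangle^{-\frac{n}{2}(\frac{1}{\sigma_1}-\frac12)}$ and leaves a time integral whose convergence is governed by whether $\frac{n}{2}(\frac{p}{r}-\frac{1}{\sigma_1})$ exceeds $1$. On $[t/2,t]$ one instead freezes $\langle\tau\rangle\approx\langle t\rangle$ in the source and integrates the (integrable) singularity $\langle t-\tau\rangle^{-\frac{n}{2}(\frac{1}{\sigma_1}-\frac12)}$ against $d\tau$.

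Tracking these two regimes is exactly where the minimum in the statement emerges: the gain $\frac{n}{2}(\frac{1}{\sigma_1}-\frac1r)$ beyond the heat rate is the integrability improvement $\sigma_1<r$ of the nonlinearity, while the competing rate $\frac{n}{2r}(p-1)-1$ appears whenever the time integral over $[0,t/2]$ grows like a positive power of $t$ (supercriticality $p>1+\frac{2r}{n}$ guarantees this quantity is positive). For the $L^r$ estimate the near-diagonal piece $\int_{t/2}^t$ must be run through the $L^{q}\!\to\! L^r$ estimate with $q=\min\{r,\frac{2n}{p(n-2s)}\}$, the Sobolev constraint $pq\le\frac{2n}{n-2s}$ (when $2s<n$) ensuring $\|u(\tau)\|_{L^{pq}}$ is controlled; this is what produces the additional term $\frac{n}{2}(\frac{p}{r}-\frac1q)$ present only in the third inequality. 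I expect the main obstacle to be the careful bookkeeping of these three competing exponents, in particular the borderline cases where one of the time integrals is logarithmically divergent: these are precisely absorbed by the arbitrary $\delta>0$ loss, at the cost of a $\delta$-dependent implicit constant. The high-frequency contributions of $III$, having derivative loss but the factor $e^{-(t-\tau)/2}$, are handled by the convolution in time and decay exponentially, so they never compete with the polynomial rates and are discarded at the end.
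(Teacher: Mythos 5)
Your proposal follows essentially the same route as the paper: the same decomposition into the linear part (handled by Theorem \ref{thm_lplqdiff} and \eqref{lplqdt}, each gaining the extra $\langle t\rangle^{-1}$) and the Duhamel term, the same $\int_0^{t/2}+\int_{t/2}^t$ splitting with the decay of $\mathcal{N}(u)$ inherited from the $Y(\infty)$-bound of the global-existence construction, and the same identification of the competing exponents in the minimum. The only small discrepancy is that on the near-diagonal region $[t/2,t]$ the paper measures $\mathcal{N}(u(\tau))$ in $L^{\sigma_2}$ (resp.\ $L^{q}$ for the $L^r$ bound) rather than $L^{\sigma_1}$, so that the kernel singularity $\langle t-\tau\rangle^{-\frac{n}{2}(\frac{1}{\sigma_2}-\frac12)}$ has exponent $>-1$; this is precisely the bookkeeping you anticipate and does not change the outcome.
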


We next handle the subcritical case $p < 1+\frac{2r}{n}$.
We have the sharp lower bound
and an almost sharp upper bound of the lifespan.
To state the result, we define the
lifespan of $H^s\cap L^r$-mild solution by
\begin{align}
\label{tr}
	T_r(\varepsilon) &:= \sup\left\{ T \in (0,\infty] ; 
		\ \mbox{there exists a unique $H^s\cap L^r$-mild solution of \eqref{dw}}
		\right\}.
\end{align}

\begin{theorem}[Lower bound of the lifespan for $H^s\cap L^r$-mild solution]\label{thm_lb}
In addition to the assumption of Theorem \ref{thm_lwp},
we suppose that
\begin{align*}%
		p < 1+ \frac{2r}{n}.
\end{align*}%
Then, there exists
$\varepsilon_1 = \varepsilon_1(n,p,r,s,
\|u_0\|_{H^s\cap H^{\beta}_r}, \|u_1\|_{H^{s-1}\cap L^r}) > 0$
such that for any
$\varepsilon \in (0,\varepsilon_1]$,
the lifespan of $H^s\cap L^r$-mild solution $T_r(\varepsilon)$ is estimated as
\begin{align}%
\label{lb}
	T_r(\varepsilon) \gtrsim \varepsilon^{-1/\omega},
\end{align}%
where
$\omega = \frac{1}{p-1}-\frac{n}{2r}$
and
the implicit constant is independent of
$\varepsilon$.
\end{theorem}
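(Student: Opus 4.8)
The plan is to establish the lower bound directly by a contraction mapping argument for the integral equation on the \emph{full} interval $[0,T]$, with $T$ allowed to be as large as $c\,\varepsilon^{-1/\omega}$, rather than via a continuation criterion (which, by Remark~\ref{rem_r}(iii), is unavailable for $H^s\cap L^r$-mild solutions). Writing
\begin{align*}
	\Phi(u)(t) = \varepsilon \mathcal{D}(t)(u_0+u_1) + \varepsilon \partial_t\mathcal{D}(t)u_0 + \int_0^t \mathcal{D}(t-\tau)\mathcal{N}(u(\tau))\,d\tau,
\end{align*}
I would work in a complete metric space $X_T$ whose norm records the linear decay rates predicted by Theorem~\ref{thm_lplq}. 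Concretely, I would take, with $\sigma_1 = \max\{1,\frac{r}{p}\}$ and $\rho := p\sigma_1 = \max\{p,r\}$, a time-weighted norm of the schematic form
\begin{align*}
	\|u\|_{X_T} = \sup_{0\le t\le T}\Big( \|u(t)\|_{L^r} + \langle t\rangle^{\frac{n}{2}(\frac1r-\frac1\rho)}\|u(t)\|_{L^\rho} + \langle t\rangle^{\frac{n}{2}(\frac1r-\frac12)+\frac{s}{2}}\||\nabla|^s u(t)\|_{L^2}\Big),
\end{align*}
where the $L^r$-component deliberately carries \emph{no} time weight, since in the subcritical regime the solution need not decay there. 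Applying \eqref{lplq}--\eqref{lplqdt} with $(q,p)=(r,r),(r,\rho),(r,2)$ shows that the linear part of $\Phi(u)$ is bounded in $X_T$ by $C\varepsilon D_0$ with $D_0 := \|u_0\|_{H^s\cap H^\beta_r}+\|u_1\|_{H^{s-1}\cap L^r}$, uniformly in $T$.

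The heart of the matter is the Duhamel term. Using \eqref{N} together with the Gagliardo--Nirenberg inequality to reach the $L^\rho$ norm from the $L^r$ and $\||\nabla|^s\cdot\|_{L^2}$ pieces of $X_T$ (it is here that the upper bound on $p$ inherited from Theorem~\ref{thm_lwp} guarantees $\rho$ lies in the admissible range), and the fractional chain rule for the $\||\nabla|^s\cdot\|$ estimate, I would bound $\|\mathcal{N}(u(\tau))\|_{L^{\sigma_1}}\lesssim \langle\tau\rangle^{-b}\|u\|_{X_T}^p$ with $b = \frac{np}{2}\big(\frac1r-\frac{1}{p\sigma_1}\big)$ coming from the decay already encoded in $X_T$. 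Inserting the low-frequency part of \eqref{lplq}, the $L^r$-component of the Duhamel term is controlled by
\begin{align*}
	\int_0^t \langle t-\tau\rangle^{-a}\langle\tau\rangle^{-b}\,d\tau\;\|u\|_{X_T}^p, \qquad a = \tfrac{n}{2}\big(\tfrac{1}{\sigma_1}-\tfrac1r\big),
\end{align*}
and a direct computation gives $a+b = \frac{n(p-1)}{2r}$, which is $<1$ \emph{precisely because} $p<1+\frac{2r}{n}$. The standard estimate $\int_0^t\langle t-\tau\rangle^{-a}\langle\tau\rangle^{-b}\,d\tau\lesssim\langle t\rangle^{1-a-b}$ (valid for $a,b\ge0$, $a+b<1$) then produces the factor $\langle t\rangle^{\kappa}$ with $\kappa := 1-\frac{n(p-1)}{2r}>0$. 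The high-frequency part of \eqref{lplq} carries the factor $e^{-t/2}\langle t\rangle^{\delta_p}$ and so contributes no growth in $T$; the $L^\rho$ and $\||\nabla|^s\cdot\|_{L^2}$ components are handled identically, their weights being chosen so that, after dividing out the weight, the associated time integrals reproduce at most the same power $\langle t\rangle^{\kappa}$.

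Collecting these bounds yields the closed self-mapping inequality
\begin{align*}
	\|\Phi(u)\|_{X_T}\le C\varepsilon D_0 + C\,T^{\kappa}\,\|u\|_{X_T}^p,
\end{align*}
together with the analogous Lipschitz bound $\|\Phi(u)-\Phi(v)\|_{X_T}\le C\,T^{\kappa}(\|u\|_{X_T}^{p-1}+\|v\|_{X_T}^{p-1})\|u-v\|_{X_T}$, obtained from \eqref{N} with $l=0$ and Hölder in the same fashion. Restricting to the ball $\{\|u\|_{X_T}\le 2C\varepsilon D_0\}$, both properties hold once $C\,T^{\kappa}(2C\varepsilon D_0)^{p-1}\le \tfrac12$, i.e. $T^{\kappa}\lesssim \varepsilon^{-(p-1)}$, which is $T\lesssim \varepsilon^{-(p-1)/\kappa}=\varepsilon^{-1/\omega}$ since $\kappa=(p-1)\omega$ gives $(p-1)/\kappa = 1/\omega$. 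Choosing $T = c\,\varepsilon^{-1/\omega}$ with $c$ small then yields a unique fixed point, that is, an $H^s\cap L^r$-mild solution on $[0,T]$, whence $T_r(\varepsilon)\gtrsim\varepsilon^{-1/\omega}$ for $\varepsilon\le\varepsilon_1$. The step I expect to be most delicate is the bookkeeping around the derivative loss: one must check that the Gagliardo--Nirenberg and fractional-chain-rule estimates for $\mathcal{N}(u)$ are compatible with every weight in $X_T$ so that each component closes at the common power $\kappa$, and that the exponentially damped high-frequency $H^\beta_r$ and $H^s$ contributions are genuinely finite under the hypotheses $[s]\le p_0$, $r\ge\frac{2(n-1)}{n+1}$, and the upper bound on $p$ carried over from Theorem~\ref{thm_lwp}.
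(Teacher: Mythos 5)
Your overall strategy coincides with the paper's: both run a contraction argument for the Duhamel map on $[0,T]$ in a time-weighted norm built from Theorem \ref{thm_lplq}, identify the growth factor $\langle T\rangle^{1-\frac{n}{2r}(p-1)}=\langle T\rangle^{(p-1)\omega}$ in the nonlinear estimate (this is exactly the subcritical case of Lemma \ref{lem_duh}, and your computation $a+b=\frac{n(p-1)}{2r}<1$ is the same arithmetic), and then balance it against $\varepsilon^{p-1}$ to obtain $T\lesssim\varepsilon^{-1/\omega}$; the paper formalizes this balance as condition \eqref{t_ep}.

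The one step that fails as written is the Lipschitz bound $\|\Phi(u)-\Phi(v)\|_{X_T}\le C\,T^{\kappa}(\cdots)\|u-v\|_{X_T}$ in the \emph{full} norm $X_T$, which you claim follows from \eqref{N} with $l=0$ and H\"older. The $l=0$ case of \eqref{N} only yields $|\mathcal{N}(u)-\mathcal{N}(v)|\le C|u-v|(|u|+|v|)^{p-1}$, i.e.\ Lebesgue-norm control of the difference of nonlinearities; it gives no handle on the component $\langle t\rangle^{\frac{n}{2}(\frac1r-\frac12)+\frac{s}{2}}\||\nabla|^s(\Phi(u)-\Phi(v))\|_{L^2}$ of your norm, since the high-frequency part of \eqref{lplq} would require an estimate of $\||\nabla|^{s-1}\chi_{>1}(\nabla)(\mathcal{N}(u)-\mathcal{N}(v))\|_{L^2}$, and no such difference estimate with fractional derivatives is available from the regularity assumed on $\mathcal{N}$ when $s>0$ and $p$ is close to $1$. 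This is precisely why the paper does \emph{not} contract in $X(T)$: it contracts on the closed ball $X(T,M(\varepsilon))$ with respect to the weaker metric $d(u,v)=\|u-v\|_{Z(T)}$, $Z(T)=L^{\infty}(0,T;L^2(\mathbb{R}^n))$, using Lemma \ref{lem_close} to ensure the ball is complete for this metric, and the difference estimate \eqref{psi_dif} is carried out only in $L^2$. If you replace your Lipschitz step by this weak-metric device (and add the plain weighted $L^2$ component to your norm, which the interpolation to the various $L^{p\gamma}$ norms in the nonlinear estimate in any case requires), the rest of your argument closes and reproduces the paper's proof.
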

We prove Theorem \ref{thm_lb} in Section 6.
The proof is a slight modification of Theorem \ref{thm_lwp}.

The rate $-1/\omega$ of $\varepsilon$ in the estimate \eqref{lb} is optimal
in the sense that we cannot obtain the estimate
$T_r(\varepsilon) \gtrsim \varepsilon^{-1/\omega -\delta}$
for any $\delta > 0$ in general.
More precisely, we give the following upper estimate of $T_2(\varepsilon)$
(see also Remark \ref{rem_ls}).

\begin{theorem}[Upper bound of the lifespan for $H^s$-mild solution]\label{thm_ub}
In addition to the assumption of Theorem \ref{thm_lwp},
we assume that
$\mathcal{N}(u) = \pm |u|^p$
with
$p< 1+\frac{2r}{n}$.
Then, for any $\delta > 0$,
there exist initial data
$(u_0, u_1) \in (H^s(\mathbb{R}^n) \cap H_r^{\beta}(\mathbb{R}^n))
\times (H^{s-1}(\mathbb{R}^n) \cap L^r(\mathbb{R}^n))$
and a constant
$\varepsilon_2 = \varepsilon_2(n,p,r,s,\delta)>0$
such that for any
$\varepsilon \in (0,\varepsilon_2]$,
the lifespan of the $H^s$-mild solution defined by \eqref{t2} is estimated as
\begin{align*}%
	T_2(\varepsilon) \lesssim \varepsilon^{-1/\omega - \delta},
\end{align*}%
where
the implicit constant is
dependent on $\delta$
but independent of
$\varepsilon$.
\end{theorem}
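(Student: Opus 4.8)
The plan is to use the rescaled space-time test-function method, exploiting the fact that the slow-decay data can be taken barely inside $L^r$; the loss $\delta$ in the exponent will be produced exactly by this "barely in $L^r$" choice. First I would reduce to the focusing-type sign: since replacing $u$ by $-u$ turns the equation with $\mathcal{N}(u)=-|u|^p$ into the one with $+|u|^p$ (and flips the sign of the data), it suffices to treat $\mathcal{N}(u)=|u|^p$ together with nonnegative data. Next I would record that an $H^s$-mild solution on $[0,T_2(\varepsilon))$ is a weak solution: for every $\phi\in C_c^\infty([0,T_2)\times\mathbb{R}^n)$ one has
\[
\int_0^{T}\!\!\int_{\mathbb{R}^n}|u|^p\phi\,dx\,dt+\varepsilon\int_{\mathbb{R}^n}(u_0+u_1)\phi(0)\,dx-\varepsilon\int_{\mathbb{R}^n}u_0\,\partial_t\phi(0)\,dx=\int_0^{T}\!\!\int_{\mathbb{R}^n}u\,(\partial_t^2\phi-\Delta\phi-\partial_t\phi)\,dx\,dt .
\]
Justifying this identity for the low-regularity mild solution (so that $|u|^p$ is locally integrable and the integrations by parts are legitimate) is the one genuinely technical point; local integrability of $|u|^p$ follows from the subcritical Sobolev embedding $H^s\hookrightarrow L^p_{\mathrm{loc}}$ guaranteed by the hypotheses of Theorem \ref{thm_lwp}, and the identity itself is obtained by a standard mollification/approximation of the Duhamel formula.

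I would then fix cut-offs $\varphi\in C_c^\infty$ with $\varphi\equiv1$ on $B_1$, $\supp\varphi\subset B_2$, and $\eta\in C^\infty$ with $\eta\equiv1$ near $0$ and $\supp\eta\subset[0,1]$, and set $p'=p/(p-1)$, $\phi(t,x)=\eta(t/T)^{2p'}\varphi(x/R)^{2p'}$. The flatness of $\eta$ near $0$ gives $\partial_t\phi(0)\equiv0$, so the third boundary term drops out. Choosing the data $u_0,u_1\ge0$ with $u_0+u_1\gtrsim\langle x\rangle^{-n/r-\kappa}$ for a small $\kappa>0$ keeps $(u_0,u_1)\in(H^s\cap H_r^\beta)\times(H^{s-1}\cap L^r)$ (membership in $L^r$ uses $(n/r+\kappa)r>n$, and in $L^2$ uses $r\le2$), while
\[
\varepsilon\int_{\mathbb{R}^n}(u_0+u_1)\phi(0)\,dx\;\gtrsim\;\varepsilon\int_{|x|\le R}\langle x\rangle^{-n/r-\kappa}\,dx\;\gtrsim\;\varepsilon\,R^{\,n-n/r-\kappa},
\]
the exponent being positive precisely because $r>1$. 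Applying Hölder and Young's inequality to the right-hand side absorbs $\int\!\!\int|u|^p\phi$ and leaves
\[
\varepsilon\,R^{\,n-n/r-\kappa}\;\lesssim\;\int_0^{T}\!\!\int_{\mathbb{R}^n}\frac{|\partial_t^2\phi-\Delta\phi-\partial_t\phi|^{p'}}{\phi^{\,p'-1}}\,dx\,dt\;\lesssim\;T^{1-p'}R^{n}+T\,R^{\,n-2p'},
\]
where the power $2p'$ makes every quotient bounded and supported in the scaling regions $\{t\sim T\}$ and $\{|x|\sim R\}$ (the $\partial_t^2\phi$ contribution being subdominant).

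Finally I would optimize in $R$. Balancing the two terms on the right gives $R=\sqrt{T}$, which is the parabolic scaling reflecting the heat-like (diffusive) behavior of the damped wave established by Theorems \ref{thm_lplq}–\ref{thm_lplqdiff}. Substituting $R=\sqrt T$ yields
\[
\varepsilon\;\lesssim\;T^{\,1-p'+\frac{n}{2r}+\frac{\kappa}{2}}\;=\;T^{-\omega+\frac{\kappa}{2}},\qquad \omega=\frac1{p-1}-\frac{n}{2r}>0,
\]
valid for every $T<T_2(\varepsilon)$. Since the exponent is negative for small $\kappa$, letting $T\uparrow T_2(\varepsilon)$ forces $T_2(\varepsilon)\lesssim\varepsilon^{-1/(\omega-\kappa/2)}$. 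Because $1/(\omega-\kappa/2)\to1/\omega$ as $\kappa\to0$, for any prescribed $\delta>0$ I choose $\kappa$ small enough that $1/(\omega-\kappa/2)\le 1/\omega+\delta$, obtaining $T_2(\varepsilon)\lesssim\varepsilon^{-1/\omega-\delta}$ for all small $\varepsilon$, which is the claim. The main obstacle is the first step—rigorously passing from the mild formulation to the weak identity for an $H^s$-solution with possibly small $s$—whereas the rescaled test-function estimates, the optimization in $R$, and the tracking of the $\kappa$-to-$\delta$ dependence are routine once that identity is in hand.
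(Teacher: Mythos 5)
Your argument is correct, and the exponent bookkeeping checks out: with $R=\sqrt{T}$ the right-hand side scales like $T^{1-p'+n/2}$, the data term like $\varepsilon T^{n/2-\frac{n}{2r}-\frac{\kappa}{2}}$, and $1-p'=-\frac{1}{p-1}$ gives $\varepsilon\lesssim T^{-\omega+\kappa/2}$, whence the $\delta$-loss by sending $\kappa\to0$. However, your route is genuinely different from the paper's. The paper does not use a space--time test function at all: it invokes an ordinary differential inequality for the purely spatial weighted average $I_\phi(t)=\int u(t,x)\phi^l(x)\,dx$ (Proposition \ref{prop_odi}, imported from \cite{FuIkeWa_pre}), verifies the hypotheses \eqref{eq:2} for $\phi=\psi_{R(\varepsilon)}$ with data pinched between multiples of $|x|^{-k}$, $k\in(\frac{n}{r},\min\{n,\frac{2}{p-1}\})$, and reads off $T_2(\varepsilon)\le\mu_0^{-1}\varepsilon^{-1/(\frac{1}{p-1}-\frac{k}{2})}$; letting $k\downarrow n/r$ produces the same $\delta$-loss that your $\kappa$ produces. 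What the paper's ODI approach buys is strictly more than nonexistence: it yields the quantitative lower bound \eqref{bl-rt} with the explicit rate $(1-\mu_0\varepsilon^{\cdots}t)^{-2/(p-1)}$ for the weighted average, and it requires an assumption on $u_1$ (to get $I'_{\psi_{R}}(0)>0$) rather than only on $u_0+u_1$. What your Mitidieri--Pohozaev--Zhang-type argument buys is self-containedness (no appeal to the external blow-up proposition) and a slightly cleaner optimization. Both proofs stand or fall on the same technical point you correctly flag as the main obstacle --- upgrading the $H^s$-mild solution to a distributional solution that can be paired with test functions; the paper handles this in the paragraph preceding Proposition \ref{prop_odi} by observing that the Duhamel pieces satisfy the equation in $H^{\min\{s-2,\,-n(1/\sigma_2-1/2)\}}(\mathbb{R}^n)$, and you would need to supply an equivalent justification (your proposed mollification of the Duhamel formula is the right idea, but it is the one step you have not actually carried out).
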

We will prove more general blow-up results in Section 7.
The proof is based on the argument of \cite{FuIkeWa_pre},
in which the blow-up of solutions to the semilinear wave equation
with time-dependent damping was studied via
an analysis of ordinary differential inequality.

\begin{remark}\label{rem_ls}
By the definitions of lifespans of $H^s$-mild solution \eqref{t2}
and $H^s\cap L^r$-mild solution \eqref{tr},
we immediately have
$T_r(\varepsilon) \le T_2(\varepsilon)$.
From this and Theorems \ref{thm_lb} and \ref{thm_ub},
we see that
\begin{align*}%
	\varepsilon^{-1/\omega}
	\lesssim T_r(\varepsilon) \le T_2(\varepsilon)
	\lesssim \varepsilon^{-1/\omega - \delta},
\end{align*}%
which gives an almost optimal estimate for both $T_r(\varepsilon)$ and $T_2(\varepsilon)$.
\end{remark}

The rest of the paper is organized as follows.
In Section 2, we prove our $L^p$-$L^q$ estimates.
Theorems \ref{thm_lwp} and \ref{thm_gwp}, namely,
the local and global existence of $H^s\cap L^r$-mild solution are proved in Section 3.
Then, Section 4 is devoted to the proof of Theorem \ref{thm_gwp2},
that is, the global existence of
$H^s$-mild solution.
In Section 5, we give a proof of Theorem \ref{thm_ab}.
Finally, in Sections 6 and 7, we give proofs of Theorems \ref{thm_lb} and \ref{thm_ub},
respectively.

We introduce notations used throughout this paper.
For the variable $x =(x_1, \dots, x_n) \in \mathbb{R}^n$,
we use the notation of derivatives $\partial_j = \frac{\partial}{\partial x_j}\ (j=1,\ldots,n)$.
Let $\bm{1}_I$ be the characteristic function of $I \subset \mathbb{R}$.
The notation $X \sim Y$ stands for $X \lesssim Y$ and $Y \lesssim X$.

Let $\chi \in C_0^{\infty} (\mathbb{R})$
be a cut-off function satisfying $\chi (r)=1$ for $|r| \le 1$ and $\chi (r) =0$ for $|r| \ge 2$.
We write
\begin{align}
\label{chi}
	\chi_{<a} (r) := \chi \left( \frac{r}{a} \right),\ 
	\chi_{\ge a} (r) := 1 - \chi_{<a} (r),\ 
	\chi_{a \le \cdot <b} (r) := \chi_{<b} (r) - \chi_{<a} (r)
\end{align}
for $0<a<b$.

For a function $f : \mathbb{R}^n \to \mathbb{C}$,
we define the Fourier transform and the inverse Fourier transform by
\begin{align*}
	\mathcal{F} [f] (\xi) = \hat{f} (\xi) = (2\pi)^{-n/2}
		\int_{\mathbb{R}^n} e^{-i x \xi} f(x) \,dx,\quad
	\mathcal{F}^{-1} [f] (x) = (2\pi)^{-n/2}
		\int_{\mathbb{R}^n} e^{i x \xi} f(\xi) \,d\xi.
\end{align*}
Moreover, for a measurable function $m = m(\xi)$, we denote
the Fourier multiplier $m(\nabla)$ by
\begin{align*}
	m(\nabla) f (x) = \mathcal{F}^{-1} \left[ m(\xi) \hat{f}(\xi) \right] (x).
\end{align*}

For $s \in \mathbb{R}$ and $p \in (1,\infty)$,
we denote the usual Sobolev space by $H^s_p(\mathbb{R}^n)$
and its homogeneous version by $\dot{H}^s_p(\mathbb{R}^n)$.

\section{Proof of the $L^p$-$L^q$ estimates}

We divide $\mathcal{D}(t)$ into low and high frequency parts $\mathcal{D}(t) = \mathcal{D}_1(t) + \mathcal{D}_2(t)$, where
\[
\mathcal{D}_1(t) = \mathcal{F}^{-1} \left[ \chi_{<1} (|\xi|) e^{-\frac{t}{2}} L(t,\xi) \mathcal{F} \right], \quad
\mathcal{D}_2(t) = \mathcal{F}^{-1} \left[ \chi_{\ge 1} (|\xi|) e^{-\frac{t}{2}} L(t,\xi) \mathcal{F} \right].
\]
Let $\mathfrak{d}$ be the multiplier of $\mathcal{D}_1$, namely
\[
\mathfrak{d} (t,x) := \mathcal{F}^{-1} \left[ \chi_{<1} (|\xi|) e^{-\frac{t}{2}} L(t,\xi) \right] (x).
\]

\subsection{$L^p$-$L^q$ estimates for linear damped wave equation} \label{sec_lplq}
First, we focus on the low frequency part.
The $L^p$-$L^q$ estimates of the low frequency part is similar to that of the heat propagator. 
The first step is to get the pointwise estimate for the kernel $\mathfrak{d}$, 
which gives the value of the $L^r$-norm of the kernel $\mathfrak{d}$. 
The second step is to get the $L^p$-$L^q$ estimates 
whose proof is based on Young's inequality and the value of the $L^r$-norm of the kernel $\mathfrak{d}$. 

We have the following pointwise estimate of the kernel $\mathfrak{d}$. 
\begin{proposition} \label{prop_pointwisem1}
For $s \ge 0$, we have
\begin{equation}\label{eq2.1}
| |\nabla|^s \mathfrak{d} (t,x)|
\lesssim \min \left( |x|^{-1}, \langle t \rangle^{-\frac{1}{2}} \right)^{s+n}.
\end{equation}
Moreover, for any $j \in \mathbb{N}$,
\begin{equation}\label{eq2.2}
|\mathfrak{d} (t,x)|
\lesssim \langle t \rangle^{-\frac{n}{2}} \min \left( \langle t \rangle^{\frac{1}{2}} |x|^{-1}, 1 \right)^j.
\end{equation}
\end{proposition}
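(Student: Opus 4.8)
The plan is to reduce the whole proposition to a single $t$-independent estimate for a parabolically rescaled symbol, after first discarding the exponentially small contributions. Note that $L(t,\xi)=\sinh(t\sqrt{1/4-|\xi|^2})/\sqrt{1/4-|\xi|^2}$ is smooth in $\xi$ (the case split in \eqref{L} only records the sign of $1/4-|\xi|^2$), so I first split the cut-off as $\chi_{<1}=\chi_{<1/4}+(\chi_{<1}-\chi_{<1/4})$. On the support of $\chi_{<1}-\chi_{<1/4}$, that is $1/4\le|\xi|\le 2$, one has $e^{-t/2}L(t,\xi)=O(e^{-ct})$ together with all its $\xi$-derivatives (for $1/4\le|\xi|<1/2$ because $\sqrt{1/4-|\xi|^2}-\tfrac12$ is bounded away from $0$, and for $|\xi|\ge1/2$ because of the prefactor $e^{-t/2}$). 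Since this piece is a smooth, compactly supported symbol, its inverse Fourier transform, even after applying $|\nabla|^s$ (a smooth multiplier on this annulus), is bounded by $C_N e^{-ct}\langle x\rangle^{-N}$ for every $N$, which is dominated by the right-hand sides of both \eqref{eq2.1} and \eqref{eq2.2}. Hence it suffices to treat the piece localized to $|\xi|\le 1/4$.

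Next I isolate the leading symbol there. Writing $\phi(\xi)=\sqrt{1/4-|\xi|^2}$ and $\psi(\xi)=\phi(\xi)-\tfrac12=-|\xi|^2/(\tfrac12+\phi(\xi))$, on $|\xi|\le 1/4$ one has
\[
e^{-t/2}L(t,\xi)=\frac{1}{2\phi(\xi)}e^{t\psi(\xi)}-\frac{1}{2\phi(\xi)}e^{-t(\phi(\xi)+1/2)},
\]
where the second term again carries $e^{-ct}$ and is negligible, so the model symbol is $a(t,\xi):=\chi_{<1/4}(|\xi|)\,\tfrac{1}{2\phi(\xi)}e^{t\psi(\xi)}$. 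Using $\nabla\psi=-\xi/\phi$ and $\psi\le -c|\xi|^2$, an induction yields the symbol-type bounds
\[
|\partial_\xi^\alpha a(t,\xi)|\lesssim_\alpha \langle t\rangle^{|\alpha|/2}e^{-ct|\xi|^2}\qquad(|\xi|\le 1/4),
\]
the gain $\langle t\rangle^{1/2}$ per derivative coming from $|t\xi|\,e^{-ct|\xi|^2}\lesssim \langle t\rangle^{1/2}e^{-c't|\xi|^2}$. These are exactly the bounds that become uniform under the rescaling below.

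For $t\ge 1$ substitute $\xi=\langle t\rangle^{-1/2}\eta$ and $x=\langle t\rangle^{1/2}y$. Then $|\nabla|^s$ applied to the $|\xi|\le 1/4$ piece equals
\[
\langle t\rangle^{-\frac{s+n}{2}}\int_{\mathbb{R}^n} e^{iy\cdot\eta}|\eta|^s \tilde a(t,\eta)\,d\eta,\qquad \tilde a(t,\eta):=a(t,\langle t\rangle^{-1/2}\eta),
\]
and the previous symbol bounds give $|\partial_\eta^\alpha \tilde a(t,\eta)|\lesssim_\alpha e^{-c|\eta|^2}$ uniformly in $t\ge 1$. The matter thus reduces to the $t$-free estimate $|\int e^{iy\cdot\eta}|\eta|^s b(\eta)\,d\eta|\lesssim \min(1,|y|^{-1})^{s+n}$ for symbols $b$ with $|\partial^\alpha b|\lesssim_\alpha e^{-c|\eta|^2}$. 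For $|y|\le 1$ this is the trivial $L^1_\eta$ bound; for $|y|\ge 1$ I decompose dyadically in $|\eta|\sim\rho$, using the trivial bound $\rho^{s+n}$ on shells with $\rho\lesssim|y|^{-1}$ and integrating by parts $N>s+n$ times (distributing derivatives between $|\eta|^s$ and $b$) to gain $(|y|\rho)^{-N}$ on shells with $\rho\gtrsim|y|^{-1}$, both geometric sums being dominated by the scale $\rho\sim|y|^{-1}$ and producing $|y|^{-(s+n)}$. Undoing the scaling turns $\langle t\rangle^{-(s+n)/2}\min(1,|y|^{-1})^{s+n}$ into $\min(|x|^{-1},\langle t\rangle^{-1/2})^{s+n}$, which is \eqref{eq2.1}; the range $0<t\le 1$ is the same fixed-symbol estimate without rescaling. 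For \eqref{eq2.2} one takes $s=0$, so that $b$ is smooth and $j$-fold integration by parts directly gives $\langle y\rangle^{-j}$ uniformly, and unscaling yields $\langle t\rangle^{-n/2}\min(\langle t\rangle^{1/2}|x|^{-1},1)^j$.

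The delicate point, and the main obstacle, is the uniformity of the constants in $t$: this is precisely why the parabolic rescaling $\xi=\langle t\rangle^{-1/2}\eta$ is chosen, so that the $\langle t\rangle^{|\alpha|/2}$ growth of $\partial_\xi^\alpha a$ is exactly absorbed and $\tilde a$ satisfies $t$-independent symbol bounds; verifying these bounds, and that the moving cut-off $\chi_{<1/4}(\langle t\rangle^{-1/2}|\eta|)$, supported in $|\eta|\lesssim \langle t\rangle^{1/2}$, does no harm because the factor $e^{-c|\eta|^2}$ already forces $|\eta|\lesssim 1$, is the technical heart. The second subtlety, relevant only to \eqref{eq2.1}, is the non-smoothness of $|\eta|^s$ at the origin, which obstructs naive integration by parts and forces the dyadic balancing that pins the tail at exactly the homogeneity rate $|y|^{-(s+n)}$.
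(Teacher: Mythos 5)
Your argument is correct, but it is organized quite differently from the paper's. The paper never rescales: it proves the $\langle t\rangle^{-(s+n)/2}$ bound directly from Lemma \ref{lem_lowexpint}, and obtains the $|x|^{-(s+n)}$ bound by splitting the low-frequency symbol at the moving threshold $|\xi|\sim |x|^{-1}$ (the pieces $\mathfrak{d}_1'$, $\mathfrak{d}_1''$) and integrating by parts $[s]+n+1$ times in $\xi_1$, using the explicit derivative formula of Lemma \ref{lem_derivk} to get $|e^{-t/2}\partial_1^kL|\lesssim\sum_l\langle t\rangle^l|\xi|^{2l-k}e^{-t|\xi|^2}$ and then Lemma \ref{lem_lowexpint} again to balance the resulting integrals. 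Your parabolic rescaling $\xi=\langle t\rangle^{-1/2}\eta$, $x=\langle t\rangle^{1/2}y$ packages the same $\langle t\rangle^{1/2}$-per-derivative gain into uniform symbol bounds $|\partial^\alpha\tilde a|\lesssim e^{-c|\eta|^2}$ and reduces everything to one $t$-free oscillatory-integral estimate, with the split at $|\xi|\sim|x|^{-1}$ reappearing as your dyadic balancing at $\rho\sim|y|^{-1}$; your frequency cut at $1/4$ rather than the paper's $1/8$ is immaterial. The two routes are mechanically equivalent but your version is more conceptual and arguably shorter. What the paper's heavier bookkeeping buys is reuse: the explicit constants $C^{(k)}_{l,m}$ of Lemma \ref{lem_derivk} are matched against the $D^{(k)}_{l,m}$ of Lemma \ref{lem_derivkg} to exhibit the cancellation of the leading term in Lemma \ref{mulpoint}, which drives the improved difference estimate of Proposition \ref{prop_pointwisediff}; your soft symbol-class argument would have to be rerun with the subtracted heat symbol and an extra two orders of vanishing to recover that. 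One small point to tidy up: you dismiss the $e^{-t(\phi+1/2)}/(2\phi)$ term on $|\xi|\le1/4$ as negligible, but for non-integer $s$ the factor $|\xi|^s$ still obstructs naive integration by parts there; the clean fix is to note that this term satisfies the same symbol bounds $|\partial^\alpha|\lesssim\langle t\rangle^{|\alpha|/2}e^{-ct|\xi|^2}$ as your model symbol (since $e^{-ct}\le e^{-ct|\xi|^2}$ on the support) and simply keep it inside $a(t,\xi)$.
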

For the proof of Proposition \ref{prop_pointwisem1}, we observe the following lemmas.
\begin{lemma} \label{lem_lowexpint}
For $t \ge 0$, $a \in \mathbb{R}$, and $\sigma \in (0,\frac{1}{2})$, we have
\begin{equation*}
\int_{\sigma \le |\xi| \le \frac{1}{2}} |\xi|^a e^{-t |\xi|^2} d\xi
\lesssim
\begin{cases}
\langle t \rangle^{-\frac{a+n}{2}}, & \text{if } a>-n, \\
\langle \log (\langle t \rangle^{\frac{1}{2}} \sigma) \rangle, & \text{if } a=-n, \\
\sigma^{a+n}, & \text{if } a<-n.
\end{cases}
\end{equation*}
Moreover, $\sigma =0$ is allowed if $a>-n$.
\end{lemma}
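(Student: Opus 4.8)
The plan is to reduce the $n$-dimensional integral to a one-dimensional radial integral and then rescale the radial variable so that the Gaussian weight becomes independent of $t$. First I would dispose of the range $0 \le t \le 1$. There $\langle t \rangle \sim 1$ and $e^{-t|\xi|^2} \le 1$, so the integral is dominated by $\int_{\sigma \le |\xi| \le 1/2} |\xi|^a \, d\xi$, which in polar coordinates is a constant times $\int_\sigma^{1/2} \rho^{a+n-1} \, d\rho$; evaluating this elementary integral gives exactly the three claimed bounds (a constant if $a > -n$, $\log(1/\sigma)$ if $a=-n$, and $\sigma^{a+n}$ if $a<-n$), each comparable to the right-hand side since $\langle t\rangle^{1/2}\sim 1$. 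Thus it remains to treat $t \ge 1$, where $\langle t\rangle \sim t$.

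For $t \ge 1$, passing to polar coordinates gives
\begin{align*}
\int_{\sigma \le |\xi| \le \frac12} |\xi|^a e^{-t|\xi|^2}\, d\xi
= C_n \int_\sigma^{1/2} \rho^{a+n-1} e^{-t\rho^2}\, d\rho.
\end{align*}
Substituting $\eta = t^{1/2}\rho$ turns this into
\begin{align*}
C_n\, t^{-\frac{a+n}{2}} \int_{t^{1/2}\sigma}^{t^{1/2}/2} \eta^{a+n-1} e^{-\eta^2}\, d\eta,
\end{align*}
and since $t \sim \langle t\rangle$ the prefactor is comparable to $\langle t\rangle^{-(a+n)/2}$. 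The task is therefore reduced to estimating the truncated integral $I(\sigma,t) := \int_{t^{1/2}\sigma}^{t^{1/2}/2}\eta^{a+n-1}e^{-\eta^2}\,d\eta$, and the three cases are governed by the integrability of $\eta^{a+n-1}e^{-\eta^2}$ near $\eta=0$.

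If $a > -n$, the exponent $a+n-1 > -1$ makes the integrand integrable at the origin, so $I(\sigma,t) \le \int_0^\infty \eta^{a+n-1}e^{-\eta^2}\,d\eta < \infty$ (a Gamma-function value); this bounds the integral by a constant times $\langle t\rangle^{-(a+n)/2}$, and since convergence holds down to $\eta=0$ the value $\sigma=0$ is admissible. If $a < -n$, then $a+n<0$ and I would bound $e^{-\eta^2}\le 1$ and integrate the pure power from the lower endpoint to $\infty$, giving $I(\sigma,t)\lesssim (t^{1/2}\sigma)^{a+n}$; multiplying by the prefactor $t^{-(a+n)/2}$ yields exactly $\sigma^{a+n}$.

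The borderline case $a=-n$ is the only delicate point, since then $a+n-1=-1$ and $\int \eta^{-1}e^{-\eta^2}\,d\eta$ diverges logarithmically at $0$. Here the prefactor is $t^0=1$, and I would split the integral at $\eta=1$: the part over $[1,t^{1/2}/2]$ is bounded by $\int_1^\infty \eta^{-1}e^{-\eta^2}\,d\eta<\infty$, while over $[t^{1/2}\sigma, 1]$ (when $t^{1/2}\sigma<1$) bounding $e^{-\eta^2}\le 1$ gives $\log(1/(t^{1/2}\sigma))$; when $t^{1/2}\sigma \ge 1$ the whole integral is $O(1)$. Together these give $I(\sigma,t)\lesssim \langle \log(t^{1/2}\sigma)\rangle \sim \langle \log(\langle t\rangle^{1/2}\sigma)\rangle$, which is the asserted bound. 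The main care throughout is the consistent replacement of $t$ by $\langle t\rangle$ and the handling of this logarithmic endpoint.
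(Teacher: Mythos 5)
Your argument is correct and follows essentially the same route as the paper's: rescale the frequency variable by $\sqrt{t}$ (the paper uses $\langle t \rangle^{\frac{1}{2}}$ uniformly instead of treating $0\le t\le 1$ separately), then estimate the resulting truncated integral by splitting at a fixed radius and distinguishing the three cases according to whether the power $|\eta|^{a+n}$ is integrable at the origin. The passage to polar coordinates and the separate handling of small times are only cosmetic differences.
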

\begin{proof}
By changing variable $\eta = \langle t \rangle^{\frac{1}{2}} \xi$, we have
\begin{align*}
\int_{\sigma \le |\xi| \le \frac{1}{2}} |\xi|^a e^{-t |\xi|^2} d\xi
= \langle t \rangle^{-\frac{a+n}{2}} \int_{\langle t \rangle^{\frac{1}{2}} \sigma \le |\eta| \le \frac{1}{2} \langle t \rangle^{\frac{1}{2}}} |\eta|^a e^{-\frac{t}{\langle t \rangle} |\eta|^2} d\eta.
\end{align*}
When $\langle t \rangle^{\frac{1}{2}} \sigma \ge \frac{1}{2}$, the integral on the right hand side is bounded by
\[
\int_{\frac{1}{2} \le |\eta| \le \frac{1}{2} \langle t \rangle^{\frac{1}{2}}} |\eta|^a e^{-\frac{t}{\langle t \rangle} |\eta|^2} d\eta
\le 
\begin{cases}
\displaystyle
\int_{\frac{1}{2} \le |\eta| \le 2} |\eta|^a d\eta
\lesssim 1, & \text{if } 0 \le t \le 1, \\[15pt]
\displaystyle
\int_{|\eta| \ge \frac{1}{2}} |\eta|^a e^{-\frac{1}{2} |\eta|^2} d\eta
\lesssim 1, & \text{if } t \ge 1.
\end{cases}
\]
For $\langle t \rangle^{\frac{1}{2}} \sigma \le \frac{1}{2}$, we have
\begin{align*}
	\int_{\langle t \rangle^{\frac{1}{2}}
		\sigma \le |\eta| \le \frac{1}{2} \langle t \rangle^{\frac{1}{2}}}
		|\eta|^a e^{-\frac{t}{\langle t \rangle} |\eta|^2} d\eta
	& \le \int_{\langle t \rangle^{\frac{1}{2}} \sigma \le |\eta| \le \frac{1}{2}}
		|\eta|^a d\eta +
		\int_{\frac{1}{2} \le |\eta| \le \frac{1}{2} \langle t \rangle^{\frac{1}{2}}}
		|\eta|^a e^{-\frac{t}{\langle t \rangle} |\eta|^2} d\eta \\
& \lesssim
\begin{cases}
1, & \text{if } a>-n, \\
\langle \log (\langle t \rangle^{\frac{1}{2}} \sigma) \rangle, & \text{if } a=-n, \\
\langle t \rangle^{\frac{a+n}{2}} \sigma^{a+n}, & \text{if } a<-n,
\end{cases}
\end{align*}
which concludes the proof.
\end{proof}

\begin{lemma} \label{lem_derivk}
For $k \in \mathbb{Z}_{\ge 0}$,
there exist some constants $C_{l,m}^{(k)}$ $(k-[\frac{k}{2}] \le l \le k$, $0 \le m \le l)$
satisfying
\begin{align}
\partial_1^{k} \left( \frac{e^{t \sqrt{\frac{1}{4}-|\xi|^2}}}{\sqrt{\frac{1}{4}-|\xi|^2}} \right)
= e^{t \sqrt{\frac{1}{4}-|\xi|^2}} \sum_{l=k-[\frac{k}{2}]}^k \sum_{m=0}^l C_{l,m}^{(k)} t^m \xi_1^{2l-k} (\tfrac{1}{4}-|\xi|^2)^{-l+\frac{m-1}{2}}
\label{derivk}
\end{align}
for $t \in \mathbb{R}$, where $\partial_1=\partial/\partial \xi_{1}$.
\end{lemma}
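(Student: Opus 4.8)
The plan is to argue by induction on $k$, differentiating the explicit sum once and re-indexing. It is convenient to abbreviate $g(\xi) := (\tfrac14 - |\xi|^2)^{1/2}$, so that $\partial_1 g = -\xi_1 g^{-1}$ and, since $(\tfrac14-|\xi|^2)^{-l+\frac{m-1}{2}} = g^{\,m-2l-1}$, the claimed identity \eqref{derivk} reads $\partial_1^k(e^{tg}/g) = e^{tg}\sum_{l,m} C^{(k)}_{l,m}\, t^m \xi_1^{2l-k} g^{\,m-2l-1}$. The base case $k=0$ holds with the single coefficient $C^{(0)}_{0,0}=1$: the index ranges force $l=m=0$, and the right-hand side is then $e^{tg}g^{-1}$, as required.

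For the inductive step I would apply $\partial_1$ to a generic summand $e^{tg}\, t^m \xi_1^{2l-k} g^{\,m-2l-1}$. By the product rule this yields three terms, from differentiating $e^{tg}$ (contributing $t\,\partial_1 g = -t\xi_1 g^{-1}$), the power $\xi_1^{2l-k}$ (contributing $(2l-k)\xi_1^{-1}$), and the power $g^{\,m-2l-1}$ (contributing $(m-2l-1)g^{-1}\partial_1 g = (2l-m+1)\xi_1 g^{-2}$). A direct inspection shows each term again has the shape $t^{m'}\xi_1^{2l'-(k+1)}g^{\,m'-2l'-1}$, with $(l',m')=(l+1,m+1)$ for the first, $(l',m')=(l,m)$ for the second, and $(l',m')=(l+1,m)$ for the third. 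Summing over $l,m$ and collecting terms with the same $(l',m')$ then defines the constants $C^{(k+1)}_{l',m'}$ and exhibits the desired form at order $k+1$.

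The step I expect to require the most care is verifying that the index ranges $k-[\tfrac k2]\le l\le k$ and $0\le m\le l$ are reproduced exactly as $(k+1)-[\tfrac{k+1}2]\le l'\le k+1$ and $0\le m'\le l'$. The upper bound $l'\le k+1$ and the inequalities $0\le m'\le l'$ are immediate from the three shifts above. The delicate point is the lower bound on $l'$, which hinges on the parity identity $[\tfrac{k+1}2]=[\tfrac k2]$ for $k$ even and $[\tfrac k2]+1$ for $k$ odd. For $k$ odd the reindexings land exactly on $l'\ge k-[\tfrac k2]=(k+1)-[\tfrac{k+1}2]$. For $k$ even, the second term would a priori produce $l'=l=k-[\tfrac k2]=k/2$, which is one below the required bound; however, at that boundary value its coefficient $(2l-k)$ vanishes, so the offending summand drops out and the effective lower bound becomes $l'=k/2+1=(k+1)-[\tfrac{k+1}2]$. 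This cancellation is precisely what makes the stated range sharp, and confirming it is the crux of the argument.
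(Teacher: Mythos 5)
Your proof is correct and follows essentially the same route as the paper: induction on $k$, applying $\partial_1$ to a generic summand to produce the three shifted terms $(l+1,m+1)$, $(l,m)$, $(l+1,m)$, which yields exactly the paper's recurrence $C^{(k+1)}_{l,m} = -C^{(k)}_{l-1,m-1} + (2l-k)C^{(k)}_{l,m} + (2l-m-1)C^{(k)}_{l-1,m}$. Your explicit verification that the coefficient $(2l-k)$ vanishes at $l=k/2$ for even $k$ is the same cancellation the paper uses implicitly when it raises the lower summation limit of the middle sum to $k+1-[\frac{k+1}{2}]$; you simply make that step more visible.
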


\begin{proof}
For $k=0$, we have $C^{(0)}_{0,0}=1$.
We assume that \eqref{derivk} holds for some $k \in \mathbb{Z}_{\ge 0}$.
For simplicity, we define $C^{(k)}_{l,m} =0$ for $(l,m) \notin \{ (l,m) \in \mathbb{Z}_{\ge 0}^2 \colon  k-[\frac{k}{2}] \le l \le k, 0 \le m \le l \}$.
Then, a direct calculation yields
\begin{align*}
& e^{-t \sqrt{\frac{1}{4}-|\xi|^2}} \partial_1^{k+1} \left( \frac{e^{t \sqrt{\frac{1}{4}-|\xi|^2}}}{\sqrt{\frac{1}{4}-|\xi|^2}} \right) \\
&= e^{-t \sqrt{\frac{1}{4}-|\xi|^2}} \partial_1 \left\{ e^{t \sqrt{\frac{1}{4}-|\xi|^2}} \sum_{l=k-[\frac{k}{2}]}^k \sum_{m=0}^l C^{(k)}_{l,m} t^m \xi_1^{2l-k} (\tfrac{1}{4}-|\xi|^2)^{-l+\frac{m-1}{2}} \right\} \\
& = \sum_{l=k-[\frac{k}{2}]}^k \sum_{m=0}^l C^{(k)}_{l,m} \left\{ -t^{m+1} \xi_1^{2l-k+1} (\tfrac{1}{4}-|\xi|^2)^{-l+\frac{m-2}{2}} \right.\\
& \quad \left. + (2l-k) t^m \xi_1^{2l-k-1} (\tfrac{1}{4}-|\xi|^2)^{-l+\frac{m-1}{2}} + (2l-m+1) t^m \xi_1^{2l-k+1} (\tfrac{1}{4}-|\xi|^2)^{-l+\frac{m-3}{2}} \right\} \\
&= \sum_{l=k+1-[\frac{k}{2}]}^{k+1} \sum_{m=1}^l (-1) C^{(k)}_{l-1,m-1} t^{m} \xi_1^{2l-(k+1)} (\tfrac{1}{4}-|\xi|^2)^{-l+\frac{m-1}{2}} \\
& \quad + \sum_{l=k+1-[\frac{k+1}{2}]}^{k} \sum_{m=0}^l (2l-k) C^{(k)}_{l,m} t^{m} \xi_1^{2l-(k+1)} (\tfrac{1}{4}-|\xi|^2)^{-l+\frac{m-1}{2}} \\
& \quad + \sum_{l=k+1-[\frac{k}{2}]}^{k+1} \sum_{m=0}^{l-1} (2l-m-1) C^{(k)}_{l-1,m} t^{m} \xi_1^{2l-(k+1)} (\tfrac{1}{4}-|\xi|^2)^{-l+\frac{m-1}{2}} \\
&= \sum_{l=k+1-[\frac{k+1}{2}]}^{k+1} \sum_{m=0}^l \left\{ -C^{(k)}_{l-1,m-1} + (2l-k) C^{(k)}_{l,m} + (2l-m-1) C^{(k)}_{l-1,m} \right\} \\
& \hspace*{90pt} \times t^{m} \xi_1^{2l-(k+1)} (\tfrac{1}{4}-|\xi|^2)^{-l+\frac{m-1}{2}}.
\end{align*}
Hence, the constants $C^{(k+1)}_{l,m}$ are defined by
\begin{equation} \label{recurrenceC}
C^{(k+1)}_{l,m} := -C^{(k)}_{l-1,m-1} + (2l-k) C^{(k)}_{l,m} + (2l-m-1) C^{(k)}_{l-1,m},
\end{equation}
which shows \eqref{derivk}.
\end{proof}

\begin{proof}[Proof of Proposition \ref{prop_pointwisem1}]
We prove the inequality with respect to the right side in the minimum in \eqref{eq2.1} and \eqref{eq2.2}, \textit{i.e.}
\begin{align*}
	| |\nabla|^s \mathfrak{d} (t,x)| \lesssim\langle t \rangle^{-\frac{s+n}{2}}
\end{align*}
for any $s \geq 0$.
Since
\begin{equation} \label{rationalize}
-\frac{1}{2}+\sqrt{\frac{1}{4}-|\xi|^2} = -\frac{|\xi|^2}{\frac{1}{2}+\sqrt{\frac{1}{4}-|\xi|^2}} \le -|\xi|^2
\end{equation}
for $|\xi| \le \frac{1}{2}$, Lemma \ref{lem_lowexpint} gives
\begin{align}
\label{d(t)est0}
	||\nabla|^s \mathfrak{d} (t,x)|
	&= (2\pi)^{-\frac{n}{2}} \left| \int_{\mathbb{R}^n} e^{ix\xi}
		\chi_{<1}(|\xi|) |\xi|^s e^{-\frac{t}{2}} L(t,\xi) d\xi \right| \\
\nonumber
	&\lesssim \int_{|\xi|<\frac{1}{2}-\delta} |\xi|^s e^{-t|\xi|^2} d\xi 
		+ \int_{\frac{1}{2}-\delta \le |\xi| \le 2} t 	e^{-\frac{t}{4}} d\xi\\
\nonumber
	&\lesssim \langle t \rangle^{-\frac{s+n}{2}},
\end{align}
where $\delta>0$ is a sufficiently small number.

It remains to prove the inequality with respect to the left side in the minimum in \eqref{eq2.1}
and \eqref{eq2.2}.
To obtain the decay with respect to $|x|$,
we divide $\mathfrak{d}$ into two parts
$\mathfrak{d} = \mathfrak{d}_1 + \mathfrak{d}_2$:
\begin{align*}
	& \mathfrak{d}_1 (t,x)
		:= \mathcal{F}^{-1} \left[ \chi_{<\frac{1}{8}} (|\xi|) e^{-\frac{t}{2}} L(t,\xi) \right] (x), \\
	& \mathfrak{d}_2 (t,x)
		:= \mathcal{F}^{-1} \left[ \chi_{\frac{1}{8} \le \cdot < 1} (|\xi|) e^{-\frac{t}{2}} L(t,\xi) \right] (x).
\end{align*}
Without loss of generality, we may assume that $|x| \sim |x_1|$.
First, we prove the estimate for $\mathfrak{d}_{1}$
with respect to the left side in the minimum in \eqref{eq2.2}.
Namely, we show
\begin{align*}
	|\mathfrak{d}_1 (t,x)|
	\lesssim \langle t \rangle^{-\frac{n}{2}+\frac{j}{2}} |x|^{-j},
\end{align*}
 for any $j \in \mathbb{N}$.

By \eqref{rationalize} and Lemma \ref{lem_derivk},
for $k \in \mathbb{Z}_{\ge 0}$ and $|\xi| \le \frac{1}{4}$, we have
\begin{equation} \label{lowderivL}
\begin{aligned}
\left| e^{-\frac{t}{2}} \partial_1^k L(t,\xi) \right|
	& \le \frac{1}{2} e^{t \left( -\frac{1}{2}+\sqrt{\frac{1}{4}-|\xi|^2} \right)}
		\sum_{l=k-[\frac{k}{2}]}^k
		\sum_{m=0}^l |C_{l,m}^{(k)}| t^m |\xi|^{2l-k} (\tfrac{1}{4}-|\xi|^2)^{-l+\frac{m-1}{2}} \\
	& \quad + \frac{1}{2} e^{t \left( -\frac{1}{2} -\sqrt{\frac{1}{4}-|\xi|^2} \right)}
		\sum_{l=k-[\frac{k}{2}]}^k
		\sum_{m=0}^l |C_{l,m}^{(k)}| t^m |\xi|^{2l-k} (\tfrac{1}{4}-|\xi|^2)^{-l+\frac{m-1}{2}} \\
	& \lesssim \sum_{l=k-[\frac{k}{2}]}^k \langle t \rangle^l |\xi|^{2l-k} e^{-t|\xi|^2}.
\end{aligned}
\end{equation}
Since
$\left| \partial_1^k \left( \chi_{<\frac{1}{8}} (|\xi|) \right) \right| \lesssim 1$
for
$k \in \mathbb{Z}_{\ge 0}$,
integration by parts, \eqref{lowderivL}, and Lemma \ref{lem_lowexpint} yield
\begin{equation} \label{d(t)est1}
\begin{aligned}
	& | \mathfrak{d}_1 (t,x)| \\
	&= (2\pi)^{-\frac{n}{2}} \left| \int_{\mathbb{R}^n}
		e^{ix\xi} \chi_{<\frac{1}{8}} (|\xi|) e^{-\frac{t}{2}} L(t,\xi) d\xi \right| \\
	&= (2\pi)^{-\frac{n}{2}} \frac{1}{|x_1|^j} \left| \sum_{k=0}^j
		\binom{j}{k} \int_{\mathbb{R}^n} e^{ix\xi}
		\partial_1^{j-k} \left( \chi_{<\frac{1}{8}} (|\xi|) \right) e^{-\frac{t}{2}}
			\partial_1^k L(t,\xi) d\xi \right| \\
	& \lesssim |x|^{-j} \sum_{k=0}^j
		\sum_{l=k-[\frac{k}{2}]}^k \langle t \rangle^l
			\int_{|\xi| \le \frac{1}{4}} |\xi|^{2l-k} e^{-t |\xi|^2} d\xi \\
	& \lesssim |x|^{-j} \langle t \rangle^{\frac{j}{2}-\frac{n}{2}}
\end{aligned}
\end{equation}
for any $j \in \mathbb{N}$.

Secondly, we show the inequality for $\mathfrak{d}_{1}$
with respect to the left side in the minimum in \eqref{eq2.1} \textit{i.e.}
$| |\nabla|^s \mathfrak{d}_{1} (t,x)| \lesssim |x|^{-(s+n)}$.
For
$s \ge 0$, 
we assume
$|x| \ge \langle t \rangle^{\frac{1}{2}}$,
otherwise the desired bound follows from \eqref{d(t)est0}.
Then, we further divide the multiplier
$\mathfrak{d}_1$
into two parts
$\mathfrak{d}_1 = \mathfrak{d}_1' + \mathfrak{d}_1''$:
\begin{align*}
	& \mathfrak{d}_1' (t,x)
		:= \mathcal{F}^{-1} \left[ \chi_{< \frac{1}{8|x|}} (|\xi|) e^{-\frac{t}{2}} L(t,\xi) \right] (x), \\
	& \mathfrak{d}_1'' (t,x)
		:= \mathcal{F}^{-1} \left[ \chi_{\frac{1}{8|x|} \le \cdot <\frac{1}{8}} (|\xi|)
							e^{-\frac{t}{2}} L(t,\xi) \right] (x).
\end{align*}
By \eqref{lowderivL}, we have
\begin{equation} \label{d(t)est1'}
	\begin{aligned}
	| |\nabla|^s \mathfrak{d}_1' (t,x)|
	& = (2\pi)^{-\frac{n}{2}}
		\left| \int_{\mathbb{R}^n} e^{ix\xi}
			\chi_{<\frac{1}{8|x|}} (|\xi|) |\xi|^s e^{-\frac{t}{2}} L(t,\xi) d\xi \right| \\
	& \lesssim \int_{|\xi| \le \frac{1}{4|x|}} |\xi|^s d\xi
	\lesssim |x|^{-s-n}.
\end{aligned}
\end{equation}
We set $j := [s]+n+1$ for simplicity.
Integration by parts $j$-times yields
\begin{align*}
	& | |\nabla|^s \mathfrak{d}_1'' (t,x)| \\
	&= (2\pi)^{-\frac{n}{2}} \left|
		\int_{\mathbb{R}^n} e^{ix\xi} \chi_{\frac{1}{8|x|}\le \cdot <\frac{1}{8}} (|\xi|) |\xi|^s
		e^{-\frac{t}{2}} L(t,\xi) d\xi
		\right| \\
	&= (2\pi)^{-\frac{n}{2}} \frac{1}{|x_1|^j} \left|
		\sum_{k=0}^j \binom{j}{k}
		\int_{\mathbb{R}^n} e^{ix\xi}
		\partial_1^{j-k} \left( \chi_{\frac{1}{8|x|} \le \cdot <\frac{1}{8}} (|\xi|) |\xi|^s \right)
		e^{-\frac{t}{2}} \partial_1^k L(t,\xi) d\xi
		\right|
\end{align*}
Since
$| \partial_1^k ( \chi_{\frac{1}{8|x|} \le \cdot <\frac{1}{8}} (|\xi|) |\xi|^s ) |
\lesssim |\xi|^{s-k}$
for
$k \in \mathbb{Z}_{\ge 0}$,
from \eqref{lowderivL} we further obtain 
\begin{align*}
	| |\nabla|^s \mathfrak{d}_1'' (t,x)|
	\lesssim |x|^{-j} \sum_{k=0}^j \sum_{l=k-[\frac{k}{2}]}^k
	\langle t \rangle^l
	\int_{\frac{1}{8|x|} \le |\xi| \le \frac{1}{4}} |\xi|^{s-j+2l} e^{-t |\xi|^2} d\xi.
\end{align*}
Finally, Lemma \ref{lem_lowexpint} concludes
\begin{align}
\nonumber
	&| |\nabla|^s \mathfrak{d}_1'' (t,x)|\\
\nonumber
	& \lesssim |x|^{-j}
		\sum_{l=0}^j \langle t \rangle^l
			\Big\{ \langle t \rangle^{-l+\frac{j-s-n}{2}} \bm{1}_{(\frac{j-s-n}{2},j]} (l)
			+ |\log (\langle t \rangle^{\frac{1}{2}} |x|^{-1})| \bm{1}_{\{ l=\frac{j-s-n}{2} \}} (l) \\
\label{d(t)est1''}
	& \hspace*{150pt} + |x|^{-2l+j-s-n} \bm{1}_{[0,\frac{j-s-n}{2})}(l) \Big\} \\
\nonumber
	& \lesssim |x|^{-j}
		\langle t \rangle^{\frac{j-s-n}{2}}
		+ |x|^{-j} \langle t \rangle^{\frac{j-s-n}{2}}
		|\log (\langle t \rangle^{\frac{1}{2}} |x|^{-1})|
		+ |x|^{-s-n} \\
\nonumber
	& \lesssim |x|^{-s-n},
\end{align}
provided that $|x| \ge \langle t \rangle^{\frac{1}{2}}$.

At last, we go on to the estimate for $\mathfrak{d}_2$ with respect to the left side in the minimum in \eqref{eq2.1} and \eqref{eq2.2}. 
More precisely, we prove the better estimate 
$| |\nabla|^s \mathfrak{d}_{2} (t,x)| \lesssim  |x|^{-j}$ 
for any $j\in \mathbb{N}$. 
Since $L(t,\xi)$ is smooth with respect to $\xi$,
\[
	|\partial_1^k L(t,\xi)|
	\lesssim \langle t \rangle ^{k+1}
	\left\{ e^{t\sqrt{\frac{1}{4}-|\xi|^2}}
		\bm{1}_{[0,\frac{1}{2}]} (|\xi|) + \bm{1}_{[\frac{1}{2},2]} (|\xi|) \right\}
\]
for $|\xi| \le 2$ and $k \in \mathbb{Z}_{\ge 0}$.
Since
\[
	-\frac{1}{2} + \sqrt{\frac{1}{4}-|\xi|^2} \le \frac{-4+\sqrt{15}}{8} <0
\]
for $\frac{1}{8} \le |\xi| \le \frac{1}{2}$, we have
\[
	e^{-\frac{t}{2}} \int_{\frac{1}{8} \le |\xi| \le \frac{1}{2}} \left| \partial_1^k L(t,\xi) \right| d\xi
	\lesssim \langle t \rangle^{k+1} e^{ \frac{-4+\sqrt{15}}{8} t}.
\]
Hence, integration by parts yields
\begin{equation} \label{d(t)est2}
	\begin{aligned}
	& | |\nabla|^s \mathfrak{d}_2 (t,x)| \\
	&= (2\pi)^{-\frac{n}{2}}
	\left| \int_{\mathbb{R}^n} e^{ix\xi}
		\chi_{\frac{1}{8} \le \cdot < 1} (|\xi|) |\xi|^s e^{-\frac{t}{2}} L(t,\xi) d\xi \right| \\
	&= (2\pi)^{-\frac{n}{2}} \frac{1}{|x_1|^j}
		\left| \sum_{k=0}^j \binom{j}{k}
		\int_{\mathbb{R}^n} e^{ix\xi} \partial_1^{j-k}
		\left( \chi_{\frac{1}{8} \le \cdot < 1} (|\xi|) |\xi|^s \right) 
			e^{-\frac{t}{2}} \partial_1^k L(t,\xi) d\xi \right| \\
	& \lesssim |x|^{-j} \langle t \rangle^{j+1} e^{\frac{-4+\sqrt{15}}{8} t}
	\end{aligned}
\end{equation}
for any $j \in \mathbb{N}$.
Combining \eqref{d(t)est0} and \eqref{d(t)est1}--\eqref{d(t)est2}, we obtain the desired bound.
\end{proof}

\begin{remark} \label{remarkd112}
We divide $\mathfrak{d}_1$ into two parts $\mathfrak{d}_1 = \mathfrak{d}_1^{(1)} - \mathfrak{d}_1^{(2)}$:
\begin{align*}
& \mathfrak{d}_1^{(1)} (t,x) := \mathcal{F}^{-1} \bigg[ \chi_{<1} (|\xi|) \frac{e^{t \left( -\frac{1}{2}+\sqrt{\frac{1}{4}-|\xi|^2} \right)}}{2 \sqrt{\frac{1}{4}-|\xi|^2}} \bigg] (x), \\
& \mathfrak{d}_1^{(2)} (t,x) := \mathcal{F}^{-1} \bigg[ \chi_{<1} (|\xi|) \frac{e^{t \left( -\frac{1}{2}-\sqrt{\frac{1}{4}-|\xi|^2} \right)}}{2 \sqrt{\frac{1}{4}-|\xi|^2}} \bigg] (x).
\end{align*}
In \eqref{lowderivL}, we neglect the decay factor $e^{-\frac{t}{2}}$ of $\mathfrak{d}_1^{(2)}$.
By keeping this factor, the proof of Proposition \ref{prop_pointwisem1} yields that for any $s \ge 0$ and $j \in \mathbb{Z}_{\ge 0}$, we have
\begin{align*}
& | |\nabla|^s \mathfrak{d}_1^{(2)} (t,x)|
\lesssim \min \left( |x|^{-1}, \langle t \rangle^{-\frac{1}{2}} \right)^{s+n} e^{-\frac{t}{2}}, \\
& | \mathfrak{d}_1^{(2)} (t,x)|
\lesssim \langle t \rangle^{-\frac{n}{2}} \min \left( \langle t \rangle^{\frac{1}{2}} |x|^{-1}, 1 \right)^j  e^{-\frac{t}{2}}.
\end{align*}
\end{remark}

Proposition \ref{prop_pointwisem1}
and Young's inequality lead to the following linear estimate for the low frequency.

\begin{proposition} \label{prop_lpboundlow}
Let $1\le q \le p \le \infty$ and $s_1 \ge s_2 \ge 0$.
Then, 
\[
\| |\nabla|^{s_1} \mathcal{D}_1(t) g \|_{L^p}
\lesssim \langle t \rangle^{-\frac{n}{2}\left( \frac{1}{q} - \frac{1}{p} \right) -\frac{s_1-s_2}{2}} \| |\nabla|^{s_2} g \|_{L^q}
\]
for $|\nabla|^{s_2} g \in L^q(\mathbb{R}^n)$.
\end{proposition}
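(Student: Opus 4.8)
The plan is to realize $|\nabla|^{s_1}\mathcal{D}_1(t)$ as convolution against the kernel $|\nabla|^{s_1-s_2}\mathfrak{d}(t)$ and then apply Young's inequality, reducing the whole statement to an $L^\rho$ bound on the kernel that comes from the pointwise estimates of Proposition \ref{prop_pointwisem1}. Concretely, since the Fourier multiplier of $|\nabla|^{s_1}\mathcal{D}_1(t)$ factors as $|\xi|^{s_1-s_2}\chi_{<1}(|\xi|)e^{-t/2}L(t,\xi)\cdot|\xi|^{s_2}$, writing $h:=|\nabla|^{s_2}g\in L^q(\mathbb{R}^n)$ one has the identity
\begin{equation*}
|\nabla|^{s_1}\mathcal{D}_1(t)g = \big(|\nabla|^{s}\mathfrak{d}(t)\big)*h,\qquad s:=s_1-s_2\ge 0,
\end{equation*}
which is legitimate because $\chi_{<1}(|\xi|)e^{-t/2}L(t,\xi)$ is smooth and compactly supported in $\xi$, so $|\nabla|^s\mathfrak{d}(t)$ is a genuine integrable kernel.

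Next I would fix the Young exponent $\rho\in[1,\infty]$ by $\frac1\rho=1-\big(\frac1q-\frac1p\big)$; the hypothesis $q\le p$ guarantees $\rho\ge 1$, and $\frac1q\le 1$ guarantees $\frac1\rho\ge 0$, so $\rho$ is admissible. Young's inequality then gives
\begin{equation*}
\big\||\nabla|^{s_1}\mathcal{D}_1(t)g\big\|_{L^p}\le \big\||\nabla|^{s}\mathfrak{d}(t)\big\|_{L^\rho}\,\big\||\nabla|^{s_2}g\big\|_{L^q},
\end{equation*}
so everything reduces to proving $\big\||\nabla|^s\mathfrak{d}(t)\big\|_{L^\rho}\lesssim\langle t\rangle^{-\frac n2(\frac1q-\frac1p)-\frac s2}$.

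For the kernel bound I would insert the pointwise estimate \eqref{eq2.1}, raise it to the $\rho$-th power, and split the integral at $|x|=\langle t\rangle^{1/2}$. On the inner region the minimum equals $\langle t\rangle^{-1/2}$, contributing $\langle t\rangle^{-(s+n)\rho/2}$ times the ball volume $\langle t\rangle^{n/2}$; on the outer region the minimum equals $|x|^{-1}$, and $\int_{|x|>\langle t\rangle^{1/2}}|x|^{-(s+n)\rho}\,dx\sim\langle t\rangle^{(n-(s+n)\rho)/2}$ whenever $(s+n)\rho>n$. Both regions yield $\langle t\rangle^{(n-(s+n)\rho)/2}$, and taking the $\rho$-th root produces exactly $\langle t\rangle^{\frac{n}{2\rho}-\frac{s+n}2}=\langle t\rangle^{-\frac n2(1-\frac1\rho)-\frac s2}$, which is the claimed rate since $1-\frac1\rho=\frac1q-\frac1p$. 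The case $\rho=\infty$ (that is, $q=1$ and $p=\infty$) is even simpler, as \eqref{eq2.1} gives $\big\||\nabla|^s\mathfrak{d}(t)\big\|_{L^\infty}\lesssim\langle t\rangle^{-(s+n)/2}$ directly.

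The main obstacle is the single endpoint $\rho=1$ with $s=0$, i.e.\ $p=q$ and $s_1=s_2$, where $(s+n)\rho=n$ and the outer integral $\int_{|x|>\langle t\rangle^{1/2}}|x|^{-n}\,dx$ diverges logarithmically, so \eqref{eq2.1} alone is insufficient. Here I would instead invoke the faster decay \eqref{eq2.2}: choosing any integer $j>n$, the bound $|\mathfrak{d}(t,x)|\lesssim\langle t\rangle^{-n/2}\min(\langle t\rangle^{1/2}|x|^{-1},1)^j$ makes the outer integral converge, and the same splitting gives $\big\|\mathfrak{d}(t)\big\|_{L^1}\lesssim\langle t\rangle^{-n/2}\cdot\langle t\rangle^{n/2}=1$, which matches the required rate $\langle t\rangle^{0}$ when $p=q$ and $s_1=s_2$. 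Collecting these cases completes the bound on the kernel and hence the proposition.
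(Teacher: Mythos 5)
Your proposal is correct and follows essentially the same route as the paper: reduce via Young's inequality with $\tfrac1\rho=\tfrac1p-\tfrac1q+1$ to the bound $\||\nabla|^s\mathfrak{d}(t)\|_{L^\rho}\lesssim\langle t\rangle^{-\frac n2(1-\frac1\rho)-\frac s2}$, prove it by splitting at $|x|=\langle t\rangle^{1/2}$ using the pointwise estimate \eqref{eq2.1}, and invoke the extra decay \eqref{eq2.2} to handle $s=0$ (the paper uses $j=n+1$ there for all $\rho$, you only at the endpoint $\rho=1$ --- an immaterial difference).
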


\begin{proof}
It reduces to show the bound
\begin{equation} \label{d(t)lpbound}
\| |\nabla|^s \mathfrak{d} (t) \|_{L^p}
\lesssim \langle t \rangle^{-\frac{n}{2} \left( 1-\frac{1}{p} \right)-\frac{s}{2}}
\end{equation}
for $s \ge 0$ and $1 \le p \le \infty$.
Indeed, \eqref{d(t)lpbound} and Young's inequality show
\begin{align*}
\| |\nabla|^{s_1} \mathcal{D}_1(t) g \|_{L^p}
& = \| (|\nabla|^{s_1-s_2} \mathfrak{d}(t)) \ast |\nabla|^{s_2} g \|_{L^p}
\le \| |\nabla|^{s_1-s_2} \mathfrak{d}(t) \|_{L^r} \| |\nabla|^{s_2} g \|_{L^q} \\
& \lesssim \langle t \rangle^{-\frac{n}{2} (1-\frac{1}{r}) -\frac{s_1-s_2}{2}} \| |\nabla|^{s_2} g \|_{L^q}
\sim \langle t \rangle^{-\frac{n}{2} (\frac{1}{q}-\frac{1}{p}) -\frac{s_1-s_2}{2}} \| |\nabla|^{s_2} g \|_{L^q}
\end{align*}
with $\frac{1}{r} = \frac{1}{p}-\frac{1}{q}+1$.

It remains to show \eqref{d(t)lpbound}.
The case $p=\infty$ is a direct consequence of Proposition \ref{prop_pointwisem1}.
For $s>0$ and $1 \le p <\infty$, Proposition \ref{prop_pointwisem1} implies
\begin{align*}
\| |\nabla|^s \mathfrak{d} (t) \|_{L^p}
& \lesssim \langle t \rangle^{-\frac{s+n}{2}} \left( \int_{|x| < \langle t \rangle^{\frac{1}{2}}} dx \right)^{\frac{1}{p}} + \left( \int_{|x| \ge \langle t \rangle^{\frac{1}{2}}} |x|^{-p(s+n)} dx \right)^{\frac{1}{p}} \\
& \lesssim \langle t \rangle^{-\frac{n}{2} \left( 1-\frac{1}{p} \right)-\frac{s}{2}}.
\end{align*}
On the other hand, for $s=0$, Proposition \ref{prop_pointwisem1} with $j=n+1$ implies
\begin{align*}
\| \mathfrak{d} (t) \|_{L^p}
& \lesssim \langle t \rangle^{-\frac{n}{2}} \left( \int_{|x| < \langle t \rangle^{\frac{1}{2}}} dx \right)^{\frac{1}{p}} + \langle t \rangle^{\frac{1}{2}} \left( \int_{|x| \ge \langle t \rangle^{\frac{1}{2}}} |x|^{-p(n+1)} dx \right)^{\frac{1}{p}} \\
& \lesssim \langle t \rangle^{-\frac{n}{2} \left( 1-\frac{1}{p} \right)}.
\end{align*}
This finishes the proof.
\end{proof}

Next, we consider the high frequency part of $\mathcal{D}(t)$.
To estimate the high frequency part, we reduce the high frequency part to the wave propagator
by using Mikhlin's multiplier theorem and apply the $L^p$-estimate of the wave propagator.

\begin{proposition} \label{prop_lpboundhigh}
Let $1<p<\infty$ and
$\beta = (n-1) | \frac{1}{2} - \frac{1}{p} |$.
Then,
there exists $\delta_p > 0$ such that
\[
	\| \mathcal{D}_2(t) g \|_{L^p}
	\lesssim e^{-\frac{t}{2}} \langle t \rangle^{\delta_p} \| g \|_{H^{\beta-1}_p}
\]
for $g \in H^{\beta-1}_p (\mathbb{R}^n)$ and $t \ge 0$.
\end{proposition}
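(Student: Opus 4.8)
The plan is to reduce the high-frequency damped-wave multiplier to the pure wave propagator $\mathcal{W}(t)$ so that the known $L^p$-estimate \eqref{w_lp} applies directly, and to control the discrepancy by a Mikhlin-type multiplier argument. First I would observe that for $|\xi| \ge 1$ the symbol is in the oscillatory regime $|\xi| > 1/2$, so $e^{-t/2}L(t,\xi) = e^{-t/2} \frac{\sin(t\sqrt{|\xi|^2 - 1/4})}{\sqrt{|\xi|^2 - 1/4}}$. The decay factor $e^{-t/2}$ is already explicit and will be pulled out front; the remaining task is to estimate the $L^p$-norm of the operator with symbol $\chi_{\ge 1}(|\xi|)\frac{\sin(t\sqrt{|\xi|^2-1/4})}{\sqrt{|\xi|^2-1/4}}$ acting on $g$, and compare it to $\mathcal{W}(t)$ whose symbol is $\frac{\sin(t|\xi|)}{|\xi|}$.

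The key step is to write $\mathcal{D}_2(t)$ as a composition $e^{-t/2} m_t(\nabla)\,\mathcal{W}(t)\,\langle\nabla\rangle^{\beta-1}$, or more precisely to factor the symbol as
\begin{align*}
	\chi_{\ge 1}(|\xi|) \frac{\sin(t\sqrt{|\xi|^2-1/4})}{\sqrt{|\xi|^2-1/4}}
	= m_t(\xi)\cdot \frac{\sin(t|\xi|)}{|\xi|},
\end{align*}
where the quotient multiplier $m_t(\xi)$ absorbs both the shift $\sqrt{|\xi|^2-1/4}$ versus $|\xi|$ inside the sine and the amplitude ratio. I would then verify that $m_t(\xi)$ (together with its phase-corrected version, since the sine of a shifted argument is not literally a multiple of $\sin(t|\xi|)$) satisfies Mikhlin-Hörmander bounds $|\partial_\xi^\alpha m_t(\xi)| \lesssim \langle t\rangle^{|\alpha|}|\xi|^{-|\alpha|}$ uniformly on $|\xi|\ge 1$, for $|\alpha| \le [n/2]+1$. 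The powers of $\langle t\rangle$ collected from differentiating the oscillatory phase are exactly what produce the polynomial loss $\langle t\rangle^{\delta_p}$ in the statement. Because the phase $\sqrt{|\xi|^2-1/4} = |\xi|\sqrt{1-1/(4|\xi|^2)}$ differs from $|\xi|$ by a bounded smooth symbol on $|\xi|\ge 1$, the difference of phases $t(\sqrt{|\xi|^2-1/4}-|\xi|)$ is bounded and its derivatives in $\xi$ gain polynomial factors of $t$; these are harmless after the Mikhlin theorem since $L^p$-boundedness of the multiplier operator is what matters, and the norm grows at most polynomially in $t$.

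With the factorization in hand, the estimate chains together as
\begin{align*}
	\| \mathcal{D}_2(t) g\|_{L^p}
	\lesssim e^{-t/2} \| m_t(\nabla)\|_{L^p\to L^p}\, \| \mathcal{W}(t) \langle\nabla\rangle^{\beta-1} g\|_{L^p}
	\lesssim e^{-t/2}\langle t\rangle^{\delta_p}\|g\|_{H^{\beta-1}_p},
\end{align*}
where the first inequality uses the Mikhlin multiplier theorem to bound $m_t(\nabla)$ on $L^p$ with operator norm $O(\langle t\rangle^{\delta_p})$, and the second uses the wave estimate \eqref{w_lp} applied to $\langle\nabla\rangle^{\beta-1}g$ (noting that on the high-frequency support $\langle\nabla\rangle^{\beta-1}$ and $|\nabla|^{\beta-1}$ are comparable as multipliers). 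I expect the main obstacle to be the careful bookkeeping in the Mikhlin verification: the symbol $m_t$ must be decomposed so that the oscillatory part is isolated, and one must check that the polynomial-in-$t$ growth from differentiating the phase is compatible with the Mikhlin condition uniformly in $t$, rather than producing an exponential or a $t$-dependence that degrades the decay $e^{-t/2}$. A clean way to handle this is to further cut off dyadically in $t|\xi|$ or to introduce an auxiliary multiplier capturing $e^{it(\sqrt{|\xi|^2-1/4}-|\xi|)}$, whose derivatives are controlled because the phase difference is a smooth bounded symbol for $|\xi|\ge 1$.
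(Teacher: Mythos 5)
Your plan is essentially the paper's proof: after pulling out $e^{-t/2}$, the paper writes $\sin(t\sqrt{|\xi|^2-1/4})$ as a combination of exponentials, factors $e^{it\sqrt{-\Delta-1/4}}=e^{it|\nabla|}e^{it(\sqrt{-\Delta-1/4}-|\nabla|)}$, verifies exactly your Mikhlin bound $|\partial^\alpha e^{it(\sqrt{|\xi|^2-1/4}-|\xi|)}|\lesssim \langle t\rangle^{|\alpha|}|\xi|^{-|\alpha|}$ on the high-frequency support, and then invokes the Miyachi--Peral--Sj\"ostrand $L^p$ bound for $e^{it|\nabla|}$. The one caution is that your first-stated factorization through $\sin(t|\xi|)/|\xi|$ is not literally usable (the quotient blows up at the zeros of $\sin(t|\xi|)$), but the ``auxiliary multiplier capturing $e^{it(\sqrt{|\xi|^2-1/4}-|\xi|)}$'' that you propose at the end is precisely the correct fix and is what the paper does.
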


This proposition follows from the $L^p$ bound for linear wave solutions,
which was proved by Sj\"{o}strand \cite{Sj70}
and improved by Miyachi \cite{Mi80} 
and Peral \cite{Pe80}.

\begin{theorem}[$L^p$ estimates for the wave equation] \label{thm_lpboundwave}
Let $1<p<\infty$.
Then, for any
$\beta \ge (n-1) | \frac{1}{2} - \frac{1}{p} |$,
there exists $\delta_p' > 0$ such that
\[
\| e^{it |\nabla|} g \|_{L^p} \lesssim \langle t \rangle^{\delta_p'} \| g \|_{H^{\beta}_p}
\]
for $g \in H^{\beta}_p (\mathbb{R}^n)$ and $t \in \mathbb{R}$.
\end{theorem}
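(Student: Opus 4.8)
The plan is to reduce the inequality to the $L^p\to L^p$ boundedness of a Fourier multiplier and then prove that bound by Stein's analytic interpolation; this is the route of the cited works, which I outline. Setting $m_t(\xi)=e^{it|\xi|}\langle\xi\rangle^{-\beta}$ we have $e^{it|\nabla|}g=m_t(\nabla)\langle\nabla\rangle^{\beta}g$, so it is enough to show $\|m_t(\nabla)\|_{L^p\to L^p}\lesssim\langle t\rangle^{\delta_p'}$. I would split $m_t$ with a smooth cutoff into a low-frequency piece supported in $|\xi|\lesssim 1$ and a high-frequency piece supported in $|\xi|\gtrsim 1$. The low-frequency piece, on which $\langle\xi\rangle^{-\beta}\sim 1$, carries the conical singularity of $|\xi|$ at the origin but no curvature difficulty; a dyadic Littlewood--Paley decomposition together with rescaling of each annulus reduces it to fixed-annulus estimates and produces at most polynomial growth in $t$. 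The essential content is the high-frequency piece.

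For the high-frequency piece I would first extract the time dependence by scaling. Since the $L^p\to L^p$ multiplier norm is dilation invariant and $\chi_{\ge 1}(\xi)e^{it|\xi|}|\xi|^{-\beta}$ equals $t^{\beta}$ times $\chi_{\ge t}(\eta)e^{i|\eta|}|\eta|^{-\beta}$ evaluated at $\eta=t\xi$ (for $t\ge 1$), the high-frequency operator has $L^p$-norm equal to $t^{\beta}\|\chi_{\ge t}(\nabla)e^{i|\nabla|}|\nabla|^{-\beta}\|_{L^p\to L^p}$. By dilation invariance of the cutoff, the Miyachi--Peral bound for $e^{i|\nabla|}$ holds uniformly once the frequency is bounded below by $1$, so this last norm is bounded uniformly in $t\ge 1$; this accounts for the factor $\langle t\rangle^{\delta_p'}$ and reduces everything to the single time-independent inequality $\|\chi_{\ge 1}(\nabla)e^{i|\nabla|}|\nabla|^{-\beta}\|_{L^p\to L^p}<\infty$ with $\beta=(n-1)|\frac{1}{2}-\frac{1}{p}|$.

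This last bound I would prove by Stein's interpolation applied to the analytic family $T_z=\chi_{\ge 1}(\nabla)e^{i|\nabla|}|\nabla|^{-z}$. On the line $\mathrm{Re}\,z=0$ the symbol has modulus $\sim 1$, so $T_{iy}$ is $L^2$-bounded uniformly in $y$ by Plancherel. The opposite endpoint sits on $\mathrm{Re}\,z=\frac{n-1}{2}$, where one needs the bound from the Hardy space $H^1$ into $L^1$ (the dual $L^\infty$--$\mathrm{BMO}$ statement then covers $p\ge 2$); interpolating order $0$ at $L^2$ against order $\frac{n-1}{2}$ at this endpoint gives exactly the admissible order $(n-1)|\frac{1}{2}-\frac{1}{p}|$. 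The main obstacle is this endpoint estimate: one must show the kernel $K(x)=\int_{\mathbb{R}^n}e^{i(x\cdot\xi+|\xi|)}\chi_{\ge 1}(\xi)|\xi|^{-(n-1)/2}\,d\xi$ obeys a H\"ormander cancellation condition. Stationary phase for the phase $x\cdot\xi+|\xi|$ shows the critical set is $x=-\xi/|\xi|$, so $K$ concentrates on the sphere $|x|=1$; the $(n-1)$ nonvanishing principal curvatures there supply a decay factor $|\xi|^{-(n-1)/2}$ that precisely compensates the smoothing order $\frac{n-1}{2}$ and renders $K$ integrable away from $|x|=1$. Executing this stationary-phase analysis uniformly and controlling the kernel near the singular set $|x|=1$ is the delicate heart of the argument.
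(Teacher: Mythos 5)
The paper does not prove this statement at all: Theorem \ref{thm_lpboundwave} is imported as a known result, with the proof delegated to the citations of Sj\"ostrand, Miyachi and Peral. So there is no internal argument to compare against; what you have written is a reconstruction of the proof in those references, and your reductions are the standard ones and are sound. In particular the scaling identity $\|\chi_{\ge 1}(\nabla)e^{it|\nabla|}|\nabla|^{-\beta}\|_{L^p\to L^p}=t^{\beta}\|\chi_{\ge t}(\nabla)e^{i|\nabla|}|\nabla|^{-\beta}\|_{L^p\to L^p}$ is correct for the dilation-covariant cutoffs used here, the comparison $\chi_{\ge t}=\chi_{\ge t}\chi_{\ge 1}$ for $t\ge 2$ legitimately freezes the time dependence into the harmless factor $t^{\beta}\le\langle t\rangle^{\delta_p'}$, the low-frequency piece is indeed routine (on $|\xi|\lesssim 1$ one even gets away with Mikhlin at the cost of $\langle t\rangle^{[n/2]+1}$, which is exactly how the paper treats the analogous correction multiplier in the proof of Proposition \ref{prop_lpboundhigh}), and the interpolation bookkeeping $\theta\cdot\frac{n-1}{2}=(n-1)(\frac1p-\frac12)$ at $\frac1p=\frac{1+\theta}{2}$ reproduces the right order.

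The genuine gap is that the one step carrying all the content of the theorem --- the endpoint bound for $T_{(n-1)/2+iy}=\chi_{\ge 1}(\nabla)e^{i|\nabla|}|\nabla|^{-(n-1)/2-iy}$ from $H^1$ to $L^1$ (equivalently $L^\infty\to\mathrm{BMO}$), with admissible growth in $y$ --- is described rather than proved. Your heuristic that the curvature of the sphere produces a factor $|\xi|^{-(n-1)/2}$ is the right intuition, but a dyadic stationary-phase computation gives $|K_j(x)|\lesssim 2^{j}(1+2^{j}\bigl||x|-1\bigr|)^{-N}$ for the frequency-$2^j$ piece, whose $L^1$ norms are $O(1)$ and do not sum: the kernel is \emph{not} integrable, and the theorem cannot be obtained from a pointwise kernel bound plus H\"ormander's condition away from the singular set alone. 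One must run the atomic decomposition and exploit cancellation of atoms against the oscillation of $K$ near $|x|=1$ (and track the polynomial dependence on $y$ from the extra multiplier $|\xi|^{-iy}$), which is precisely the content of Miyachi's and Peral's papers. As a blind proof the submission is therefore an accurate roadmap with the destination still cited rather than reached --- which, to be fair, is also exactly the status of this theorem inside the paper itself.
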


\begin{proof}[Proof of Proposition \ref{prop_lpboundhigh}]
Since we focus on the high frequency and $\sin (t \sqrt{|\xi|^2-1/4}) = \frac{1}{2i} ( e^{it \sqrt{|\xi|^2-1/4}}-e^{-it\sqrt{|\xi|^2-1/4}})$, it suffices to show that
\begin{equation} \label{lpboundmodwave}
\| e^{it \sqrt{-\Delta-1/4}} \mathcal{F}^{-1} [ \chi_{\ge 1} \mathcal{F} g] \|_{L^p} \lesssim \langle t \rangle^{\delta_p} \| g \|_{H^{\beta}_p}
\end{equation}
for $g \in H^{\beta}_p (\mathbb{R}^n)$.
We note that
\[
e^{it \sqrt{-\Delta-1/4}} = e^{it|\nabla|} e^{it \left( \sqrt{-\Delta-1/4}-|\nabla| \right)}.
\]
Because
\[
\left| \sqrt{|\xi|^2-1/4}-|\xi| \right| = \frac{1}{4(\sqrt{|\xi|^2-1/4}+|\xi|)} \sim |\xi|^{-1},
\]
a simple calculation shows
\[
\left| \partial^{\alpha} e^{it \left( \sqrt{|\xi|^2-1/4}-|\xi| \right)} \right|
\lesssim \langle t \rangle^{|\alpha|} |\xi|^{-|\alpha|}
\]
for $\xi \neq 0$ and $\alpha \in \mathbb{Z}_{\ge 0}^n$.
Hence, Mikhlin's multiplier theorem (see \cite[Theorem 6.2.7]{Gra}) shows that
\[
\| e^{it (\sqrt{-\Delta-1/4}-|\nabla|)} \mathcal{F}^{-1} [ \chi_{\ge 1} \mathcal{F} g] \|_{L^p}
\lesssim \langle t \rangle^{[\frac{n}{2}]+1} \| \mathcal{F}^{-1} [ \chi_{\ge 1} \mathcal{F} g] \|_{L^p}
\lesssim \langle t \rangle^{[\frac{n}{2}]+1} \| g \|_{L^p}.
\]
Owing to Theorem \ref{thm_lpboundwave}, this estimate yields \eqref{lpboundmodwave}.
\end{proof}

The estimate \eqref{lplq} in Theorem \ref{thm_lplq} follows from
Propositions \ref{prop_lpboundlow} and \ref{prop_lpboundhigh}.

\subsection{$L^p$-$L^q$ estimates for the derivative of the solution}

Next, we prove the second statement \eqref{lplqdt} of Theorem \ref{thm_lplq}.
We use the same notation as in Remark \ref{remarkd112}.
We define the Fourier multipliers $\mathcal{D}_1^{(j)}$ by $\mathcal{D}_1^{(j)} g = \mathfrak{d}_1^{(j)} \ast g$ for $j=1,2$.
From \eqref{rationalize}, Mikhlin's multiplier theorem shows that
\begin{align*}
& \| \partial_t \mathcal{D}_1^{(1)} g \|_{L^p}
= \bigg\| \Big( \frac{1}{2} + \sqrt{\frac{1}{4}+\Delta} \Big)^{-1} \Delta \mathcal{D}_1^{(1)} g \bigg\|_{L^p}
\lesssim \| \Delta \mathcal{D}_1^{(1)} g \|_{L^p}, \\
& \| \partial_t \mathcal{D}_1^{(2)} g \|_{L^p}
= \bigg\| \Big( \frac{1}{2} + \sqrt{\frac{1}{4}+\Delta} \Big) \mathcal{D}_1^{(1)} g \bigg\|_{L^p}
\lesssim \| \mathcal{D}_1^{(2)} g \|_{L^p}.
\end{align*}
Therefore, Remark \ref{remarkd112} and Proposition \ref{prop_lpboundlow} imply the desired estimate for the low frequency part $\partial_t \mathcal{D}_1 (t) g$.
Moreover, the same argument as in the proof of Proposition \ref{prop_lpboundhigh} shows the estimate for the high frequency part $\partial_t \mathcal{D}_2 (t) g$.

\subsection{$L^p$-$L^q$ estimates for the difference}

Now, we prove Theorem \ref{thm_lplqdiff}.
Set
\[
\mathfrak{m}(t,x) := \mathfrak{d}(t,x) - \mathcal{F}^{-1} [\chi_{<1}(|\xi|)e^{-t|\xi|^2}](x).
\]
We recall that $\mathfrak{d}$ is the multiplier of the low frequency part of $\mathcal{D}$.

We show the pointwise decay estimates for $\mathfrak{m}$.

\begin{proposition} \label{prop_pointwisediff}
For $s \ge 0$, we have
\begin{equation*}
| |\nabla|^s \mathfrak{m} (t,x) |
\lesssim \min \left( |x|^{-1}, \langle t \rangle^{-\frac{1}{2}} \right)^{s+n+2}.
\end{equation*}
\end{proposition}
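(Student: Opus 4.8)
The plan is to follow the architecture of the proof of Proposition \ref{prop_pointwisem1}, the only new ingredient being that the symbol of $\mathfrak{m}$ gains two extra powers of $|\xi|$, which is exactly what upgrades the exponent from $s+n$ to $s+n+2$. Write $\psi(\xi):=\sqrt{1/4-|\xi|^2}$ and fix a small $\delta>0$. First I would split the frequency region $\{|\xi|<1\}$ into the low part $\{|\xi|\le 1/2-\delta\}$ and the annulus $\{1/2-\delta\le|\xi|<1\}$. On the annulus both summands defining $\mathfrak{m}$ are exponentially small: $e^{-t|\xi|^2}\lesssim e^{-t(1/4-\delta)}$ and $e^{-\frac{t}{2}}L(t,\xi)\lesssim t\,e^{t(\psi-1/2)}\lesssim t\,e^{-ct}$ since $\psi-1/2\le-|\xi|^2\le-(1/2-\delta)^2$; thus this region contributes a term dominated by any polynomial in $\langle t\rangle^{-1}$, handled exactly as $\mathfrak{d}_2$ in \eqref{d(t)est2}. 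On the low part $\psi$ is real and bounded below, so $e^{-\frac t2}L$ splits into the dominant exponential $\tfrac{1}{2\psi}e^{t(\psi-1/2)}$ and the subdominant one $\tfrac{1}{2\psi}e^{t(-\psi-1/2)}$; the latter carries the factor $e^{-t/2}$ together with all its $\xi_1$-derivatives (Remark \ref{remarkd112}), so it too is exponentially small. This reduces everything to comparing the dominant part with the (cut-off) heat symbol $e^{-t|\xi|^2}$.

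The key algebraic observation, valid for $|\xi|\le 1/2-\delta$, is the factorization
\[
\frac{e^{t(\psi-1/2)}}{2\psi}-e^{-t|\xi|^2}
= e^{-t|\xi|^2}\left(\frac{e^{tF}-1}{2\psi}+\frac{1}{2\psi}-1\right),
\qquad F:=\psi-\tfrac12+|\xi|^2 .
\]
Here $F$ is a smooth even function with $F(\xi)=O(|\xi|^4)$, while $\frac{1}{2\psi}-1=O(|\xi|^2)$. Moreover \eqref{rationalize} gives $F\le 0$, whence $e^{tF}\le 1$ and $|e^{tF}-1|\le t|F|\lesssim t|\xi|^4$. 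Combining these yields the pointwise symbol bound $|\widehat{\mathfrak{m}}(t,\xi)|\lesssim(|\xi|^2+t|\xi|^4)e^{-t|\xi|^2}$ on the low region, i.e. two powers of $|\xi|$ better than the bound on $\widehat{\mathfrak{d}}$ used in \eqref{d(t)est0}. Integrating $|\xi|^s$ times this estimate against Lemma \ref{lem_lowexpint} then gives $||\nabla|^s\mathfrak{m}(t,x)|\lesssim\langle t\rangle^{-(s+n+2)/2}$, which is the right-hand side of the minimum.

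For the spatial-decay side $||\nabla|^s\mathfrak{m}(t,x)|\lesssim|x|^{-(s+n+2)}$ I would mimic the treatment of $\mathfrak{d}_1$: assuming $|x|\ge\langle t\rangle^{1/2}$ (otherwise the time-decay bound already suffices) and $|x|\sim|x_1|$, I separate the very low frequencies $|\xi|\le c/|x|$, where the symbol bound $|\widehat{\mathfrak{m}}|\lesssim|\xi|^2$ integrates directly to $\int_{|\xi|\le c/|x|}|\xi|^{s+2}\,d\xi\lesssim|x|^{-(s+n+2)}$, from the remaining part, where I integrate by parts $j:=[s]+n+3$ times in $\xi_1$. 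For this I need the derivative analogue of the symbol bound,
\[
|\partial_1^k\widehat{\mathfrak{m}}(t,\xi)|
\lesssim \sum_{l}\langle t\rangle^{l}\,|\xi|^{2l-k+2}\,e^{-t|\xi|^2}
\qquad (|\xi|\le\tfrac14),
\]
which has the same shape as \eqref{lowderivL} but with two extra powers of $|\xi|$; it is obtained by differentiating the factorization above, using Lemma \ref{lem_derivk} for $\partial_1^k$ of the dominant part, the analogous Hermite-type expansion of $\partial_1^k e^{-t|\xi|^2}$, and the vanishing orders of $F$ and $\frac{1}{2\psi}-1$. Feeding this into the integration by parts and applying Lemma \ref{lem_lowexpint} together with $|x|\ge\langle t\rangle^{1/2}$ then closes the estimate exactly as in the passage from \eqref{d(t)est1'} to \eqref{d(t)est1''}.

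The main obstacle is the derivative bound displayed above: tracking the two extra powers of $|\xi|$ through every term produced by $\partial_1^k$ requires care, since differentiating the heat factor $e^{-t|\xi|^2}$ reintroduces powers of $t\xi_1$, and one must verify that the cancellation responsible for $F=O(|\xi|^4)$ and $\frac{1}{2\psi}-1=O(|\xi|^2)$ survives differentiation rather than being destroyed. I expect this to be the single place where an explicit recursion in the spirit of Lemma \ref{lem_derivk} and \eqref{recurrenceC} is genuinely needed; everything else is a routine repetition of the computations already carried out for $\mathfrak{d}$.
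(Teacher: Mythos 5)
Your proposal is correct and follows essentially the same route as the paper: your factorization of the symbol of $\mathfrak{m}$ into the $O(|\xi|^2)$ term $\tfrac{1}{2\psi}-1$ and the $O(t|\xi|^4)$ term $\tfrac{e^{tF}-1}{2\psi}$ is exactly the content of Lemma \ref{mulpoint}, and the derivative bound you flag as the main obstacle is precisely what the paper establishes via the recursion of Lemma \ref{lem_derivkg} together with the cancellation $D^{(k)}_{l,l}=2^l C^{(k)}_{l,l}$, while your decomposition for the spatial-decay side matches the paper's splitting $\mathfrak{m}=\mathfrak{m}_1+\mathfrak{m}_2+\mathfrak{m}_3+\mathfrak{m}_4$. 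The only (harmless) deviation is that you use $|e^{tF}-1|\le t|F|$ globally, which lets you drop the auxiliary term $e^{-t^{1/2}}\bm{1}_{(\langle t\rangle^{-1/4},\infty)}(|\xi|)$ that the paper carries through Lemma \ref{mulpoint}.
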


For the proof of Proposition \ref{prop_pointwisediff}, we observe the following two lemmas.

\begin{lemma} \label{lem_derivkg}
For $k \in \mathbb{N}$, there exist some constants $D_{l,m}^{(k)}$ $(k-[\frac{k}{2}] \le l \le k$, $1 \le m \le l)$ satisfying
\begin{equation}
\partial_1^{k} \left( e^{-t|\xi|^2} \right)
= e^{-t |\xi|^2} \sum_{l=k-[\frac{k}{2}]}^k \sum_{m=1}^l D_{l,m}^{(k)} t^m \xi_1^{2l-k}. \label{derivkg}
\end{equation}
In particular, $D^{(k)}_{l,l} = 2^l C^{(k)}_{l,l}$ for $k-[\frac{k}{2}] \le l \le k$, where $C^{(k)}_{k,l}$ are the constants in Lemma \ref{lem_derivk}.
\end{lemma}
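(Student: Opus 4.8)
The plan is to argue by induction on $k$, mirroring the proof of Lemma \ref{lem_derivk} but with the simpler generating function $e^{-t|\xi|^2}$ in place of the hyperbolic expression. For the base case $k=1$ I would simply compute $\partial_1(e^{-t|\xi|^2}) = -2t\xi_1 e^{-t|\xi|^2}$, which is of the asserted form with the single nonzero coefficient $D_{1,1}^{(1)} = -2$; here the range $k-[\frac{k}{2}]\le l\le k$, $1\le m\le l$ collapses to $l=m=1$.

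For the inductive step, assuming \eqref{derivkg} for some $k \in \mathbb{N}$, I would adopt the convention $D^{(k)}_{l,m}=0$ for indices $(l,m)$ outside the stated range and differentiate once more. Using $\partial_1 e^{-t|\xi|^2}=-2t\xi_1 e^{-t|\xi|^2}$ gives
\begin{align*}
\partial_1^{k+1}(e^{-t|\xi|^2})
= e^{-t|\xi|^2}\sum_{l,m} D^{(k)}_{l,m}\Big( -2\, t^{m+1}\xi_1^{2l-k+1} + (2l-k)\, t^{m}\xi_1^{2l-k-1}\Big).
\end{align*}
Reindexing the first sum by $(l,m)\mapsto(l-1,m-1)$ and leaving the second as is, both contributions carry the power $\xi_1^{2l-(k+1)}$, and collecting coefficients yields the recurrence
\begin{align*}
D^{(k+1)}_{l,m} = -2\, D^{(k)}_{l-1,m-1} + (2l-k)\, D^{(k)}_{l,m}.
\end{align*}
The only point requiring care is the bookkeeping of the index ranges: I would check, distinguishing the parities of $k$, that the shifted first sum (ranging over $k+1-[\frac{k}{2}]\le l\le k+1$) together with the unshifted second sum (ranging over $k-[\frac{k}{2}]\le l\le k$) exactly reproduces the claimed range $k+1-[\frac{k+1}{2}]\le l\le k+1$, $1\le m\le l$. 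This is the same combinatorial matching that appears in Lemma \ref{lem_derivk}, only without the third summand, and it establishes \eqref{derivkg}.

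Finally, for the identity $D^{(k)}_{l,l}=2^l C^{(k)}_{l,l}$, I would specialize the recurrence above to the diagonal $m=l$,
\begin{align*}
D^{(k+1)}_{l,l} = -2\, D^{(k)}_{l-1,l-1} + (2l-k)\, D^{(k)}_{l,l},
\end{align*}
and compare it with the diagonal case of \eqref{recurrenceC}. Setting $m=l$ there, the last term $(2l-m-1)C^{(k)}_{l-1,m}$ becomes $(l-1)C^{(k)}_{l-1,l}$, which vanishes because its second index $l$ exceeds its first index $l-1$ and hence lies outside the admissible range for $C$. Thus $C^{(k+1)}_{l,l} = -C^{(k)}_{l-1,l-1} + (2l-k)C^{(k)}_{l,l}$, and an immediate induction, anchored at the base value $D^{(1)}_{1,1}=-2=2\,C^{(1)}_{1,1}$, propagates the relation $D^{(k)}_{l,l}=2^l C^{(k)}_{l,l}$ to $k+1$. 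I expect the index-range matching in the inductive step to be the only genuinely delicate point; everything else is a direct computation.
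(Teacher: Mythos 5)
Your proposal is correct and follows essentially the same route as the paper: the base case $D^{(1)}_{1,1}=-2$, the product-rule computation with the convention that $D^{(k)}_{l,m}=0$ outside the index range, the recurrence $D^{(k+1)}_{l,m}=-2D^{(k)}_{l-1,m-1}+(2l-k)D^{(k)}_{l,m}$, and the comparison of the diagonal cases of the two recurrences (using $C^{(k)}_{l-1,l}=0$) to propagate $D^{(k)}_{l,l}=2^lC^{(k)}_{l,l}$. No gaps.
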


\begin{proof}
For $k=1$, we have $D^{(1)}_{1,1}=-2$, because $\partial_1 \left( e^{-t|\xi|^2} \right) = -2t\xi_1 e^{-t|\xi|^2}$.
We assume that \eqref{derivkg} holds for some $k \in \mathbb{N}$.
For simplicity, we define $D^{(k)}_{l,m} =0$ for $(l,m) \notin \{ (l,m) \in \mathbb{N}^2 \colon  k-[\frac{k}{2}] \le l \le k, 1 \le m \le l \}$.
Then, a direct calculation yields
\begin{align*}
& e^{t |\xi|^2} \partial_1^{k+1} \left( e^{-t |\xi|^2} \right) \\
&= e^{t |\xi|^2} \partial_1 \left\{ e^{-t |\xi|^2} \sum_{l=k-[\frac{k}{2}]}^k \sum_{m=1}^l D^{(k)}_{l,m} t^m \xi_1^{2l-k} \right\} \\
& = \sum_{l=k-[\frac{k}{2}]}^k \sum_{m=1}^l D^{(k)}_{l,m} \left\{ -2t^{m+1} \xi_1^{2l-k+1} + (2l-k) t^m \xi_1^{2l-k-1} \right\} \\
& = \sum_{l=k+1-[\frac{k}{2}]}^{k+1} \sum_{m=2}^{l} (-2) D^{(k)}_{l-1,m-1} t^{m} \xi_1^{2l-(k+1)} + \sum_{l=k+1-[\frac{k+1}{2}]}^k \sum_{m=1}^l (2l-k) D^{(k)}_{l,m} t^m \xi_1^{2l-(k+1)} \\
& = \sum_{l=k+1-[\frac{k+1}{2}]}^{k+1} \sum_{m=1}^{l} \left\{ -2 D^{(k)}_{l-1,m-1} + (2l-k) D^{(k)}_{l,m} \right\} t^m \xi_1^{2l-(k+1)}.
\end{align*}
Hence, the constants $D^{(k+1)}_{l,m}$ are defined by
\[
D^{(k+1)}_{l,m} := -2 D^{(k)}_{l-1,m-1} + (2l-k) D^{(k)}_{l,m},
\]
which shows \eqref{derivkg}.
In particular, $2^{-l} D^{(k)}_{l,l}$ satisfies \eqref{recurrenceC} with $m=l$ because $C^{(k)}_{l-1,l}=0$.
Since $C^{(1)}_{1,1}=-1$ and $D^{(1)}_{1,1} = -2$, we obtain $D^{(k)}_{l,l} = 2^l C^{(k)}_{l,l}$ for $k-[\frac{k}{2}] \le l \le k$.
\end{proof}

\begin{lemma} \label{mulpoint}
For $|\xi| \le \frac{1}{4}$, we have
\[
\left| \frac{e^{t \left( -\frac{1}{2}+\sqrt{\frac{1}{4}-|\xi|^2} \right)}}{2 \sqrt{\frac{1}{4}-|\xi|^2}}  - e^{-t|\xi|^2} \right|
\lesssim \left( |\xi|^2 + \langle t \rangle |\xi|^4 \right) e^{-t|\xi|^2} + e^{-t^{\frac{1}{2}}} \bm{1}_{(\langle t \rangle^{-\frac{1}{4}}, \infty)} (|\xi|).
\]
Furthermore, for $k \in \mathbb{N}$ and $|\xi| \le \frac{1}{4}$, we have
\begin{align*}
& \left| \partial_1^{k} \left( \frac{e^{t \left( -\frac{1}{2}+\sqrt{\frac{1}{4}-|\xi|^2} \right)}}{2 \sqrt{\frac{1}{4}-|\xi|^2}} - e^{-t|\xi|^2} \right) \right| \\
& \lesssim \sum_{l=k-[\frac{k}{2}]-1}^{k+1} \langle t \rangle^l |\xi|^{2(l+1)-k} e^{-t|\xi|^2}
+ \langle t \rangle^k e^{-t^{\frac{1}{2}}} \bm{1}_{(\langle t \rangle^{-\frac{1}{4}}, \infty)} (|\xi|).
\end{align*}
\end{lemma}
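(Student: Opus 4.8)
The plan is to peel off the common Gaussian factor $e^{-t|\xi|^2}$ and reduce both inequalities to controlling a bracket that is $O(|\xi|^2)+O(t|\xi|^4)$. Rationalizing as in \eqref{rationalize}, I would write $-\tfrac12+\sqrt{\tfrac14-|\xi|^2} = -|\xi|^2 + g(\xi)$, where $g(\xi) := \sqrt{\tfrac14-|\xi|^2}-\tfrac12+|\xi|^2$ satisfies $g(\xi)\le 0$ and $|g(\xi)|\lesssim|\xi|^4$ on $|\xi|\le\tfrac14$. Setting $h(\xi):=(1-4|\xi|^2)^{-1/2}=\bigl(2\sqrt{\tfrac14-|\xi|^2}\bigr)^{-1}$, a smooth function of $|\xi|^2$ with $h(0)=1$ and $|h-1|\lesssim|\xi|^2$, the left-hand side of the first inequality equals $e^{-t|\xi|^2}\bigl(h(\xi)e^{tg(\xi)}-1\bigr)$. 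Since $g\le 0$ gives $|e^{tg}-1| = 1-e^{tg}\le t|g|\lesssim t|\xi|^4$ and $h\sim 1$ on the domain, the bracket is $\lesssim|\xi|^2+t|\xi|^4$, which yields the first term; in the regime $|\xi|>\langle t\rangle^{-\frac14}$ one may instead use $|he^{tg}-1|\lesssim 1$ together with $e^{-t|\xi|^2}\lesssim e^{-t^{1/2}}$ (the latter because $t^{1/2}-t\langle t\rangle^{-1/2}$ is bounded on $[0,\infty)$), producing the second term. This settles the first estimate.

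For the derivative bound I would differentiate both terms explicitly. By Lemma \ref{lem_derivk} (applied with the extra factor $\tfrac12 e^{-t/2}$) and Lemma \ref{lem_derivkg}, after factoring out $e^{-t|\xi|^2}$ the quantity $\partial_1^k(\cdots)$ becomes
\[
e^{-t|\xi|^2}\Bigl[\tfrac12 e^{tg}\sum_{l=k-[\frac{k}{2}]}^{k}\sum_{m=0}^{l} C^{(k)}_{l,m}\, t^m \xi_1^{2l-k}(\tfrac14-|\xi|^2)^{-l+\frac{m-1}{2}} - \sum_{l=k-[\frac{k}{2}]}^{k}\sum_{m=1}^{l} D^{(k)}_{l,m}\,t^m\xi_1^{2l-k}\Bigr].
\]
On $|\xi|\le\tfrac14$ the factors $(\tfrac14-|\xi|^2)^{-l+\frac{m-1}{2}}$ stay bounded above and below, and $2l-k\ge0$, so each monomial is controlled by $t^m|\xi|^{2l-k}$. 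The decisive point is the top power $m=l$: there the identity $D^{(k)}_{l,l}=2^l C^{(k)}_{l,l}$ from Lemma \ref{lem_derivkg} forces the two contributions to cancel at $\xi=0$, since $(\tfrac14-|\xi|^2)^{-\frac{l+1}{2}}|_{\xi=0}=2^{l+1}$. Writing the $m=l$ difference as
\[
\tfrac12 C^{(k)}_{l,l}\,2^{l+1}\,t^l\xi_1^{2l-k}\bigl[e^{tg}(1-4|\xi|^2)^{-\frac{l+1}{2}}-1\bigr]
\]
and splitting the bracket as $e^{tg}\bigl[(1-4|\xi|^2)^{-\frac{l+1}{2}}-1\bigr]+(e^{tg}-1)$, the two pieces are $\lesssim|\xi|^2$ and $\lesssim t|\xi|^4$; hence this term is $\lesssim\bigl(\langle t\rangle^{l}|\xi|^{2(l+1)-k}+\langle t\rangle^{l+1}|\xi|^{2(l+2)-k}\bigr)e^{-t|\xi|^2}$, i.e.\ the target summands with $l'=l$ and $l'=l+1$.

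The remaining monomials all have $m\le l-1$ (including the $m=0$ term of the damped-wave sum, which has no heat counterpart), and for these no cancellation is needed: using $|e^{tg}|\le1$, $|\xi_1|^{2l-k}\le|\xi|^{2l-k}$, and $t^m\le\langle t\rangle^{l-1}$, each is $\lesssim\langle t\rangle^{l-1}|\xi|^{2l-k}e^{-t|\xi|^2}$, a target summand with $l'=l-1$. As $l$ ranges over $[k-[\tfrac{k}{2}],k]$, the indices $l'\in\{l-1,l,l+1\}$ sweep out exactly $[k-[\tfrac{k}{2}]-1,k+1]$, which produces the claimed sum; the region $|\xi|>\langle t\rangle^{-\frac14}$ is again absorbed into $\langle t\rangle^k e^{-t^{1/2}}$ as in the first part. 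The main obstacle is purely the bookkeeping of this top-order cancellation: one must check that subtracting the heat-kernel derivative removes precisely the non-decaying part of the damped-wave derivative, and this rests entirely on the coefficient identity $D^{(k)}_{l,l}=2^l C^{(k)}_{l,l}$ together with the fact that $\tfrac14-|\xi|^2$ is bounded away from $0$; everything else reduces to the elementary estimates $|h-1|\lesssim|\xi|^2$ and $|e^{tg}-1|\lesssim t|\xi|^4$.
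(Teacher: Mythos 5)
Your argument is correct and follows essentially the same route as the paper: the same rationalization $-\tfrac12+\sqrt{\tfrac14-|\xi|^2}=-|\xi|^2+g(\xi)$ with $|g|\lesssim|\xi|^4$, the same regime split at $|\xi|=\langle t\rangle^{-1/4}$ for the exponentially small remainder, and for the derivative bound the same reduction to Lemmas \ref{lem_derivk} and \ref{lem_derivkg} with the top-order cancellation driven by the identity $D^{(k)}_{l,l}=2^lC^{(k)}_{l,l}$. The only differences are cosmetic (the order in which the bracket $h\,e^{tg}-1$ is split), and your index bookkeeping reproducing the range $l\in[k-[\tfrac{k}{2}]-1,k+1]$ matches the paper's.
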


\begin{proof}
We note that
\begin{align*}
& \left| \frac{1}{\sqrt{1-4|\xi|^2}} -1 \right| = \frac{4|\xi|^2}{\sqrt{1-4|\xi|^2}(1+\sqrt{1-4|\xi|^2})} \lesssim |\xi|^2, \\
&
\begin{aligned}
\left| e^{t \left( -\frac{1}{2}+\sqrt{\frac{1}{4}-|\xi|^2} \right)}-e^{-t|\xi|^2} \right|
& = e^{-t|\xi|^2} \left| \exp \left( -\frac{4t|\xi|^4}{(1+\sqrt{1-4|\xi|^2})^2} \right)-1 \right| \\
& \lesssim t |\xi|^4 e^{-t|\xi|^2} \bm{1}_{[0,\langle t \rangle^{-\frac{1}{4}}]} (|\xi|) + e^{-t \langle t \rangle^{-\frac{1}{2}}} \bm{1}_{(\langle t \rangle^{-\frac{1}{4}}, \infty)} (|\xi|) \\
& \lesssim \langle t \rangle |\xi|^4 e^{-t|\xi|^2} + e^{-t^{\frac{1}{2}}} \bm{1}_{(\langle t \rangle^{-\frac{1}{4}}, \infty)} (|\xi|)
\end{aligned}
\end{align*}
for
$|\xi| \le \frac{1}{4}$, where  in the second inequality we used 
\[
	-\frac12+\sqrt{\frac{1}{4}-|\xi|^2}+|\xi|^2
		= -\frac{1}{4}\left(1-\sqrt{1-4|\xi|^2}\right)^2
		=-\frac{4|\xi|^4}{\left(1+\sqrt{1-4|\xi|^2}\right)^2}.
\]
The triangle inequality with \eqref{rationalize} implies
\begin{align*}
	\left| \frac{e^{t \left( -\frac{1}{2}+\sqrt{\frac{1}{4}-|\xi|^2} \right)}}{2 \sqrt{\frac{1}{4}-|\xi|^2}}
		- e^{-t|\xi|^2} \right|
	& \le e^{-t|\xi|^2} \left| \frac{1}{\sqrt{1-4|\xi|^2}} -1 \right|
		+ \left| \frac{e^{t \left( -\frac{1}{2}+\sqrt{\frac{1}{4}-|\xi|^2} \right)}
		-e^{-t|\xi|^2} }{\sqrt{1-4|\xi|^2}} \right| \\
& \lesssim e^{-t|\xi|^2} \left( |\xi|^2 + \langle t \rangle |\xi|^4 \right) + e^{-t^{\frac{1}{2}}} \bm{1}_{(\langle t \rangle^{-\frac{1}{4}}, \infty)} (|\xi|).
\end{align*}
Similarly, Lemmas \ref{lem_derivk} and \ref{lem_derivkg} yield that, for $k \in \mathbb{N}$, 
\begin{align*}
& \left| \partial_1^{k} \left( \frac{e^{t \left( -\frac{1}{2}+\sqrt{\frac{1}{4}-|\xi|^2} \right)}}{2 \sqrt{\frac{1}{4}-|\xi|^2}} - e^{-t|\xi|^2} \right) \right| \\
& \le e^{-t|\xi|^2} \sum_{l=k-[\frac{k}{2}]}^k |C_{l,l}^{(k)}| t^l |\xi|^{2l-k} \Bigg\{ \left| \frac{1}{2} (\tfrac{1}{4}-|\xi|^2)^{-\frac{l+1}{2}} -2^l \right| \\
& \hspace*{150pt} + \left| e^{t(-\frac{1}{2}+\sqrt{\frac{1}{4}-|\xi|^2}+|\xi|^2)}-1 \right| 2^l \Bigg\} \\
& \quad + \frac{e^{-t|\xi|^2}}{2} \sum_{l=k-[\frac{k}{2}]}^k \sum_{m=0}^{l-1} |C_{l,m}^{(k)}| t^m |\xi|^{2l-k} (\tfrac{1}{4}-|\xi|^2)^{-l+\frac{m-1}{2}} \\
& \quad + e^{-t |\xi|^2} \sum_{l=k-[\frac{k}{2}]}^k \sum_{m=1}^{l-1} |D_{l,m}^{(k)}| t^m |\xi|^{2l-k} \\
& \lesssim e^{-t|\xi|^2} \sum_{l=k-[\frac{k}{2}]}^k \langle t \rangle^l |\xi|^{2l-k} \left( |\xi|^2 + t|\xi|^4 \right) + \langle t \rangle^k e^{-t^{\frac{1}{2}}} \bm{1}_{(\langle t \rangle^{-\frac{1}{4}}, \infty)} (|\xi|) \\
& \quad + e^{-t|\xi|^2} \sum_{l=k-[\frac{k}{2}]}^k \langle t \rangle^{l-1} |\xi|^{2l-k} \\
& \lesssim \sum_{l=k-[\frac{k}{2}]-1}^{k+1} \langle t \rangle^l |\xi|^{2(l+1)-k} e^{-t|\xi|^2}
+ \langle t \rangle^k e^{-t^{\frac{1}{2}}} \bm{1}_{(\langle t \rangle^{-\frac{1}{4}}, \infty)} (|\xi|).
\end{align*}
\end{proof}

\begin{proof}[Proof of Proposition \ref{prop_pointwisediff}]
Lemmas \ref{lem_lowexpint} and \ref{mulpoint} give
\begin{equation} \label{m(t)est0}
\begin{aligned}
||\nabla|^s \mathfrak{m} (t,x) |
&= (2\pi)^{-\frac{n}{2}} \left| \int_{\mathbb{R}^n} e^{ix\xi} \chi_{<1}(|\xi|) |\xi|^s \left\{ e^{-\frac{t}{2}} L(t,\xi) - e^{-t|\xi|^2} \right\} d\xi \right| \\
& \lesssim \int_{|\xi|<\frac{1}{4}} \left\{ (|\xi|^{s+2} + \langle t \rangle |\xi|^{s+4})
	e^{-t|\xi|^2} + e^{-t^{\frac{1}{2}}} \right\} d\xi \\
& \hspace*{50pt} + \int_{\frac{1}{4} \le |\xi| \le 2} \left( \langle t \rangle e^{-\frac{t}{2}} + e^{-t|\xi|^2} \right) d\xi \\
& \lesssim \langle t \rangle^{-\frac{s+n}{2}-1}.
\end{aligned}
\end{equation}

To obtain the decay with respect to $|x|$, we assume $|x| \ge \langle t \rangle^{\frac{1}{2}}$, otherwise the desired bound follows from \eqref{m(t)est0}.
We divide $\mathfrak{m}$ into four parts $\mathfrak{m} = \mathfrak{m}_1 + \mathfrak{m}_2 + \mathfrak{m}_3 + \mathfrak{m}_4$:
\begin{align*}
& \mathfrak{m}_1 (t,x) := \mathcal{F}^{-1} \left[ \chi_{< \frac{1}{8|x|}} (|\xi|) \left( \tfrac{e^{t \left( -\frac{1}{2}+\sqrt{\frac{1}{4}-|\xi|^2} \right)}}{2 \sqrt{\frac{1}{4}-|\xi|^2}} - e^{-t|\xi|^2} \right) \right] (x), \\
& \mathfrak{m}_2 (t,x) := \mathcal{F}^{-1} \left[ \chi_{\frac{1}{8|x|} \le \cdot <\frac{1}{8}} (|\xi|) \left( \tfrac{e^{t \left( -\frac{1}{2}+\sqrt{\frac{1}{4}-|\xi|^2} \right)}}{2 \sqrt{\frac{1}{4}-|\xi|^2}} - e^{-t|\xi|^2} \right) \right] (x), \\
& \mathfrak{m}_3 (t,x) := -\mathcal{F}^{-1} \left[ \chi_{<\frac{1}{8}} (|\xi|) \tfrac{e^{-t \left( \frac{1}{2}+\sqrt{\frac{1}{4}-|\xi|^2} \right)}}{2 \sqrt{\frac{1}{4}-|\xi|^2}} \right] (x), \\
& \mathfrak{m}_4 (t,x) := \mathcal{F}^{-1} \left[ \chi_{\frac{1}{8} \le \cdot < 1} (|\xi|) \left( e^{-\frac{t}{2}} L(t,\xi) - e^{-t|\xi|^2} \right) \right] (x).
\end{align*}
Without loss of generality, we may assume that $|x| \sim |x_1|$.

Since $\frac{1}{4|x|} \le \frac{1}{4} \langle t \rangle^{-\frac{1}{2}} \le \langle t \rangle^{-\frac{1}{4}}$, Lemma  \ref{mulpoint} yields
\begin{equation} \label{m(t)est1}
\begin{aligned}
| |\nabla|^s \mathfrak{m}_1 (t,x)|
& = (2\pi)^{-\frac{n}{2}} \left| \int_{\mathbb{R}^n} e^{ix\xi} \chi_{<\frac{1}{8|x|}} (|\xi|) |\xi|^s \left( \tfrac{e^{t \left( -\frac{1}{2}+\sqrt{\frac{1}{4}-|\xi|^2} \right)}}{2 \sqrt{\frac{1}{4}-|\xi|^2}} - e^{-t|\xi|^2} \right) d\xi \right| \\
& \lesssim \int_{|\xi| \le \frac{1}{4|x|}} \left( |\xi|^{s+2} + \langle t \rangle |\xi|^{s+4} \right) d\xi \\
& \lesssim |x|^{-s-n-2} + t |x|^{-s-n-4}
\sim |x|^{-s-n-2}.
\end{aligned}
\end{equation}

We set $j := [s]+n+3$ for simplicity.
Integration by parts $j$-times yields
\begin{align*}
	& | |\nabla|^s \mathfrak{m}_2 (t,x)| \\
	&= (2\pi)^{-\frac{n}{2}}
		\left| \int_{\mathbb{R}^n} e^{ix\xi}
		\chi_{\frac{1}{8|x|} \le \cdot <\frac{1}{8}} (|\xi|) |\xi|^s
		\left( \tfrac{
			e^{t \left( -\frac{1}{2}+\sqrt{\frac{1}{4}-|\xi|^2} \right)}}{
			2 \sqrt{\frac{1}{4}-|\xi|^2}} - e^{-t|\xi|^2}
		\right) d\xi \right| \\
	&= (2\pi)^{-\frac{n}{2}} \frac{1}{|x_1|^j}
	\left| \sum_{k=0}^j \binom{j}{k}
	\int_{\mathbb{R}^n} e^{ix\xi} \partial_1^{j-k}
	\left( \chi_{\frac{1}{8|x|} \le \cdot <\frac{1}{8}} (|\xi|) |\xi|^s \right) \right.\\
	&\quad \left.\times\partial_1^k
		\left(
			\tfrac{
				e^{t \left( -\frac{1}{2}+\sqrt{\frac{1}{4}-|\xi|^2} \right)}}{
				2 \sqrt{\frac{1}{4}-|\xi|^2}} - e^{-t|\xi|^2}
		\right) d\xi \right| 
\end{align*}
Since $| \partial_1^k ( \chi_{\frac{1}{8|x|}
\le \cdot <\frac{1}{8}} (|\xi|) |\xi|^s ) | \lesssim |\xi|^{s-k}$ for $k \in \mathbb{Z}_{\ge 0}$,
we proceed the calculation with Lemma \ref{mulpoint} to obtain
\begin{align*}
	& | |\nabla|^s \mathfrak{m}_2 (t,x)| \\
	& \lesssim |x|^{-j} \Bigg(
		\int_{\frac{1}{8|x|} \le |\xi| \le \frac{1}{4}}
		(|\xi|^{s-j+2}+ \langle t \rangle |\xi|^{s-j+4}) e^{-t |\xi|^2} d\xi \\
	& \hspace*{100pt} + \sum_{k=1}^j
		\sum_{l=k-[\frac{k}{2}]-1}^{k+1}
			\langle t \rangle^l
			\int_{|\xi| \le \frac{1}{4}} |\xi|^{s-j+2(l+1)} e^{-t |\xi|^2} d\xi \Bigg) \\
	& \quad + |x|^{-j} \langle t \rangle^j e^{-t^{\frac{1}{2}}}.
\end{align*}
Furthermore, Lemma \ref{lem_lowexpint} gives
\begin{align*}
	& | |\nabla|^s \mathfrak{m}_2 (t,x)| \\
	& \lesssim |x|^{-j} \Bigg\{
		|x|^{j-s-n-2}
		+ \sum_{l=0}^{j+1} \langle t \rangle^l
			\Big( \langle t \rangle^{-(l+1)+\frac{j-s-n}{2}} \bm{1}_{(\frac{j-s-n}{2}-1,j+1]} (l) \\
	& \hspace*{40pt}  + |\log (\langle t \rangle^{\frac{1}{2}} |x|^{-1})|
		\bm{1}_{\{ l=\frac{j-s-n}{2}-1 \}} (l) + |x|^{-2(l+1)+j-s-n}
		\bm{1}_{[0,\frac{j-s-n}{2}-1)}(l) \Big) \Bigg\}\\
	& \quad + |x|^{-j} \langle t \rangle^j e^{-t^{\frac{1}{2}}} \\
	& \lesssim |x|^{-s-n-2} + |x|^{-j}
		\langle t \rangle^{\frac{j-s-n}{2}-1}
		|\log (\langle t \rangle^{\frac{1}{2}} |x|^{-1})| 
			+ |x|^{-j} \langle t \rangle^j e^{-t^{\frac{1}{2}}} \\
	& \lesssim |x|^{-s-n-2},
	\label{m(t)est2}
\end{align*}
provided that $|x| \ge \langle t \rangle^{\frac{1}{2}}$.

The calculation used in \eqref{d(t)est2} yields
\begin{equation} \label{m(t)est3}
| |\nabla|^s \mathfrak{m}_3 (t,x)|
\lesssim |x|^{-j} \langle t \rangle^j e^{-\frac{t}{2}}, \quad 
| |\nabla|^s \mathfrak{m}_4 (t,x)|
\lesssim |x|^{-j} \langle t \rangle^{j+1} e^{\frac{-2+\sqrt{3}}{4}t}
\end{equation}
for any $j \in \mathbb{N}$.
Combining \eqref{m(t)est0}--\eqref{m(t)est3} we obtain the desired bound.
\end{proof}

Proposition \ref{prop_pointwisediff} leads to the $L^p$-$L^q$ estimate for the difference.

\begin{proposition} \label{prop_lpbounddiff}
Let $1\le q \le p \le \infty$ and $s_1 \ge s_2 \ge 0$.
Then, 
\[
\| |\nabla|^{s_1} (\mathcal{D}_1(t) - \mathcal{G}(t)) g \|_{L^p}
\lesssim \langle t \rangle^{-\frac{n}{2}\left( \frac{1}{q} - \frac{1}{p} \right) -\frac{s_1-s_2}{2}-1} \| |\nabla|^{s_2} g \|_{L^q}
\]
for $|\nabla|^{s_2} g \in L^q(\mathbb{R}^n)$ and $t \ge 1$.
\end{proposition}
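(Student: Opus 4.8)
The plan is to follow exactly the scheme of Proposition~\ref{prop_lpboundlow}: establish an $L^r$ bound on the convolution kernel of $\mathcal{D}_1(t)-\mathcal{G}(t)$ and then invoke Young's inequality. First I would identify the kernel. Since $\mathcal{G}(t)$ has the full (untruncated) symbol $e^{-t|\xi|^2}$ while $\mathfrak{m}$ is built from the truncated heat kernel, the convolution kernel of $\mathcal{D}_1(t)-\mathcal{G}(t)$ is
\[
\mathfrak{d}(t,x)-\mathcal{F}^{-1}[e^{-t|\xi|^2}](x) = \mathfrak{m}(t,x) - \mathfrak{h}(t,x),
\qquad
\mathfrak{h}(t,x):=\mathcal{F}^{-1}[\chi_{\ge 1}(|\xi|)e^{-t|\xi|^2}](x).
\]

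Second, I would turn the pointwise estimate of Proposition~\ref{prop_pointwisediff} into an $L^r$ bound on $|\nabla|^s\mathfrak{m}(t)$. Splitting the spatial integral at the sphere $|x|=\langle t\rangle^{1/2}$ and using $||\nabla|^s\mathfrak{m}(t,x)|\lesssim\min(|x|^{-1},\langle t\rangle^{-1/2})^{s+n+2}$, the inner region contributes $\langle t\rangle^{-(s+n+2)/2}\langle t\rangle^{n/(2r)}$ and the outer region $(\int_{|x|\ge\langle t\rangle^{1/2}}|x|^{-r(s+n+2)}\,dx)^{1/r}$. Here the exponent $s+n+2$ exceeds $n/r$ for every $s\ge 0$ and $r\ge 1$, so the tail converges without any borderline case; this is where the extra $+2$ in the pointwise power, compared with Proposition~\ref{prop_pointwisem1}, pays off and removes the $s=0$ subtlety that appeared in Proposition~\ref{prop_lpboundlow}. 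Both pieces yield the same rate, namely
\[
\| |\nabla|^s\mathfrak{m}(t)\|_{L^r}\lesssim\langle t\rangle^{-\frac{n}{2}(1-\frac1r)-\frac{s}{2}-1}, \qquad 1\le r\le\infty.
\]

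Third, I would dispose of $\mathfrak{h}$. For $|\xi|\ge 1$ and $t\ge 1$ one has $e^{-t|\xi|^2}\le e^{-t/2}e^{-t|\xi|^2/2}$, and differentiating the symbol $\chi_{\ge 1}(|\xi|)|\xi|^s e^{-t|\xi|^2}$ only produces polynomial-in-$t$ factors that are absorbed by this exponential; hence the symbol is a Schwartz function whose seminorms are $O(e^{-ct})$ for some $c>0$, giving $\||\nabla|^s\mathfrak{h}(t)\|_{L^r}\lesssim e^{-ct}$, which is dominated by the preceding bound for $t\ge 1$. Combining the two kernel estimates and applying Young's inequality with $\frac1r=\frac1p-\frac1q+1$,
\[
\| |\nabla|^{s_1}(\mathcal{D}_1(t)-\mathcal{G}(t))g\|_{L^p}
\le \| |\nabla|^{s_1-s_2}(\mathfrak{m}-\mathfrak{h})(t)\|_{L^r}\,\| |\nabla|^{s_2}g\|_{L^q}
\lesssim\langle t\rangle^{-\frac{n}{2}(\frac1q-\frac1p)-\frac{s_1-s_2}{2}-1}\| |\nabla|^{s_2}g\|_{L^q},
\]
using $1-\frac1r=\frac1q-\frac1p$, which is precisely the claim.

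The essential work has already been done in Proposition~\ref{prop_pointwisediff}, so I do not expect a serious obstacle here: the statement is, in effect, a corollary obtained by the same Young's-inequality mechanism as Proposition~\ref{prop_lpboundlow}. The only genuinely new point to check carefully is the treatment of the high-frequency heat tail $\mathfrak{h}$, together with the verification that the two regions separated by $|x|=\langle t\rangle^{1/2}$ produce a consistent rate. Both are routine, and the gain of one full power $\langle t\rangle^{-1}$ over Proposition~\ref{prop_lpboundlow} traces directly back to the two extra powers in the pointwise estimate of Proposition~\ref{prop_pointwisediff}.
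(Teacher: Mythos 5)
Your argument is correct and follows the paper's proof in all essentials: the same decomposition into the low-frequency difference with kernel $\mathfrak{m}$ plus the high-frequency heat tail, the same $L^r$ kernel bound obtained from Proposition \ref{prop_pointwisediff} by splitting at $|x|=\langle t \rangle^{1/2}$ (where, as you note, the extra two powers make the tail integrable even for $s=0$), and the same application of Young's inequality. The only deviation is the treatment of $\mathcal{G}_2(t)$: the paper factors through $\mathcal{F}^{-1}[|\cdot|^{-2N-(s_1-s_2)}\chi_{\ge 1}]\in L^1$ combined with the heat smoothing estimate to get decay $\langle t\rangle^{-2N}$ for arbitrary $N$, whereas you estimate the kernel directly and obtain exponential decay of its $L^r$ norm; both are routine and suffice for $t\ge 1$.
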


\begin{proof}
We divide $\mathcal{G}(t)$ into low and high frequency parts $\mathcal{G}(t) = \mathcal{G}_1(t) + \mathcal{G}_2(t)$, where
\[
\mathcal{G}_1(t) = \mathcal{F}^{-1}[ \chi_{<1} (|\xi|) e^{-t|\xi|^2} \mathcal{F}], \quad
\mathcal{G}_2(t) = \mathcal{F}^{-1}[ \chi_{\ge 1} (|\xi|) e^{-t|\xi|^2} \mathcal{F}].
\]
Since Proposition \ref{prop_pointwisediff} gives
\[
\| |\nabla|^s \mathfrak{m} (t) \|_{L^p}
\lesssim \langle t \rangle^{-\frac{n}{2} \left( 1-\frac{1}{p} \right)-\frac{s}{2}-1}
\]
for $s \ge 0$ and $1 \le p \le \infty$, the same argument as in the proof of Proposition \ref{prop_lpboundlow} yields
\[
\| |\nabla|^{s_1} (\mathcal{D}_1(t) - \mathcal{G}_1(t)) g \|_{L^p}
\lesssim \langle t \rangle^{-\frac{n}{2}\left( \frac{1}{q} - \frac{1}{p} \right) -\frac{s_1-s_2}{2}-1} \| |\nabla|^{s_2} g \|_{L^q}.
\]

Set $r := -(s_1-s_2)+2([s_1-s_2]+n+1)$.
Since $r>n$, integration by parts gives
\[
|\mathcal{F}^{-1} [|\cdot|^{-r} \chi_{\ge 1}]|
= (2\pi)^{-\frac{n}{2}} \left| \int_{\mathbb{R}^n} e^{ix\xi} |\xi|^{-r} \chi_{\ge 1} (|\xi|) d\xi \right|
\lesssim \min (1, |x|^{-n-1}),
\]
which yields that $\mathcal{F}^{-1} [|\cdot|^{-r} \chi_{\ge 1}] \in L^1(\mathbb{R}^d)$.
Hence, Young's inequality and the well-known $L^p$-$L^q$ estimates for the heat equation imply
\begin{align*}
\| |\nabla|^{s_1} \mathcal{G}_2(t) g \|_{L^p}
& = \| \mathcal{F}^{-1} [|\cdot|^{-2N-(s_1-s_2)} \chi_{\ge 1}] \ast (|\nabla|^{2N} \mathcal{G}(t) |\nabla|^{s_2} g) \|_{L^p} \\
& \lesssim \| \Delta^{N} \mathcal{G}(t) |\nabla|^{s_2} g \|_{L^p} \\
& \lesssim \langle t \rangle^{-\frac{n}{2}\left( \frac{1}{q} - \frac{1}{p} \right) -2N} \| |\nabla|^{s_2} g \|_{L^q} \\
\end{align*}
for $t \ge 1$ and for any large $N \in \mathbb{N}$.
\end{proof}

Theorem \ref{thm_lplqdiff} follows from Propositions \ref{prop_lpboundhigh} and \ref{prop_lpbounddiff}.


\section{Local and global existence}
Based on the linear estimates, we define the following function spaces.
For $T \in (0,\infty]$, $s \ge 0$, and $r \in (1,2]$,
we define
\begin{align*}%
	X(T) :=
	\{
	 \phi \in L^{\infty}(0,T ; H^s(\mathbb{R}^n) \cap L^r(\mathbb{R}^n));
	 \| \phi \|_{X(T)} < \infty
	 \}
\end{align*}%
with the norm
\begin{align}%
\label{X}
	\| \phi \|_{X(T)}
	&:= \sup_{t \in (0,T)}
		\left\{
		\langle t \rangle^{\frac{n}{2}\left( \frac{1}{r} - \frac{1}{2}\right) + \frac{s}{2}}
				\| |\nabla|^s \phi(t) \|_{L^2}
		+ \langle t \rangle^{\frac{n}{2}\left( \frac{1}{r} - \frac{1}{2} \right)}
				\| \phi(t) \|_{L^2}
			+ \| \phi(t) \|_{L^r}
		\right\}.
\end{align}%
It is obvious that
$X(T)$ is a Banach space.
Let $M>0$. We consider the closed ball
\begin{align*}%
	X(T, M) := \{ u \in X(T) ; \| u \|_{X(T)} \le M \}.
\end{align*}%
We also consider a wider function space
\begin{align*}%
	Z(T) := L^{\infty}(0,T ; L^2(\mathbb{R}^n))
\end{align*}%
with the norm
\begin{align*}%
	\| u \|_{Z(T)} := \| u \|_{L^{\infty}(0,T;L^2(\mathbb{R}^n))}.
\end{align*}%
Then, we can see that
$X(T, M)$ is a closed subset of
$Z(T)$ for $T \in (0,\infty)$ (see Lemma \ref{lem_close}).
We shall find a local solution
by constructing an approximate sequence
in the ball $X(T, M)$
and prove its convergence
with respect to the metric
\begin{align}%
\label{d}
	d(u,v) := \| u - v \|_{Z(T)}.
\end{align}%

To this end, for the estimate of the nonlinear term
satisfying \eqref{N} and $|\mathcal{N}(u)| \lesssim |u|^{p}$,
we define an auxiliary space.
For $T \in (0,\infty]$, $s \ge 0$, $r \in (1,2]$,
and $1<p \le 2n/(n-2s)$ if $n>2s$ and $1<p<\infty$ if $n\le 2s$ we define the space $Y(T)$ as follows.
When $s>1$, we define
\begin{align*}%
	Y(T) :=
	\{
	 \psi \in L^{\infty}(0,T ; \dot{H}^{s-1}(\mathbb{R}^n) \cap L^{\sigma_1}(\mathbb{R}^n)
	 											\cap L^{\sigma_2}(\mathbb{R}^n));
	 \| \psi \|_{Y(T)} < \infty
	 \},
\end{align*}%
where
\begin{align}%
\label{Y}
	\| \psi \|_{Y(T)}
	&:= \sup_{t\in (0,T)}
		\left\{ \langle t \rangle^{\eta} \| |\nabla|^{s-1} \psi(t)\|_{L^2}
			+ \sup_{\gamma\in [\sigma_1, \sigma_2]}
				\langle t \rangle^{\frac{n}{2}\left( \frac{p}{r}-\frac{1}{\gamma}\right)}
				\| \psi(t) \|_{L^{\gamma}}
		\right\}
\end{align}%
and when $0<s<1$ we define
\begin{align*}%
	Y(T) :=
	\{
	 \psi \in L^{\infty}(0,T; L^{\sigma_1}(\mathbb{R}^n)
	 		\cap L^{\sigma_2}(\mathbb{R}^n));
	 \| \psi \|_{Y(T)} < \infty
	 \},
\end{align*}%
where
\begin{align}%
\label{Y1}
	\| \psi \|_{Y(T)}
	&:= \sup_{t\in (0,T)}
		\left\{ \langle t \rangle^{\eta} \| |\nabla|^{s-1} \chi_{>1}(\nabla) \psi(t)\|_{L^2}
			+ \sup_{\gamma\in [\sigma_1, \sigma_2]}
				\langle t \rangle^{\frac{n}{2}\left( \frac{p}{r}-\frac{1}{\gamma}\right)}
				\| \psi(t) \|_{L^{\gamma}}
		\right\}
\end{align}%
where
\begin{align}%
\nonumber
	\eta &= -\frac12 + \frac{s}{2} + \frac{n}{2}\left( \frac{p}{r}- \frac12 \right),\\
\label{def_sigma_1}
	\sigma_1 &= \max\left\{ 1, \frac{r}{p} \right\},\\
\label{def_sigma_2}
	\sigma_2 &=
	\begin{cases}
	2 
	& \text{if } 2s\geq n,
	\\
	\min \left\{ 2, \frac{2n}{p(n-2s)} \right\} 
	&\text{if } 2s<n.
	\end{cases}
\end{align}%
We remark that the condition
$p \le 1+ \frac{\min\{n,2\}}{n-2s}$ if $2s < n$
implies
$\sigma_1 \le \sigma_2$.
We also note that
the choices of the parameters
$\eta, \sigma_1, \sigma_2$
are quite natural.
Indeed, in the proof of Theorems \ref{thm_lwp} and \ref{thm_gwp},
we use the norm of $Y(T)$ with $\psi = \mathcal{N}(u)$,
that is, the nonlinear term of the equation \eqref{dw}.
Roughly speaking, if $u$ belongs to $X(\infty)$,
then $\| |u(t)|^p \|_{L^{2}} = \| u(t) \|_{L^{2p}}^p$ decays like
$\langle t \rangle^{-\frac{n}{2}\left( \frac{p}{r} - \frac12 \right)}$,
and hence, we expect
$\| |\nabla|^{s-1} |u(t)|^p \|_{L^2} \lesssim \langle t \rangle^{-\eta}$.
Also, roughly speaking,
if $u \in X(\infty)$, then
the Sobolev embedding implies $u(t) \in L^r(\mathbb{R}^n) \cap L^{2n/(n-2s)}(\mathbb{R}^n)$,
and hence, we expect
$|u(t)|^p \in L^{r/p}(\mathbb{R}^n) \cap L^{2n/(p(n-2s))}(\mathbb{R}^n)$
and
$\| |u(t)|^p \|_{L^{\gamma}}$
decays like
$\langle t \rangle^{-\frac{n}{2}\left( \frac{p}{r} - \frac{1}{\gamma} \right)}$
for $\gamma \in [\sigma_1, \sigma_2]$.

\subsection{Local and global existence}
Hereafter, we assume the condition in Theorem  \ref{thm_lwp}.

\begin{lemma}\label{lem_duh}
Under the assumption of Theorem \ref{thm_lwp}, we have
\begin{align*}%
	\left\| \int_0^t \mathcal{D}(t-\tau) \psi(\tau)\,d\tau \right\|_{X(T)}
	&\lesssim \int_0^T \| \psi \|_{Y(\tau)} \,d\tau
\end{align*}%
for any $0<T \le 1$ and $\psi \in Y(T)$, and
\begin{align*}%
	\left\| \int_0^t \mathcal{D}(t-\tau) \psi(\tau)\,d\tau \right\|_{X(T)}
	\lesssim \| \psi \|_{Y(T)}
		\begin{cases}
			1 &\big(
				p\ge 1+\frac{2r}{n}
				\big),\\
			\langle T \rangle^{1-\frac{n}{2r}(p-1)} & \big(p< 1+\frac{2r}{n}\big)
		\end{cases}
\end{align*}%
for any $T > 0$ and $\psi \in Y(T)$.
\end{lemma}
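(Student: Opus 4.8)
Write $\Phi(t):=\int_0^t\mathcal{D}(t-\tau)\psi(\tau)\,d\tau$. The plan is to estimate separately the three quantities making up the norm \eqref{X}, namely the weighted $\dot H^s$-, $L^2$- and $L^r$-norms of $\Phi(t)$, bounding each by $\int_0^t\|\mathcal{D}(t-\tau)\psi(\tau)\|_{\ast}\,d\tau$ (Minkowski, $\ast$ denoting the target norm) and inserting the linear estimate \eqref{lplq} of Theorem \ref{thm_lplq} under the integral, with its low- and high-frequency parts supplied by Propositions \ref{prop_lpboundlow} and \ref{prop_lpboundhigh}. I apply \eqref{lplq} with left-hand Lebesgue exponent $2$ for the $\dot H^s$- and $L^2$-pieces and $r$ for the $L^r$-piece, taking $s_1\in\{0,s\}$ to match the target and choosing an intermediate exponent $q=\gamma\in[\sigma_1,\sigma_2]$ for the low-frequency factor; then the low-frequency part of $\mathcal{D}(t-\tau)\psi(\tau)$ is a negative power of $\langle t-\tau\rangle$ times $\|\psi(\tau)\|_{L^\gamma}$, and the latter is absorbed by the $L^\gamma$-part of the norm \eqref{Y}. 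The high-frequency part carries the fast factor $e^{-(t-\tau)/2}\langle t-\tau\rangle^{\delta_2}$; for exponent $2$ we have $\beta=0$ and $\||\nabla|^{s_1}\chi_{>1}(\nabla)\psi\|_{H_2^{-1}}\sim\||\nabla|^{s_1-1}\chi_{>1}(\nabla)\psi\|_{L^2}\lesssim\||\nabla|^{s-1}\chi_{>1}(\nabla)\psi\|_{L^2}$, which is controlled by the $\dot H^{s-1}$-part of \eqref{Y} (this is exactly why \eqref{Y} records $\||\nabla|^{s-1}\psi\|_{L^2}$), while for exponent $r$ it is treated as in the third paragraph.

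After these substitutions every contribution reduces to a time integral $\int_0^t\langle t-\tau\rangle^{-a}\langle\tau\rangle^{-b}\,d\tau$, whose elementary evaluation (splitting at $\tau=t/2$) gives $\langle t\rangle^{-\min(a,b)}$ when $\min(a,b)>1$, $\langle t\rangle^{-(a+b-1)}$ when $a,b<1$ and $a+b>1$, and $\langle t\rangle^{1-a-b}$ when $a+b<1$, with logarithmic corrections only on the boundaries $a=1$ or $b=1$. For the low-frequency term of $\|\Phi(t)\|_{L^r}$ one has $a=\frac{n}{2}(\frac1\gamma-\frac1r)$ and $b=\frac{n}{2}(\frac pr-\frac1\gamma)$, so that, crucially, the sum $a+b=\frac{n}{2r}(p-1)$ is independent of the free exponent $\gamma$; since the $L^r$-piece of \eqref{X} carries no time weight, we need this integral to stay bounded, which happens exactly when $a+b\ge1$, i.e. $p\ge1+\frac{2r}{n}$, and otherwise produces the growth $\langle t\rangle^{1-\frac{n}{2r}(p-1)}\le\langle T\rangle^{1-\frac{n}{2r}(p-1)}$. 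The same threshold governs the $L^2$- and $\dot H^s$-pieces: there $b$ is unchanged, $a=\frac{n}{2}(\frac1\gamma-\frac12)+\frac{s_1}{2}$ with $s_1\in\{0,s\}$, and $a+b-1$ must be compared with the required weight $\frac{n}{2}(\frac1r-\frac12)+\frac{s_1}{2}$, the difference being again $\frac{n}{2r}(p-1)-1$. This is where I expect the bulk of the bookkeeping: one must choose $\gamma$ (typically an endpoint $\sigma_1$ or $\sigma_2$, see \eqref{def_sigma_1}, \eqref{def_sigma_2}) so that $a$ and $b$ land in the correct regime of the integral lemma, and verify via the constraints on $s,p,r$ that $[\sigma_1,\sigma_2]\ne\emptyset$ and the required inequalities hold.

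The most delicate point is the high-frequency contribution to $\|\Phi(t)\|_{L^r}$ when $r<2$. Here \eqref{lplq} with exponent $r$ produces the norm $\|\chi_{>1}(\nabla)\psi\|_{H_r^{\beta-1}}$ with $\beta=(n-1)(\frac1r-\frac12)$, and because $r<2$ there is no Sobolev embedding controlling it by the $L^2$-based quantity $\||\nabla|^{s-1}\psi\|_{L^2}$. Instead I would route it through the low-integrability control: writing $\|\chi_{>1}(\nabla)\psi\|_{H_r^{\beta-1}}=\|\langle\nabla\rangle^{\beta-1}\chi_{>1}(\nabla)\psi\|_{L^r}$ and applying the Sobolev embedding $\dot H^{n(\frac1\gamma-\frac1r)}_\gamma\hookrightarrow L^r$ for a suitable $\gamma\in[\sigma_1,\sigma_2]$ with $\gamma\le r$, the net operator $\langle\nabla\rangle^{\beta-1+n(\frac1\gamma-\frac1r)}$ has nonpositive order — taking $\gamma=r$ reduces this to order $\beta-1\le0$, the inequality $\beta\le1$ being exactly the hypothesis $r\ge\frac{2(n-1)}{n+1}$ (cf. Remark \ref{rem_r}) — and is hence bounded on $L^\gamma$, so the quantity is $\lesssim\|\psi\|_{L^\gamma}$, again controlled by \eqref{Y}. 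The exponential factor $e^{-(t-\tau)/2}$ then makes the $\tau$-integral trivially convergent and contributes only a bounded factor, consistent with the absence of a time weight on the $L^r$-piece of \eqref{X}.

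Finally, the case $0<T\le1$ is immediate from the same computations. For $t,\tau\in(0,1)$ all the weights $\langle t\rangle$, $\langle\tau\rangle$, $\langle t-\tau\rangle$ are comparable to $1$ and $e^{-(t-\tau)/2}\le1$, so that, at each fixed $\tau$, every relevant spatial norm of $\mathcal{D}(t-\tau)\psi(\tau)$ is dominated by the $L^\gamma$- and $\dot H^{s-1}$-norms of $\psi(\tau)$, hence by $\|\psi\|_{Y(\tau)}$. Bounding each of the three pieces of $\|\Phi\|_{X(T)}$ by $\int_0^t\|\psi\|_{Y(\tau)}\,d\tau\le\int_0^T\|\psi\|_{Y(\tau)}\,d\tau$ gives the first asserted estimate; since its right-hand side vanishes as $T\to0$, it also furnishes the smallness needed for the contraction argument of Theorem \ref{thm_lwp}.
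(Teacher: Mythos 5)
Your overall strategy coincides with the paper's: split $\mathcal{D}=\mathcal{D}_1+\mathcal{D}_2$, insert Theorem \ref{thm_lplq} under the Duhamel integral with an intermediate exponent $q\in[\sigma_1,\sigma_2]$ chosen differently on $[0,t/2]$ and $[t/2,t]$, evaluate the resulting convolution $\int_0^t\langle t-\tau\rangle^{-a}\langle\tau\rangle^{-b}\,d\tau$, and control $\|\chi_{>1}(\nabla)\psi\|_{H^{\beta-1}_r}$ by $\|\psi\|_{L^\mu}$ with $\mu=\max\{\sigma_1,(\tfrac1r+\tfrac{1-\beta}{n})^{-1}\}$; your Sobolev-embedding device for the high-frequency term when $r<2$ is exactly the paper's \eqref{eq3.5}, and your identification of $a+b=\tfrac{n}{2r}(p-1)$ as the source of the dichotomy is correct, as is your treatment of the case $T\le 1$.

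There is, however, one step that fails as you have set it up. For the weighted $\dot H^s$-component of \eqref{X} you only allow $(s_1,s_2)=(s,0)$ together with a Lebesgue exponent $q=\gamma\in[\sigma_1,\sigma_2]$, so on $\tau\in[t/2,t]$ the low-frequency kernel decays like $\langle t-\tau\rangle^{-a}$ with $a=\tfrac{n}{2}(\tfrac1\gamma-\tfrac12)+\tfrac{s}{2}$. Once $a>1$ the near-diagonal integral saturates and returns only $\langle t\rangle^{-b}$ with $b=\tfrac{n}{2}(\tfrac{p}{r}-\tfrac1\gamma)$, which at the critical exponent can be strictly smaller than the required weight $\tfrac{n}{2}(\tfrac1r-\tfrac12)+\tfrac{s}{2}$. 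Concretely, $n=4$, $r=2$, $p=2=1+\tfrac{2r}{n}$, $s\in(2,3)$ satisfies all hypotheses of Theorem \ref{thm_lwp}; then $\sigma_2=2$, $a=\tfrac{s}{2}>1$, $b=1$, but the target weight is $\tfrac{s}{2}>1$, so the estimate does not close. The missing idea is the paper's choice $(q,s_1,s_2)=(2,s,s-1)$ on $[t/2,t]$ when $s>1$: the smoothing term $-\tfrac{s_1-s_2}{2}$ in \eqref{lplq} converts one derivative into a factor $\langle t-\tau\rangle^{-1/2}$, and the price $\||\nabla|^{s-1}\psi(\tau)\|_{L^2}\le\langle\tau\rangle^{-\eta}\|\psi\|_{Y(T)}$ is exactly what the first component of \eqref{Y} was designed to pay (you invoke that component only for the high-frequency part). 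With this substitution $a=\tfrac12$ and $a+b-1=\eta-\tfrac12\ge\tfrac{n}{2}(\tfrac1r-\tfrac12)+\tfrac{s}{2}$ whenever $p\ge1+\tfrac{2r}{n}$, and the estimate closes; for $0\le s\le1$ your choice $q=\sigma_2$ does work, since then $a\le1$. The remainder of your outline is sound.
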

\begin{proof}
The first assertion is easily derived
by modifying the following argument, and we omit its proof.

For the second assertion,
first, we estimate
\begin{align*}%
	&\langle t \rangle^{\frac{n}{2}\left( \frac{1}{r} - \frac{1}{2}\right) + \frac{s}{2}}
				\left\|
					|\nabla|^s \int_0^t \mathcal{D}(t-\tau) \psi(\tau)\,d\tau
				\right\|_{L^2}
		+ \langle t \rangle^{\frac{n}{2}\left( \frac{1}{r} - \frac{1}{2} \right)}
				\left\|
					\int_0^t \mathcal{D}(t-\tau) \psi(\tau)\,d\tau
				\right\|_{L^2} \\
	&\quad + \left\|
				\int_0^t \mathcal{D}(t-\tau) \psi(\tau)\,d\tau
			\right\|_{L^r} \\
	&\le I + I\!I + I\!I\!I,
\end{align*}%
where
\begin{align*}%
	I &:=
	\langle t \rangle^{\frac{n}{2}\left( \frac{1}{r} - \frac{1}{2}\right) + \frac{s}{2}}
				\int_0^{t/2} \| |\nabla|^s \mathcal{D}(t-\tau) \psi(\tau)\|_{L^2} \,d\tau
		+ \langle t \rangle^{\frac{n}{2}\left( \frac{1}{r} - \frac{1}{2} \right)}
				\int_0^{t/2} \| \mathcal{D}(t-\tau) \psi(\tau) \|_{L^2} \,d\tau \\
		&\quad + \int_0^{t/2} \| \mathcal{D}(t-\tau) \psi(\tau) \|_{L^r} \, d\tau,
\end{align*}%
\begin{align*}%
	I\!I&:= \langle t \rangle^{\frac{n}{2}\left( \frac{1}{r} - \frac{1}{2}\right) + \frac{s}{2}}
				\int_{t/2}^t \| |\nabla|^s \mathcal{D}(t-\tau) \psi(\tau)\|_{L^2} \,d\tau
		+ \langle t \rangle^{\frac{n}{2}\left( \frac{1}{r} - \frac{1}{2} \right)}
				\int_{t/2}^t \| \mathcal{D}(t-\tau) \psi(\tau) \|_{L^2} \,d\tau,
\end{align*}%
and
\begin{align*}%
	I\!I\!I &:= \int_{t/2}^t \| \mathcal{D}(t-\tau) \psi(\tau) \|_{L^r} \, d\tau.
\end{align*}%
For the term $I$, applying Theorem \ref{thm_lplq} with 
$p=2$ and
$q = \sigma_1$,
we have
\begin{align*}%
	I &\lesssim \langle t \rangle^{\frac{n}{2}\left( \frac{1}{r} - \frac{1}{2}\right) + \frac{s}{2}}
				\int_0^{t/2}
		\left( \langle t -\tau \rangle^{
			-\frac{n}{2}\left( \frac{1}{\sigma_1}-\frac12 \right)-\frac{s}{2}
			}
		\| \psi (\tau) \|_{L^{\sigma_1}} \right. \\
	&\qquad\qquad\qquad\qquad\qquad\qquad\qquad\qquad \left.+ e^{-\frac{t-\tau}{2}} \langle t-\tau \rangle^{\delta_2}
			\| |\nabla|^{s-1} \chi_{>1}(\nabla) \psi(\tau) \|_{L^2} \right)
		\,d\tau \\
	&\quad
	+ \langle t \rangle^{\frac{n}{2}\left( \frac{1}{r} - \frac{1}{2} \right)}
	\int_0^{t/2} \left( \langle t - \tau \rangle^{
			- \frac{n}{2}\left( \frac{1}{\sigma_1} - \frac12 \right)}
				\| \psi (\tau) \|_{L^{\sigma_1}} \right. \\
	&\qquad\qquad\qquad\qquad\qquad\qquad\qquad\qquad \left. + e^{-\frac{t-\tau}{2}} \langle t -\tau \rangle^{\delta_2}
			\| |\nabla|^{-1} \chi_{>1}(\nabla) \psi (\tau) \|_{L^2} \right) \,d\tau \\
	&\quad
	+ \int_0^{t/2} \left( \langle t-\tau \rangle^{
				-\frac{n}{2}\left( \frac{1}{\sigma_1} - \frac1r \right)}
			\| \psi (\tau) \|_{L^{\sigma_1}}
			+ e^{-\frac{t-\tau}{2}} \langle t-\tau \rangle^{\delta_r}
			\| \chi_{>1}(\nabla) \psi(\tau) \|_{H^{\beta -1}_r} \right) \,d\tau.	
\end{align*}%
We note that $\sigma_1 < r$,
\begin{align*}%
	\| \psi (\tau) \|_{L^{\sigma_1}}
	&\le \langle \tau \rangle^{-\frac{n}{2}\left( \frac{p}{r} - \frac{1}{\sigma_1} \right)}
	\| \psi \|_{Y(T)}
\end{align*}%
and
\begin{align}%
\label{chi_nabla_psi}
	\| |\nabla|^{-1} \chi_{>1}(\nabla) \psi (\tau) \|_{L^2}
	&\le \| |\nabla|^{s-1} \chi_{>1}(\nabla) \psi(\tau) \|_{L^2}
	\le \langle \tau \rangle^{-\eta} \| \psi \|_{Y(T)},\\
\label{eq3.5}
	\| \chi_{>1} (\nabla) \psi(\tau) \|_{H^{\beta -1}_r}
	& \lesssim \| \psi \|_{L^{\mu}}
	\lesssim \langle \tau \rangle^{-\frac{n}{2}\left( \frac{p}{r} - \frac{1}{\mu} \right)}
	\| \psi \|_{Y(T)}
\end{align}%
with
$\mu = \max \{ \sigma_1, (\frac{1}{r}+\frac{1-\beta}{n})^{-1} \} \in [\sigma_1, \sigma_2]$,
because
$p \le 1+ \frac{2}{n-2s}$.
Here, we note that $\beta \le 1$ holds (see Remark \ref{rem_r}).
Therefore, a straightforward calculation gives
\begin{align*}%
	I &\lesssim \| \psi \|_{Y(T)} \begin{cases}
			1 & \big(
				p\ge 1+\frac{2r}{n}
				\big),\\
			\langle T \rangle^{1-\frac{n}{2r}(p-1)} & \big(p< 1+\frac{2r}{n}\big).
		\end{cases}
\end{align*}%

Next, we estimate $I\!I$.
We divide the estimate of $I\!I$ into two cases.
First, when $s > 1$,
from Theorem \ref{thm_lplq} with
$(q,s_1, s_2) = (2,s,s-1)$
and
$(q,s_1,s_2) = (\sigma_2, 0,0)$,
we obtain
\begin{align*}%
	I\!I &\lesssim
	\langle t \rangle^{\frac{n}{2}\left( \frac{1}{r} - \frac{1}{2}\right) + \frac{s}{2}}
	\int_{t/2}^t
		\left( \langle t-\tau \rangle^{-\frac12}
		\| |\nabla|^{s-1} \psi (\tau) \|_{L^2} \right.
	\\
	&\qquad\qquad\qquad\qquad\qquad\qquad\qquad \left. + e^{-\frac{t-\tau}{2}} \langle t - \tau \rangle^{\delta_2}
		\| |\nabla|^{s-1} \chi_{>1}(\nabla) \psi (\tau) \|_{L^2} \right)\, d\tau\\
	&\quad
	+ \langle t \rangle^{\frac{n}{2}\left( \frac{1}{r} - \frac{1}{2} \right)}
	\int_{t/2}^t \left(
		\langle t-\tau \rangle^{-\frac{n}{2}\left( \frac{1}{\sigma_2} - \frac12 \right)}
		\| \psi (\tau) \|_{L^{\sigma_2}}\right.
	\\
	&\qquad\qquad\qquad\qquad\qquad\qquad\qquad \left.+ e^{-\frac{t-\tau}{2}} \langle t-\tau \rangle^{\delta_2}
		\| |\nabla|^{-1} \chi_{>1}(\nabla)\psi(\tau) \|_{L^2} \right)\,d\tau.
\end{align*}%
We compute
\begin{align}%
\nonumber
	\| |\nabla|^{s-1} \psi (\tau) \|_{L^2} 
	&\le \langle \tau \rangle^{-\eta} \| \psi \|_{Y(T)},\\
\label{psi_L_sigma2}
	\| \psi (\tau) \|_{L^{\sigma_2}}
	&\le \langle \tau \rangle^{-\frac{n}{2}\left( \frac{p}{r} - \frac{1}{\sigma_2} \right)}
	\| \psi \|_{Y(T)}
\end{align}%
and
we have \eqref{chi_nabla_psi}.
Then, by a simple calculation, we deduce
\begin{align*}%
	I\!I &\lesssim \| \psi \|_{Y(T)} \begin{cases}
			1 &\big(
				p\ge 1+\frac{2r}{n}
				\big),\\
			\langle T \rangle^{1-\frac{n}{2r}(p-1)} &\big(p< 1+\frac{2r}{n}\big).
		\end{cases}
\end{align*}%

On the other hand, when $0\le s \le 1$,
applying Theorem \ref{thm_lplq} with
$(q,s_1,s_2) = (\sigma_2, s, 0)$ and $(q, s_1, s_2) = (\sigma_2, 0,0)$,
we compute
\begin{align*}%
	I\!I &\le
	\langle t \rangle^{\frac{n}{2}\left( \frac{1}{r} - \frac{1}{2}\right) + \frac{s}{2}}
	\int_{t/2}^t
		\left( \langle t-\tau \rangle^{-\frac{n}{2}\left( \frac{1}{\sigma_2}
		- \frac{1}{2}\right)-\frac{s}{2}}
		\|  \psi (\tau) \|_{L^{\sigma_2}}\right. \\
	&\qquad\qquad\qquad\qquad\qquad\qquad\qquad \left.+ e^{-\frac{t-\tau}{2}} \langle t - \tau \rangle^{\delta_2}
		\| |\nabla|^{s-1} \chi_{>1}(\nabla) \psi (\tau) \|_{L^2} \right)\, d\tau\\
	&\quad
	+ \langle t \rangle^{\frac{n}{2}\left( \frac{1}{r} - \frac{1}{2} \right)}
	\int_{t/2}^t \left(
		\langle t-\tau \rangle^{-\frac{n}{2}\left( \frac{1}{\sigma_2} - \frac12 \right)}
		\| \psi (\tau) \|_{L^{\sigma_2}}\right.
	\\
	&\qquad\qquad\qquad\qquad\qquad\qquad\qquad \left.+ e^{-\frac{t-\tau}{2}} \langle t-\tau \rangle^{\delta_2}
		\| |\nabla|^{-1} \chi_{>1}(\nabla)\psi(\tau) \|_{L^2} \right)\,d\tau.
\end{align*}%
Noting that $-\frac{n}{2}\left( \frac{1}{\sigma_2} - \frac{1}{2}\right)-\frac{s}{2}>-1$
since $0\le s \le1$,
and using \eqref{psi_L_sigma2} and \eqref{chi_nabla_psi}, we conclude
\begin{align*}%
	I\!I
	\lesssim \| \psi \|_{Y(T)}
		\begin{cases}
			1 &\big(  p\ge 1+\frac{2r}{n} \big),\\
			\langle T \rangle^{1-\frac{n}{2r}(p-1)} & \big(p< 1+\frac{2r}{n}\big).
		\end{cases}
\end{align*}%
Thus, for both cases we obtain the desired estimate.

Finally, we estimate $I\!I\!I$.
Theorem \ref{thm_lplq}
with $p=r$, $s_1 = s_2 =0$, and 
$q \in [\sigma_1, \sigma_2]$ determined later
implies
\begin{align*}%
	I\!I\!I
	&\lesssim \int_{t/2}^t \left(
			\langle t-\tau \rangle^{-\frac{n}{2}\left(\frac{1}{q}-\frac{1}{r} \right)}
			\| \psi (\tau) \|_{L^{q}}
			+ e^{-\frac{t-\tau}{2}}\langle t-\tau \rangle^{\delta_r}
			\| \chi_{>1}(\nabla) \psi(\tau) \|_{H^{\beta-1}_r} \right)\,d\tau.
\end{align*}%
First, we have \eqref{eq3.5}.
Next, for the first term, we choose $q$ as
\begin{align*}%
	q = \begin{cases}
		\sigma_1 + \epsilon
		&\left(
			-\frac{n}{2}\left( \frac{1}{\sigma_1}-\frac{1}{r}\right) = -1
			\right),\\
		\sigma_1 &( \mbox{otherwise} )
		\end{cases}
\end{align*}%
with sufficiently small $\epsilon>0$.
Consequently, we have
\begin{align*}%
	I\!I\!I
	\lesssim \| \psi \|_{Y(T)}
		\begin{cases}
			1 &\big(  p\ge 1+\frac{2r}{n} \big),\\
			\langle T \rangle^{1-\frac{n}{2r}(p-1)} & \big(p< 1+\frac{2r}{n}\big),
		\end{cases}
\end{align*}%
which completes the proof.
\end{proof}

\begin{lemma}\label{lem_non}
Under the assumption of Theorem \ref{thm_lwp}, we have
\begin{align*}%
	\left\| \mathcal{N}(u) \right\|_{Y(T)}
	&\lesssim \left\| u \right\|_{X(T)}^p
\end{align*}%
for any $T>0$ and $u\in X(T)$.
\end{lemma}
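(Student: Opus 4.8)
The plan is to reduce every term in the $Y(T)$-norm of $\mathcal{N}(u)$ to an estimate of $u$ that is already encoded in $\|u\|_{X(T)}$, exploiting the pointwise bound $|\mathcal{N}(u)| \lesssim |u|^p$ (the case $l=0$, $v=0$ of \eqref{N}) for the Lebesgue norms and the full strength of \eqref{N} for the derivative. The first step I would record is the interpolation estimate
\[
	\|u(t)\|_{L^q} \lesssim \langle t \rangle^{-\frac{n}{2}\left(\frac{1}{r}-\frac{1}{q}\right)} \|u\|_{X(T)},
\]
valid for every $q$ in the admissible range $[r, \frac{2n}{n-2s}]$ (replaced by $[r,\infty)$ when $2s\ge n$). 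This follows from H\"older interpolation between $L^r$ and $L^2$ when $q\le 2$, and from the Gagliardo--Nirenberg inequality between $L^2$ and $\dot{H}^s$ when $q\ge 2$; in both cases the three time weights in \eqref{X} combine to exactly the stated power.

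For the Lebesgue part of \eqref{Y}/\eqref{Y1} I would then write $\|\mathcal{N}(u)(t)\|_{L^\gamma} \lesssim \|u(t)\|_{L^{p\gamma}}^p$ and check that $p\gamma \in [r,\frac{2n}{n-2s}]$ for all $\gamma \in [\sigma_1,\sigma_2]$. This is precisely where the choices \eqref{def_sigma_1}--\eqref{def_sigma_2} and the upper bound on $p$ in Theorem \ref{thm_lwp} enter: $p\sigma_1\ge r$ by definition of $\sigma_1$, while $p\sigma_2\le \frac{2n}{n-2s}$ follows from $\sigma_2$ and the constraint $p\le 1+\frac{\min\{n,2\}}{n-2s}$. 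Applying the interpolation estimate with $q=p\gamma$ yields $\|\mathcal{N}(u)(t)\|_{L^\gamma}\lesssim \langle t\rangle^{-\frac{n}{2}(\frac{p}{r}-\frac{1}{\gamma})}\|u\|_{X(T)}^p$, which matches the weight in \eqref{Y}/\eqref{Y1} exactly.

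The derivative term is the heart of the proof, and I would treat the two cases separately. For $s>1$ I would estimate $\||\nabla|^{s-1}\mathcal{N}(u)\|_{L^2}$ by the fractional chain rule, combined with the fractional Leibniz rule when $s-1>1$; this is legitimate because $[s]\le p_0$ forces $\mathcal{N}\in C^{[s]}$ and, by \eqref{N}, $|\mathcal{N}^{(j)}(u)|\lesssim |u|^{p-j}$ for $j\le [s]\le p$. The norm then splits into a sum of products of one factor carrying the $s-1$ derivatives of $u$ and $p-1$ undifferentiated factors; estimating the differentiated factor by Gagliardo--Nirenberg against $\|u\|_{L^2}$ and $\||\nabla|^s u\|_{L^2}$, and the remaining factors by the interpolation estimate of the first paragraph, all the time weights add up to $-\eta$. (The heuristic $\||\nabla|^{s-1}\mathcal{N}(u)\|_{L^2}\approx \||\nabla|^{s-1}u\|_{L^2}\,\|u\|_{L^\infty}^{p-1}$ already reproduces $\eta$, and the finite-exponent version is arranged to give the same power of $\langle t\rangle$.) For $0<s<1$ the cutoff $\chi_{>1}(\nabla)$ makes $|\nabla|^{s-1}\chi_{>1}(\nabla)$ a smoothing operator of negative order $s-1$, so by the Sobolev--Hardy--Littlewood--Sobolev mapping $L^{\gamma_*}\to L^2$ with $\frac{1}{\gamma_*}=\frac{1}{2}+\frac{1-s}{n}$ it suffices to bound $\|\mathcal{N}(u)\|_{L^{\gamma_*}}$; one checks $\gamma_*\in[\sigma_1,\sigma_2]$ (again using $p\le 1+\frac{\min\{n,2\}}{n-2s}$) and reuses the Lebesgue estimate above, whose weight at $\gamma=\gamma_*$ is exactly $\langle t\rangle^{-\eta}$.

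The main obstacle I anticipate is the fractional chain rule for $s>1$ when $p$ is close to $1$: one must ensure that every H\"older exponent appearing in the (possibly high-order) chain and Leibniz expansion lies strictly between $1$ and $\infty$, and that each intermediate $L^q$-norm of $u$ stays inside $[r,\frac{2n}{n-2s}]$ so that the interpolation estimate applies and the time-decay powers genuinely telescope to $-\eta$. This is exactly the place where the smoothness hypothesis $[s]\le p_0$, the growth condition \eqref{N}, and the upper restriction on $p$ are all needed simultaneously.
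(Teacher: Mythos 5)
Your treatment of the Lebesgue part and of the case $s>1$ follows the paper's proof (interpolation of $\|u\|_{L^{p\gamma}}$ against the three components of the $X(T)$-norm, then Fa\`a di Bruno/fractional Leibniz/fractional chain rule with a careful choice of H\"older exponents, which the paper defers to Appendix~\ref{appendixb}). The gap is in your case $0<s<1$: you fix the \emph{sharp} Sobolev exponent $\gamma_*$ with $\frac{1}{\gamma_*}=\frac12+\frac{1-s}{n}$ and claim $\gamma_*\in[\sigma_1,\sigma_2]$, but the inequality $\gamma_*\ge\sigma_1=\max\{1,\frac{r}{p}\}$ is exactly the condition $p\gamma_*\ge r$, i.e. $p\ge \frac{r}{2}+\frac{r(1-s)}{n}$, and Theorem~\ref{thm_lwp} imposes no lower bound on $p$ beyond $p>1$ (see Remark~\ref{rem_r}(ii)). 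Concretely, take $n=2$, $s=0$, $r=2$, $p=\frac32$ (all hypotheses of Theorem~\ref{thm_lwp} hold): then $\gamma_*=1$ while $\sigma_1=\sigma_2=\frac43$, so $p\gamma_*=\frac32<r=2$ and $\|u\|_{L^{p\gamma_*}}$ is \emph{not} controlled by $\|u\|_{X(T)}$, which only dominates $L^q$ for $q\in[r,\frac{2n}{n-2s}]$. Your reduction therefore breaks down precisely for small $p$, and there is the secondary defect that $\gamma_*<1$ when $n=1$ and $s<\frac12$.

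The repair is the one the paper uses: since $\chi_{>1}(\nabla)$ localizes to high frequency, $|\nabla|^{s-1}\chi_{>1}(\nabla)$ maps $L^{\rho}\to L^2$ for the \emph{whole range} $\max\{1,\frac{2n}{n-2s+2}\}\le\rho\le 2$ (one only needs the non-sharp embedding $\|f\|_{L^2}\lesssim\|\langle\nabla\rangle^{1-s}f\|_{L^{\rho}}$, i.e. $\frac{1}{\rho}-\frac12\le\frac{1-s}{n}$), and for every such $\rho$ the resulting time weight $\langle t\rangle^{-\frac{n}{2}(\frac{p}{r}-\frac{1}{\rho})}$ is at least as strong as $\langle t\rangle^{-\eta}$. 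One then \emph{selects} $\rho$ inside this interval so that $r\le\rho p\le\frac{2n}{n-2s}$; the covering statement
\begin{align*}
	\left(1,\min\Big\{1+\tfrac{2}{n-2s},\tfrac{2n}{n-2s}\Big\}\right]
	\subset\bigcup_{\max\{1,\frac{2n}{n-2s+2}\}\le\rho\le 2}\left[\tfrac{r}{\rho},\tfrac{2n}{\rho(n-2s)}\right]
\end{align*}
guarantees such a $\rho$ exists for every admissible $p$ (in the example above, $\rho=\frac43$ works). Without this extra degree of freedom your argument does not close.
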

\begin{proof}
Let $\gamma \in [\sigma_1, \sigma_2]$.
Then, by the assumption \eqref{N},
we have
$|\mathcal{N}(u)| \lesssim |u|^{p}$
and we calculate
\begin{align*}%
	\langle t \rangle^{\frac{n}{2}\left( \frac{p}{r} - \frac{1}{\gamma} \right)}
	\| \mathcal{N}(u) \|_{L^{\gamma}}
	&\lesssim \left( \langle t \rangle^{\frac{n}{2}\left( \frac{1}{r} - \frac{1}{p\gamma} \right)}
			\| u \|_{L^{p \gamma}} \right)^p.
\end{align*}%
By the definition of $\sigma_1$ and $\sigma_2$,
we see that
\begin{align*}%
	r \le p \sigma_1 \quad \mbox{and}\quad
	p \sigma_2 \begin{cases} < \infty &(2s\ge n),\\
						\le \frac{2n}{n-2s} &(2s < n).
				\end{cases}		
\end{align*}%
This and the interpolation
between $L^r$ and $L^2$ in the case that $p\gamma \in [r,2]$
and between $\dot{H}^s$ and $L^2$ in other case imply
$\langle t \rangle^{\frac{n}{2}\left( \frac{1}{r} - \frac{1}{p\gamma} \right)}
\| u \|_{L^{p \gamma}} \le \| u \|_{X(T)}$
 (see also \cite[Lemma 2.3]{HaKaNa04DIE} or \cite[Lemma 2.3]{IkeInWa17}),
which leads to
\begin{align*}%
	\langle t \rangle^{\frac{n}{2}\left( \frac{p}{r} - \frac{1}{\gamma} \right)}
	\| \mathcal{N}(u) \|_{L^{\gamma}}
	&\lesssim \| u \|_{X(T)}^p.
\end{align*}%

Next, we prove the following inequalities.
\begin{align*}%
\begin{array}{ll}
	\langle t \rangle^{\eta} \| |\nabla|^{s-1} \mathcal{N}(u) \|_{L^2}
	\lesssim \| u \|_{X(T)}^p 
	& (s > 1),\quad
	\\
	\langle t \rangle^{\eta} \| |\nabla|^{s-1} \chi_{>1}(\nabla) \mathcal{N}(u) \|_{L^2}
	\lesssim \| u \|_{X(T)}^p 
	& (0\le s \le 1).
\end{array}
\end{align*}%
First, we consider the case of $0 \le s \le 1$. 
Now, we have
\begin{align*}%
	\| |\nabla|^{s-1} \chi_{>1}(\nabla) \mathcal{N}(u(t)) \|_{L^2}
	&\lesssim \| \mathcal{N}(u(t)) \|_{L^{\rho}}
	\lesssim \| u(t) \|_{L^{\rho p}}^p,
\end{align*}%
provided that $\rho$ satisfies
\begin{align}%
\label{rho}
	\max \left\{  1, \frac{2n}{n-2s+2} \right\} \le \rho  \le 2,
\end{align}%
since
$\| |\nabla|^{s-1} \chi_{>1}(\nabla) f \|_{L^2} \sim \|  \left\langle \nabla \right\rangle^{s-1} f \|_{L^2}$
and
$\| f \|_{L^2} \lesssim \| \left\langle \nabla \right\rangle^{1-s} f \|_{L^{\rho}}$
holds for
$\rho$
satisfying \eqref{rho} by the Sobolev embedding.
Since $1< p \le \min \{ 1+ \frac{2}{n-2s}, \frac{2n}{n-2s} \}$,
there exists $\rho$ satisfying \eqref{rho} such that 
\begin{align*}%
	r \le \rho p \le \frac{2n}{n-2s}.
\end{align*}%
In fact, the family of intervals
$\{ [r/\rho, 2n/\rho(n-2s)]\}_{\max \left\{  1, \frac{2n}{n-2s+2} \right\} \le \rho  \le 2}$
covers the interval 
$(1,\min\{1+ 2/(n-2s), 2n/(n-2s)\}]$.
Namely, 
\begin{align*}%
	\left( 1, \min \left\{ 1+ \frac{2}{n-2s}, \frac{2n}{n-2s} \right\}\right]
	\subset \mathop{\bigcup}_{\max \left\{  1, \frac{2n}{n-2s+2} \right\} \le \rho  \le 2} 
	\left[ \frac{r}{\rho}, \frac{2n}{\rho (n-2s)} \right]	.
\end{align*}%
Then, we obtain 
\begin{align*}%
	\| u(t) \|_{L^{\rho p}}^p
	\lesssim \left( \langle t \rangle^{-\frac{n}{2} \left( \frac{1}{\rho p} - \frac{1}{2} \right) }
		\| u \|_{X(T)} \right)^p.
\end{align*}%

Next, we consider the case of $s>1$.
Let $\tilde{s}$ be the fractional part of $s$, namely, $\tilde{s} := s - [s]$.
Using the Fa\'a di Bruno formula
(see the proof of Lemma 2.5 in \cite{IkeInWa17}),
the fractional Leibniz rule, and the fractional chain rule
(see for example \cite[Proposition 3.1, 3.3]{CW}), we see that
\begin{align*}
	& \| |\nabla|^{s-1} \mathcal{N} (u) \| _{L^2}
	= \| |\nabla|^{\tilde{s}} |\nabla|^{[s]-1} \mathcal{N} (u) \| _{L^2} \\
	& \lesssim \| |u|^{p-[s]} \| _{L^{q_0}}
		\sum _{\substack{k = (k_1, \dots , k_{[s]}) \in \mathbb{Z}^{[s]}_{\ge 0} \\ |k| = [s]-1}}
		\| |\nabla|^{k_1+\tilde{s}} u\|_{L^{q_1(k)}}
		\prod _{j=2}^{[s]} \| |\nabla|^{k_j} u \| _{L^{q_j(k)}} \\
	& \lesssim \| u \| _{L^{(p-[s])q_0}}^{p-[s]}
	\sum _{\substack{k = (k_1, \dots , k_{[s]}) \in \mathbb{Z}^{[s]}_{\ge 0} \\ |k| = [s]-1}}
	\| |\nabla|^{k_1+\tilde{s}+n (\frac{1}{2} - \frac{1}{q_1(k)})} u\|_{L^2}
	\prod _{j=2}^{[s]} \| |\nabla|^{k_j+n(\frac{1}{2}-\frac{1}{q_j(k)})} u \| _{L^2},
\end{align*}
where $q_0$ and $q_j(k) \ (j = 1,2, \ldots,[s])$ are given so that 
\begin{align}
\label{cond}
\left\{
\begin{array}{l}
	\frac{1}{2} = \frac{1}{q_0} + \frac{1}{q_1(k)} + \cdots + \frac{1}{q_{[s]}(k)},\\
	2 < q_j (k) <\infty \text{ for } j=1,2,\ldots,[s],\\
	\frac{r}{p-[s]} \le q_0 
		\begin{cases}
			< \infty &\ \text{if }\ 2s \ge n,\\
			\le \frac{2n}{(p-[s])(n-2s)} &\ \text{if }\ 2s<n,
		\end{cases} \\
	k_1+\tilde{s}+n \left( \frac{1}{2} - \frac{1}{q_1(k)} \right)
	\le s,\\
	k_j+n \left( \frac{1}{2}-\frac{1}{q_j(k)} \right)
	\le s \ \text{ for } \ j =2, 3, \ldots, [s].
\end{array}
\right.
\end{align}
We postpone the proof of the existence of such exponents to Appendix \ref{appendixb}.
We also note that when $[s]=1$, the above inequality is interpreted as
\begin{align*}
	\| |\nabla|^{s-1} \mathcal{N} (u) \| _{L^2}
	\lesssim 
	\| u \| _{L^{(p-[s])q_0}}^{p-[s]} 
	\| |\nabla|^{\tilde{s}+n (\frac{1}{2} - \frac{1}{q_1(k)})} u\|_{L^2}.
\end{align*}
Finally, by the interpolation, we obtain 
\begin{align*}%
	\langle t \rangle^{\eta} \| |\nabla|^{s-1} \mathcal{N}(u) \|_{L^2}
	&\lesssim \| u \|_{X(T)}^p.
\end{align*}%
The proof is complete.
\end{proof}

Now we prove the local existence of the solution.
\begin{proof}[Proof of Theorem \ref{thm_lwp}]
Let $T \in (0,1]$.
First, we note that
Theorem \ref{thm_lplq} implies
\begin{align}%
\label{est_lin_pt}
	\| (\partial_t + 1) \mathcal{D}(t) \varepsilon u_0
		+ \mathcal{D}(t) \varepsilon u_1 \|_{X(T)}
	&\le C_0 \varepsilon ( \| u_0 \|_{H^s \cap H^{\beta}_r} + \|u_1 \|_{H^{s-1}\cap L^r} )
\end{align}%
with some constant
$C_ 0 = C_0(n,s,r) > 0$,
which is independent of $T$.
We put
\begin{align}%
\label{mep}
	M(\varepsilon)
	:= 2  C_0 \varepsilon ( \| u_0 \|_{H^s \cap H^{\beta}_r} + \|u_1 \|_{H^{s-1}\cap L^r} )
\end{align}%
and consider $X(T,M(\varepsilon))$.
For
$u \in X(T,M(\varepsilon))$,
we define a mapping
\begin{align}%
\label{psi}
	\Psi (u) (t) &:=
	(\partial_t + 1) \mathcal{D}(t) \varepsilon u_0
		+ \mathcal{D}(t) \varepsilon u_1
		+ \int_0^t \mathcal{D}(t-\tau) \mathcal{N}(u(\tau)) d\tau.
\end{align}%
For $u,v \in X(T,M(\varepsilon))$, by Lemmas \ref{lem_duh} and \ref{lem_non},
we have
\begin{align}%
\label{psi_bdd}
	\| \Psi (u) \|_{X(T)}
	&\le \frac{M(\varepsilon)}{2} + C_1 T M(\varepsilon)^{p}
\end{align}%
with some constant $C_1=C_1(n,s,r,p)>0$.
Moreover, we have
\begin{align}%
\label{psi_dif}
	\| \Psi(u) - \Psi(v) \|_{Z(T)}
	&\le C_2 T (2M(\varepsilon))^{p-1} \| u - v \|_{Z(T)}
\end{align}%
with some constant
$C_2=C_2(n,s,r,p)>0$.
Indeed, we put
\begin{align}%
\label{eq3.15}
	\frac{1}{q} 
	&= \begin{cases}
		\displaystyle \delta &(2s \ge n),\\
		\displaystyle \left( \frac{2n}{(p-1)(n-2s)} \right)^{-1} &(2s < n),
	\end{cases} \\
\label{eq3.14}
	\frac{1}{\gamma} 
	&= \frac{1}{q} + \frac12,
\end{align}%
with sufficiently small $\delta > 0$.
We note that the condition
$p \le 1 + \frac{n}{n-2s}$ if $2s < n$ implies
$\gamma \ge 1$.
Then, we calculate
\begin{align}%
\label{eq3.16}
	&\left\| \int_0^t \mathcal{D}(t-\tau) \left( \mathcal{N}(u(\tau)) - \mathcal{N}(v(\tau)) \right)
			\,d\tau \right\|_{L^2} \\
\notag
	&\lesssim \int_0^t \left(
			\langle t -\tau \rangle^{-\frac{n}{2}\left(\frac{1}{\gamma}-\frac12\right)}
			\left\| \mathcal{N}(u(\tau)) - \mathcal{N}(v(\tau)) \right\|_{L^{\gamma}}
			\right. \\
\notag
	&\qquad\qquad  \left.
			+ e^{-\frac{t-\tau}{2}}\langle t-\tau \rangle^{\delta_2}
			\| \langle \nabla \rangle^{-1}
				(\mathcal{N}(u(\tau)) - \mathcal{N}(v(\tau))) \|_{L^2}\right)\,d\tau \\
\notag
	&\lesssim \int_0^t
		\left\| \mathcal{N}(u(\tau)) - \mathcal{N}(v(\tau)) \right\|_{L^{\gamma}}\, d\tau \\
\notag
	&\lesssim \int_0^t
		\| u(\tau) - v(\tau) \|_{L^2}
		( \| u(\tau) \|_{L^{q(p-1)}} + \| v(\tau) \|_{L^{q(p-1)}} )^{p-1} \,d\tau\\
\notag
	&\lesssim T \| u - v\|_{Z(T)} ( \| u \|_{X(T)} + \|v \|_{X(T)} )^{p-1}.
\end{align}%
Here, we have used the embedding
\begin{align*}%
	\| \phi \|_{L^2} &\lesssim \| \langle \nabla \rangle \phi \|_{L^{\gamma}}
\end{align*}%
with noting that
$\frac12 \ge \frac{1}{\gamma} - \frac{1}{n}$,
for the second inequality.
Moreover, we have also used the H\"{o}lder inequality with the relation
$\frac{1}{\gamma} = \frac12 + \frac{1}{q}$.

Therefore, from \eqref{psi_bdd} and \eqref{psi_dif},
taking $T>0$ sufficiently small depending on
$C_1, C_2$, and $M(\varepsilon)$,
we see that
$\Psi$ is a contraction mapping in $X(T,M(\varepsilon))$ with the metric of $Z(T)$,
and it has a unique fixed point $u$,
which is a $H^s\cap L^r$-mild solution of \eqref{dw}.

We show that $ u \in C([0,T) ; H^{s}(\mathbb{R}^n) \cap L^r(\mathbb{R}^n))$. 
The solution $u$ satisfies the integral equation
\begin{align*}%
	u (t) &=
	(\partial_t + 1) \mathcal{D}(t) \varepsilon u_0
		+ \mathcal{D}(t) \varepsilon u_1
		+ \int_0^t \mathcal{D}(t-\tau) \mathcal{N}(u(\tau)) d\tau.
\end{align*}%
Since the linear part of the solution obviously satisfies this property,
it suffices to show that
\begin{align*}
	\int_0^t \mathcal{D}(t-\tau) \mathcal{N}(u(\tau)) \,d\tau
	\in C([0,T) ; H^{s}(\mathbb{R}^n) \cap L^r(\mathbb{R}^n)).
\end{align*}
By Theorem \ref{thm_lplq}, we have
\begin{align*}
	\| |\nabla|^s \mathcal{D}(t-\tau) \mathcal{N}(u(\tau)) \|_{L^2}
	&\lesssim \| \chi_{\le 1}(\nabla) \mathcal{N}(u(\tau)) \|_{L^{\sigma_2}}
		+ \| |\nabla|^{s-1} \chi_{> 1}(\nabla) \mathcal{N}(u(\tau)) \|_{L^2},\\
	\| \mathcal{D}(t-\tau) \mathcal{N}(u(\tau)) \|_{L^2}
	&\lesssim \| \chi_{\le 1}(\nabla) \mathcal{N}(u(\tau)) \|_{L^{\sigma_2}}
		+ \| |\nabla|^{-1} \chi_{> 1}(\nabla) \mathcal{N}(u(\tau)) \|_{L^2},\\
	\| \mathcal{D}(t-\tau) \mathcal{N}(u(\tau)) \|_{L^r}
	&\lesssim \| \chi_{\le 1}(\nabla) \mathcal{N}(u(\tau)) \|_{L^{\sigma_1}}
		+ \| \chi_{> 1}(\nabla) \mathcal{N}(u(\tau)) \|_{H_{r}^{\beta}}
\end{align*}
for $t \in [0,T)$ and $\tau \in [0,t]$ and  the right-hand sides are bounded independently of $t$.
Therefore, we can apply the dominated convergence theorem in the Bochner integral
and thus the continuity holds.

We also prove that
$\partial_t u \in C([0,T) ; H^{s-1}(\mathbb{R}^n))$.
Since the linear part of the solution obviously satisfies this property,
it suffices to show that
\begin{align}
\label{pt_int_dt_N}
	\partial_t \int_0^t \mathcal{D}(t-\tau) \mathcal{N}(u(\tau)) \,d\tau
	\in C([0,T) ; H^{s-1}(\mathbb{R}^n)).
\end{align}
We note that
$\| \mathcal{N}(u) \|_{Y(T)}$ is bounded.
This and Theorem \ref{thm_lplq} implies
\begin{align*}
	\| |\nabla|^s \mathcal{D}(t-\tau) \mathcal{N}(u(\tau)) \|_{L^2}
	&\lesssim \| \chi_{\le 1}(\nabla) \mathcal{N}(u(\tau)) \|_{L^{\sigma_2}}
		+ \| |\nabla|^{s-1} \chi_{> 1}(\nabla) \mathcal{N}(u(\tau)) \|_{L^2},\\
	\| \mathcal{D}(t-\tau) \mathcal{N}(u(\tau)) \|_{L^2}
	&\lesssim \| \chi_{\le 1}(\nabla) \mathcal{N}(u(\tau)) \|_{L^{\sigma_2}}
		+ \| |\nabla|^{-1} \chi_{> 1}(\nabla) \mathcal{N}(u(\tau)) \|_{L^2}
\end{align*}
for $t \in [0,T)$ and $\tau \in [0,t]$,
and the right-hand sides are bounded.
Therefore, for any fixed $t \in [0, T)$,
$\mathcal{D}(t-\cdot) \mathcal{N}(u(\cdot)) \in L^{\infty} (0,t ; H^s(\mathbb{R}^n))$
holds.
Moreover, this also leads to
$\partial_t \mathcal{D}(t-\cdot) \mathcal{N}(u(\cdot))
\in L^{\infty}(0,t ; H^{s-1}(\mathbb{R}^n))$.
Indeed, by \eqref{lplqdt} in Theorem \ref{thm_lplq}, we have
\begin{align*}
	\| |\nabla|^{s-1} \partial_t \mathcal{D}(t-\tau) \mathcal{N}(u(\tau)) \|_{L^2}
	&\lesssim \| \chi_{\le 1}(\nabla) \mathcal{N}(u(\tau)) \|_{L^{\sigma_2}}
		+ \| |\nabla|^{s-1} \chi_{> 1}(\nabla) \mathcal{N}(u(\tau)) \|_{L^2},\\
	\|\partial_t \mathcal{D}(t-\tau) \mathcal{N}(u(\tau)) \|_{L^2}
	&\lesssim \| \chi_{\le 1}(\nabla) \mathcal{N}(u(\tau)) \|_{L^{\sigma_2}}
		+ \| |\nabla|^{-1} \chi_{> 1}(\nabla) \mathcal{N}(u(\tau)) \|_{L^2}
\end{align*}
for $t \in [0,T)$ and $\tau \in [0,t]$,
and the right-hand sides are bounded independently of $t$.
Therefore, by the Lebesgue convergence theorem in the Bochner integral 
and $\mathcal{D}(0)=0$,
we see that
\begin{align*}
	\partial_t \int_0^t \mathcal{D}(t-\tau) \mathcal{N}(u(\tau)) \,d\tau
	= \int_0^t \partial_t \mathcal{D}(t-\tau) \mathcal{N}(u(\tau)) \,d\tau,
\end{align*}
which implies \eqref{pt_int_dt_N}.

Next, we show that under the assumptions of Theorem \ref{thm_lwp},
for any fixed $T_0>0$,
the $H^s$-mild solution on the interval $[0,T_0)$ is unique.
This also implies the uniqueness of $H^s\cap L^r$-mild solution,
because an $H^s\cap L^r$-mild solution is also an $H^s$-mild solution.
Let $T_0 >0$ and fix it, and let
$u, v$
be $H^s$-mild solutions of \eqref{dw} with same initial data
$\varepsilon(u_0, u_1) \in H^s(\mathbb{R}^n) \times H^{s-1}(\mathbb{R}^n)$.
Let $T_1 \in (0,T_0)$ be an arbitrary number.
We define
\begin{align*}%
	\| \phi \|_{X_2(T_1)}
	:= \sup_{0<t<T_1}\left\{
	\langle t \rangle^{\frac{s}{2}} \| |\nabla|^s \phi (t) \|_{L^2}
		+ \| \phi(t) \|_{L^2} \right\}.
\end{align*}%
Then, there exists a constant $M>0$ such that
$\| u \|_{X_2(T_1)}+\|v\|_{X_2(T_1)} \le M$.
From this and the same argument as deriving \eqref{psi_dif} with $r=2$,
we can see that
\begin{align*}%
	\| u - v \|_{Z(T)}
	\le C_3 M^{p-1} \int_0^{T} \| u - v \|_{Z(\tau)}\,d\tau
\end{align*}%
for any $T \in [0,T_1]$
with some constant $C_3 = C_3(n,s,p)>0$.
Indeed, for $t \in [0,T]$, we can obtain
\begin{align*}%
	&\left\| \int_0^t \mathcal{D}(t-\tau) \left( \mathcal{N}(u(\tau)) - \mathcal{N}(v(\tau)) \right)
			\,d\tau \right\|_{L^2} \\
	&\lesssim \int_0^t
		\| u(\tau) - v(\tau) \|_{L^2}
		( \| u(\tau) \|_{L^{q(p-1)}} + \| v(\tau) \|_{L^{q(p-1)}} )^{p-1} \,d\tau\\
	&\lesssim  ( \| u \|_{X_2(T)} + \|v \|_{X_2(T)} )^{p-1} \int_0^t \| u - v\|_{Z(\tau)} d\tau \\
	&\lesssim  M^{p-1} \int_0^t \| u - v\|_{Z(\tau)} d\tau
\end{align*}%
in the same manner as \eqref{eq3.16},
where $q$ and $\gamma$ are defined in \eqref{eq3.15} and \eqref{eq3.14}.
Thus, the Gronwall inequality implies
$u \equiv v$
on
$[0,T_1]$.
Since $T_1 \in (0,T_0)$ is arbitrary,
we have $u \equiv v$ on $[0,T_0)$.

We next prove the locally Lipschitz continuity of the solution map
\begin{align*}%
	(H^s(\mathbb{R}^n)\cap H_{r}^{\beta}(\mathbb{R}^n))
	\times (H^{s-1}(\mathbb{R}^n) \cap L^r(\mathbb{R}^n))
		&\to C([0,T) ; L^2(\mathbb{R}^n)),\\
	\varepsilon(u_0, u_1) &\mapsto u.
\end{align*}%
Let $M > 0$ and we consider the ball
\begin{align*}%
	B(M) := \{ &\varepsilon (u_0, u_1) \in (H^s(\mathbb{R}^n)\cap H_{r}^{\beta}(\mathbb{R}^n))
	\times (H^{s-1}(\mathbb{R}^n) \cap L^r(\mathbb{R}^n)) ; \\
	& \| \varepsilon (u_0, u_1) \|_{(H^s\cap H_r^{\beta})\times (H^{s-1}\cap L^r)}
		\le M \},
\end{align*}%
for the initial data.
Then, by the proof of the existence part above, we find $T>0$ depending only on $M$
such that for each $\varepsilon (u_0, u_1)$, there exists a unique solution
$u \in X(T, 2M)$.
Let $\varepsilon (u_0, u_1)$, $\varepsilon (v_0, v_1) \in B(M)$ and
let $u$, $v$ be the associated solutions, respectively.
From this and the same argument before, we see that
\begin{align*}%
	\| u - v \|_{Z(T_1)}
	&\lesssim \| \varepsilon(u_0- v_0)\|_{H^s \cap H_r^{\beta}}
		+ \| \varepsilon(u_1-v_1) \|_{H^{s-1}\cap L^r}\\
	&\quad + (2M)^{p-1} \int_0^{T_1} \| u - v \|_{Z(\tau)}\,d\tau.
\end{align*}%
Therefore, the Gronwall inequality implies
\begin{align*}%
	\| u - v \|_{Z(T_1)}
	&\lesssim  \| \varepsilon(u_0- v_0) \|_{H^s \cap H_r^{\beta}}
		+ \| \varepsilon(u_1-v_1) \|_{H^{s-1}\cap L^r},
\end{align*}%
which shows the locally Lipschitz continuity of the solution map.

Finally, we prove the blow-up alternative for $H^s$-mild solution, namely,
$T_2(\varepsilon) < \infty$ implies \eqref{bu}.
Let us suppose $T_2(\varepsilon) < \infty$ and
\begin{align*}
	\liminf_{t\to T_2(\varepsilon)} \| (u, \partial_t u) (t) \|_{H^s\times H^{s-1}} < \infty.
\end{align*}
Then, there exist a constant $M>0$ and a sequence
$\{ t_m \}_{m=1}^{\infty} \subset [0,T_2(\varepsilon))$
such that
$t_m \to T_2(\varepsilon) \ (m\to \infty)$
and
\begin{align*}
	\| (u, \partial_t u) (t_m) \|_{H^s \times H^{s-1}} \le M \quad ( m \ge 1).
\end{align*}
We note that, from the above proof of the local existence of the $H^s$-mild solution
in the case $r=2$,
we deduce that there exists $T_1>0$ independent of $\{ t_m \}_{m=1}^{\infty}$
such that we can construct the solution
\begin{align*}
	u \in C([t_m, t_m+T_1) ; H^s(\mathbb{R}^n)),\quad
	\partial_t u \in C([t_m, t_m+ T_1) ; H^{s-1}(\mathbb{R}^n)).
\end{align*}
However, letting $m \to \infty$,
this contradicts the definition of
the lifespan $T_2(\varepsilon)$.
This completes the proof.
\end{proof}

\begin{proof}[Proof of Theorem \ref{thm_gwp}]
Let $p \ge 1+ \frac{2r}{n}$. 
Let $T>0$ be an arbitrary finite number.
We define $M(\varepsilon)$ by \eqref{mep} and consider the mapping \eqref{psi}
on
$X(T, M(\varepsilon))$.
Then, applying Lemmas \ref{lem_duh} and \ref{lem_non}, we have
for $u, v \in X(T, M(\varepsilon))$,
\begin{align}
\label{eq3.18}
	\| \Psi (u) \|_{X(T)} &\le \frac{M(\varepsilon)}{2} + C_1 M(\varepsilon)^p,\\
\notag
	\| \Psi (u) - \Psi (v) \|_{Z(T)} &\le
		C_2 (2M(\varepsilon))^{p-1} \| u - v \|_{Z(T)}
\end{align}
with some constants $C_1, C_2 > 0$ independent of $T$,
instead of \eqref{psi_bdd} and \eqref{psi_dif}, respectively.
Indeed, the first estimate is a direct consequence of Lemmas 3.1 and 3.2.
The second estimate is obtained by a similar way to the proof of (3.13).
More precisely, we have
\begin{align*}%
	&\left\| \int_0^t \mathcal{D}(t-\tau)
			\left( \mathcal{N}(u(\tau)) - \mathcal{N}(v(\tau)) \right) \, d\tau \right\|_{L^2}\\
	&\lesssim
		\int_0^t \left(
			\langle t - \tau \rangle^{-\frac{n}{2}\left( \frac{1}{\gamma}-\frac{1}{2} \right) }
				\| \mathcal{N}(u(\tau)) - \mathcal{N}(v(\tau)) \|_{L^{\gamma}} \right. \\
		&\left. \qquad \qquad
			+ e^{-\frac{t-\tau}{2}}\langle t-\tau \rangle^{\delta_2}
				\| \langle \nabla \rangle^{-1}
					\left( \mathcal{N}(u(\tau)) - \mathcal{N}(v(\tau)) \right) \|_{L^2} \right)\,d\tau \\
	&\lesssim
		\int_0^t \langle t - \tau \rangle^{-\frac{n}{2}\left( \frac{1}{\gamma}-\frac{1}{2} \right) }
			\| \mathcal{N}(u(\tau)) - \mathcal{N}(v(\tau)) \|_{L^{\gamma}} \, d\tau \\
	&\lesssim
		\int_0^t \langle t - \tau \rangle^{-\frac{n}{2}\left( \frac{1}{\gamma}-\frac{1}{2} \right) }
			\| u(\tau) - v(\tau) \|_{L^2}
				\left( \| u(\tau) \|_{L^{q(p-1)}} + \| v(\tau) \|_{L^{q(p-1)}} \right)^{p-1}
				 \, d\tau \\
	&\lesssim
		\int_0^t \langle t - \tau \rangle^{-\frac{n}{2}\left( \frac{1}{\gamma}-\frac{1}{2} \right) }
			\langle \tau \rangle^{-\frac{n}{2} \left( \frac{1}{r} - \frac{1}{q(p-1)} \right)(p-1)}
			\, d\tau
			\cdot \| u - v \|_{Z(T)} \left( \| u \|_{X(T)} + \| v \|_{X(T)} \right)^{p-1},
\end{align*}%
where
$\gamma$ and $q$ are defined by \eqref{eq3.14} and \eqref{eq3.15}
and in the last inequality we have used the Sobolev inequality with
$s' = n \left( \frac{1}{2} - \frac{1}{q(p-1)} \right)$
and the interpolation inequality
to obtain
\begin{align*}%
	&\langle \tau \rangle^{\frac{n}{2} \left( \frac{1}{r} - \frac{1}{q(p-1)} \right)}
		\| u (\tau) \|_{L^{q(p-1)}} \\
	&\le \langle \tau \rangle^{\frac{n}{2} \left( \frac{1}{r} - \frac{1}{q(p-1)} \right)}
		\| |\nabla|^{s'} u(\tau) \|_{L^2} \\
	&= \langle \tau \rangle^{\frac{n}{2} \left( \frac{1}{r} - \frac{1}{2} \right) + \frac{s'}{2}}
		\| |\nabla|^{s'} u(\tau) \|_{L^2} \\
	&= \left( \langle \tau \rangle^{\frac{n}{2} \left( \frac{1}{r} - \frac{1}{2} \right)}
		\| u(\tau) \|_{L^2} \right)^{1-\frac{s'}{s}}
		\left( \langle \tau \rangle^{\frac{n}{2} \left( \frac{1}{r} - \frac{1}{2} \right)+\frac{s}{2}}
		\| |\nabla|^s u(\tau) \|_{L^2} \right)^{\frac{s'}{s}} \\
	&\le \| u(\tau) \|_{X(T)}.
\end{align*}%
Here we also note that the definition of $q$ implies
$0\le s' \le s$.
Therefore, it suffices to show
\begin{align}%
\label{thm14diff}
	\int_0^t \langle t - \tau \rangle^{-\frac{n}{2}\left( \frac{1}{\gamma}-\frac{1}{2} \right) }
			\langle \tau \rangle^{-\frac{n}{2} \left( \frac{1}{r} - \frac{1}{q(p-1)} \right)(p-1)}
			\, d\tau
	\lesssim 1
\end{align}%
holds under the condition
$p \ge 1+ \frac{2r}{n}$.
To prove \eqref{thm14diff}, we divide the integral into
\begin{align*}%
	&\int_0^t \langle t - \tau \rangle^{-\frac{n}{2}\left( \frac{1}{\gamma}-\frac{1}{2} \right) }
			\langle \tau \rangle^{-\frac{n}{2} \left( \frac{1}{r} - \frac{1}{q(p-1)} \right)(p-1)}
			\, d\tau \\
	&= \int_0^{t/2} \langle t - \tau \rangle^{-\frac{n}{2}\left( \frac{1}{\gamma}-\frac{1}{2} \right) }
			\langle \tau \rangle^{-\frac{n}{2} \left( \frac{1}{r} - \frac{1}{q(p-1)} \right)(p-1)}
			\, d\tau \\
	&\quad + \int_{t/2}^t \langle t - \tau \rangle^{-\frac{n}{2}\left( \frac{1}{\gamma}-\frac{1}{2} \right) }
			\langle \tau \rangle^{-\frac{n}{2} \left( \frac{1}{r} - \frac{1}{q(p-1)} \right)(p-1)}
			\, d\tau \\
	&=: A + B.
\end{align*}%
The term $A$ is estimated as
\begin{align*}%
	A&\lesssim \langle t \rangle^{-\frac{n}{2}\left( \frac{1}{\gamma}-\frac{1}{2} \right) }
		\int_0^{t/2} \langle \tau \rangle^{-\frac{n}{2} \left( \frac{1}{r} - \frac{1}{q(p-1)} \right)(p-1)}
			\, d\tau.
\end{align*}%
By noting that
$-\frac{n}{2}\left( \frac{1}{\gamma}-\frac{1}{2} \right) <0$,
if
$-\frac{n}{2} \left( \frac{1}{r} - \frac{1}{q(p-1)} \right)(p-1) \le -1$,
then we immediately have $A \lesssim 1$.
Otherwise, we also easily compute
\begin{align*}%
	A &\lesssim \langle t \rangle^{-\frac{n}{2}\left( \frac{1}{\gamma}-\frac{1}{2} \right) }
		\langle t \rangle^{-\frac{n}{2} \left( \frac{1}{r} - \frac{1}{q(p-1)} \right)(p-1) + 1} \\
	&= \langle t \rangle^{-\frac{n}{2r}(p-1)+1} \\
	&\lesssim 1,
\end{align*}%
since $\frac{1}{\gamma} = \frac{1}{q} + \frac{1}{2}$ and $p \ge 1+\frac{2r}{n}$.
Next, the term $B$ is estimated as
\begin{align*}%
	B&\lesssim
		\langle t \rangle^{-\frac{n}{2} \left( \frac{1}{r} - \frac{1}{q(p-1)} \right)(p-1)}
		\int_{t/2}^t \langle t - \tau \rangle^{-\frac{n}{2}\left( \frac{1}{\gamma}-\frac{1}{2} \right) }
			\, d\tau.
\end{align*}%
When $2s\ge n$,
$\frac{1}{\gamma} = \frac{1}{2} + \delta$
with sufficiently small
$\delta$,
and hence,
$-\frac{n}{2}\left( \frac{1}{\gamma}-\frac{1}{2} \right) > -1$
and we have
\begin{align*}%
	B &\lesssim
		\langle t \rangle^{-\frac{n}{2} \left( \frac{1}{r} - \frac{1}{q(p-1)} \right)(p-1)}
		\langle t \rangle^{-\frac{n}{2}\left( \frac{1}{\gamma}-\frac{1}{2} \right) +1 } \\
		&= \langle t \rangle^{-\frac{n}{2r}(p-1)+1} \\
	&\lesssim 1,
\end{align*}%
since $\frac{1}{\gamma} = \frac{1}{q} + \frac{1}{2}$ and $p \ge 1+\frac{2r}{n}$.
When $2s<n$, the definition of $\gamma$ is
$\frac{1}{\gamma} = \left( \frac{2n}{(p-1)(n-2s)} \right)^{-1} + \frac{1}{2}$,
which leads to
$-\frac{n}{2}\left( \frac{1}{\gamma}-\frac{1}{2} \right)  = -\frac{n-2s}{4}(p-1) \ge - \frac{1}{2}$,
since
$p \le 1+ \frac{2}{n-2s}$.
Hence, we have
\begin{align*}%
	B&\lesssim
		\langle t \rangle^{-\frac{n}{2} \left( \frac{1}{r} - \frac{1}{q(p-1)} \right)(p-1)}
		\langle t  \rangle^{-\frac{n}{2}\left( \frac{1}{\gamma}-\frac{1}{2} \right) + 1} \\
	&= \langle t \rangle^{-\frac{n}{2r}(p-1)+1} \\
	&\lesssim 1,
\end{align*}%
since $\frac{1}{\gamma} = \frac{1}{q} + \frac{1}{2}$ and $p \ge 1+\frac{2r}{n}$.
Thus, we have \eqref{thm14diff}.
Therefore, by taking $\varepsilon_0>0$ so that
\begin{align}
\label{eq3.20}
	C_1 M(\varepsilon)^p \le \frac{M(\varepsilon)}{2},\quad
	C_2 (2M(\varepsilon))^{p-1} \le \frac{1}{2}
\end{align}
holds, 
the mapping $\Psi$ becomes a contraction mapping on $X(T, M(\varepsilon))$
with respect to the metric of $Z(T)$,
and thus we have the solution $u \in X(T, M(\varepsilon))$. 
Moreover, the uniqueness has been already proved in the proof of Theorem \ref{thm_lwp}.
Since $T$ is arbitrary, the solution is global. Moreover, $u \in X(\infty, M(\varepsilon))$. 
Indeed, since we have
$\| u \|_{X(T)} \leq M(\varepsilon)$
for arbitrary $T \in (0,\infty)$ from \eqref{eq3.18} and \eqref{eq3.20}, 
we get $\| u \|_{X(\infty)} \leq M(\varepsilon)$.
We also have 
\begin{align}
\label{eq3.21}
	\| \mathcal{N}(u) \|_{Y(\infty)} 
	\lesssim \| u\|_{X(\infty)}^p 
	\lesssim M(\varepsilon)
	=C \varepsilon ( \| u_0 \|_{H^s \cap H^{\beta}_r} + \|u_1 \|_{H^{s-1}\cap L^r} )
\end{align} 
since Lemma \ref{lem_non} holds for any $T \in (0,\infty)$.
\end{proof}

\section{Global existence of an $H^s$-mild solution for small data}
In this section, we give a sketch of the proof of Theorem \ref{thm_gwp2}.
The argument is the same as that of Theorem \ref{thm_gwp} and
we give only the difference.
For $T \in (0,\infty]$, $s \ge 0$, and $r \in (1,2]$, we define
\begin{align*}%
	\tilde{X}(T)
	:=\left\{ \phi \in L^{\infty}(0,T ; H^s(\mathbb{R}^n) ) ; \| \phi \|_{\tilde{X}(T)} < \infty \right\}
\end{align*}%
with the norm
\begin{align*}%
	\| \phi \|_{\tilde{X}(T)} := \sup_{t \in [0,T)}
		\left\{ \langle t \rangle^{\frac{n}{2}\left(\frac{1}{r}-\frac12\right) + \frac{s}{2}}
			\| |\nabla|^s \phi(t) \|_{L^2}
		+ \langle t \rangle^{\frac{n}{2}\left(\frac{1}{r}-\frac12\right)}
			\| \phi(t) \|_{L^2}
		\right\}.
\end{align*}%
For $T \in (0,\infty]$, $s \ge 0$, $r \in (1,2]$, and $1<p \le 2n/(n-2s)$ if $n>2s$ and $1<p<\infty$ if $n\le 2s$, 
we also define the function space
\begin{align*}%
	\tilde{Y}(T)
	:=\left\{ \psi \in L^{\infty}(0,T ; \dot{H}^{s-1}(\mathbb{R}^n)
	\cap L^{\sigma_1(\mathbb{R}^n) \cap L^{\sigma_2}(\mathbb{R}^n) )}
	;\| \psi \|_{\tilde{Y}(T)}<\infty \right\}
\end{align*}
with the norms
\begin{align*}%
	\| \psi \|_{\tilde{Y}(T)} := \sup_{t \in [0,T)} \left\{
		\langle t \rangle^{\eta} \| |\nabla|^{s-1} \psi(t) \|_{L^2}
			+ \sup_{\gamma \in [\sigma_1, \sigma_2]}
			\langle t \rangle^{\frac{n}{2}\left(\frac{p}{r}-\frac{1}{\gamma}\right)}
			\| \psi(t) \|_{L^{\gamma}}
		\right\}
\end{align*}%
for $s \ge 1$ and
\begin{align*}%
	\| \psi \|_{\tilde{Y}(T)} := \sup_{t \in [0,T)} \left\{
		\langle t \rangle^{\eta} \| |\nabla|^{s-1} \chi_{>1}(\nabla) \psi(t) \|_{L^2}
			+ \sup_{\gamma \in [\sigma_1, \sigma_2]}
			\langle t \rangle^{\frac{n}{2}\left(\frac{p}{r}-\frac{1}{\gamma}\right)}
			\| \psi(t) \|_{L^{\gamma}}
		\right\}
\end{align*}%
for $0\le s <1$,
where
$\eta = -\frac{1}{2} + \frac{s}{2} + \frac{n}{2}\left( \frac{p}{r} - \frac12 \right)$,
$\sigma_1 = \max\{ 1, \frac{2}{p} \}$,
and
$\sigma_2 =2$ if $2s\geq n$ and $\sigma_2 = \min \{ 2, \frac{2n}{p(n-2s)} \}$ if $2s<n$.
We remark that the assumption
$r > \frac{\sqrt{n^2+16n}-n}{4}$
implies that
\begin{align*}%
	1+ \frac{2r}{n} > \frac{2}{r}.
\end{align*}%
From this and the assumption $p \ge 1+ \frac{2r}{n}$,
it follows that
$r > \frac{2}{p}$,
which also implies
$\sigma_1 < r$.
Therefore, we can apply the same argument as
Lemmas \ref{lem_duh} and \ref{lem_non}.
We note that the norm of $\tilde{X}(T)$ does not involve $L^r$-norm,
and hence, we do not need to estimate
the $L^r$-norm of the solution.
From this and repeating the same procedure as the proof of Theorem \ref{thm_gwp},
we can prove the existence of a global solution.

\section{Asymptotic behavior of the global solution}
In this section, we give a proof of Theorem \ref{thm_ab}.

\begin{proof}[Proof of Theorem \ref{thm_ab}]
Let $u$ be the global solution constructed in Theorem \ref{thm_gwp} and let $ p > 1+2r/n$.
We see that
\begin{align*}
	u(t) - \varepsilon \mathcal{G}(t) (u_0 + u_1) = w_L(t) + w_{NL}(t),
\end{align*}
where
\begin{align*}
		w_L(t) &= \varepsilon  \left( \mathcal{D}(t) - \mathcal{G}(t) \right) (u_0 + u_1)
		+\varepsilon \partial_t \mathcal{D}(t) u_0,\\
		w_{NL}(t) &= \int_0^t \mathcal{D}(t-\tau)
			\mathcal{N}(u(\tau))\,d\tau.
\end{align*}
From Theorems \ref{thm_lplq} and \ref{thm_lplqdiff}, we obtain
\begin{align*}
	\| |\nabla|^s w_L(t) \|_{L^2}
	&\lesssim \varepsilon \langle t \rangle^{%
			-\frac{n}{2}\left(\frac{1}{r}-\frac12\right)-\frac{s}{2}-1}%
		( \| u_0 \|_{H^s \cap L^r} + \| u_1 \|_{H^{s-1} \cap L^r}) ,\\
	\|  w_L(t) \|_{L^2} 
	&\lesssim \varepsilon \langle t \rangle^{-\frac{n}{2}\left(\frac{1}{r}-\frac12\right)-1}
		(\| u_0 \|_{L^2 \cap L^r} + \|u_1\|_{H^{-1} \cap L^r}),\\
	\| w_L(t) \|_{L^r}
	&\lesssim \varepsilon \langle t \rangle^{-1}
		( \| u_0 \|_{H^{\beta}_r} + \|u_1\|_{L^r}).
\end{align*}
Therefore, it suffices to estimate $w_{NL}(t)$.
We note that
Lemma \ref{lem_non} gives
\begin{align*}
	\| \mathcal{N}(u) \|_{Y(\infty)}
\le C\varepsilon ( \| u_0 \|_{H^s \cap H^{\beta}_r} + \| u_1 \|_{H^{s-1} \cap L^r})
\end{align*}
since $u$ belongs to $X(\infty,M(\varepsilon))$ (see Theorem \ref{thm_gwp} and \eqref{eq3.21}).
In the same manner as in the proof of Lemma \ref{lem_duh}, we see that
\begin{align*}
	&\langle t \rangle^{\frac{n}{2}\left(\frac{1}{r}-\frac12\right)+\frac{s}{2}}
		\| |\nabla|^s w_{NL}(t) \|_{L^2} \\
	&\lesssim \| \mathcal{N}(u) \|_{Y(\infty)}
		\langle t \rangle^{\frac{n}{2}\left(\frac{1}{r}-\frac12\right)+\frac{s}{2}}
		\Big[ \int_0^{\frac{t}{2}}
		\langle t-\tau \rangle^{-\frac{n}{2}\left( \frac{1}{\sigma_1}-\frac12\right)-\frac{s}{2}}
		\langle \tau \rangle^{-\frac{n}{2}\left( \frac{p}{r} - \frac{1}{\sigma_1} \right)}
		\,d\tau \\
	&\quad
		+ \int_{\frac{t}{2}}^t
			\langle t-\tau \rangle^{-\frac12}
			\langle \tau \rangle^{-\eta} \,d\tau
		+ \int_0^t
			e^{-\frac{t-\tau}{2}} \langle t-\tau \rangle^{\delta_2}
			\langle \tau \rangle^{-\eta} \,d\tau
		\Big]\\
	&\lesssim \varepsilon ( \| u_0 \|_{H^s \cap H^{\beta}_r} + \| u_1 \|_{H^{s-1} \cap L^r})
		\langle t \rangle^{%
			- \min\{ \frac{n}{2r}(p-1)-1,
			\frac{n}{2}\left(\frac{1}{\sigma_1}-\frac{1}{r}\right)\}
			+\delta}.
\end{align*}
Hereafter, $\delta$ denotes an arbitrary small positive number,
and the implicit constants are dependent on $\delta$.
We also have
\begin{align*}
	&\langle t \rangle^{\frac{n}{2}\left(\frac{1}{r}-\frac12\right)}
		\| w_{NL}(t) \|_{L^2} \\
	&\lesssim \| \mathcal{N}(u) \|_{Y(\infty)}
	\langle t \rangle^{\frac{n}{2}\left(\frac{1}{r}-\frac12\right)}
	\Big[ \int_0^{\frac{t}{2}}
		\langle t - \tau \rangle^{-\frac{n}{2}\left(\frac{1}{\sigma_1}-\frac{1}{2}\right)}
		\langle \tau \rangle^{-\frac{n}{2}\left(\frac{p}{r}-\frac{1}{\sigma_1}\right)}
		\,d\tau \\
	&\quad + \int_{\frac{t}{2}}^t
		\langle t -\tau \rangle^{-\frac{n}{2}\left(\frac{1}{\sigma_2}-\frac{1}{2}\right)}
		\langle \tau \rangle^{-\frac{n}{2}\left(\frac{p}{r}-\frac{1}{\sigma_2}\right)}
		\,d\tau
		+ \int_0^t
			e^{-\frac{t-\tau}{2}} \langle t-\tau \rangle^{\delta_2}
			\langle \tau \rangle^{-\eta} \,d\tau
		\Big]\\
	&\lesssim \varepsilon ( \| u_0 \|_{H^s \cap H^{\beta}_r} + \| u_1 \|_{H^{s-1} \cap L^r})
		\langle t \rangle^{%
			- \min\{ \frac{n}{2r}(p-1)-1,
			\frac{n}{2}\left(\frac{1}{\sigma_1}-\frac{1}{r}\right)\}
			+\delta
			}
\end{align*}
and
\begin{align*}
	&\| w_{NL}(t) \|_{L^r}\\
	&\lesssim  \| \mathcal{N}(u) \|_{Y(\infty)}
		\Big[ \int_0^{t/2} \langle t-\tau \rangle^{
				-\frac{n}{2}\left( \frac{1}{\sigma_1} - \frac1r \right)}
			\langle \tau \rangle^{-\frac{n}{2}\left(\frac{p}{r}-\frac{1}{\sigma_1}\right)}
			\,d\tau\\
	&\quad
		+\int_{t/2}^t
			\langle t-\tau \rangle^{-\frac{n}{2}\left(\frac{1}{q}-\frac{1}{r} \right)}
			\langle \tau \rangle^{-\frac{n}{2}\left(\frac{p}{r}-\frac{1}{q}\right)}
			\,d\tau
			+\int_0^t e^{-\frac{t-\tau}{2}} \langle t-\tau \rangle^{\delta_r}
			\langle \tau \rangle^{-\frac{n}{2}\left( \frac{p}{r} - \frac{1}{\mu} \right)}
			\,d\tau
		\Big]\\
	&\lesssim  \varepsilon ( \| u_0 \|_{H^s \cap H^{\beta}_r} + \| u_1 \|_{H^{s-1} \cap L^r})
		\langle t \rangle^{%
			- \min\{
				\frac{n}{2r}(p-1)-1,
				\frac{n}{2}\left(\frac{1}{\sigma_1}-\frac{1}{r}\right),
				\frac{n}{2}\left(\frac{p}{r}-\frac{1}{q} \right)
				\}
			+\delta
			},
\end{align*}
where
$q = \min\{ r, \sigma_2 \} = \min\{r, \frac{2n}{p(n-2s)_+} \} \in [\sigma_1, \sigma_2]$
and $\sigma_1, \sigma_2$ are defined in \eqref{def_sigma_1}, \eqref{def_sigma_2}.
Here we note that $q > r/p$ holds under the assumption of Theorem \ref{thm_ab}.
Consequently, we have
\begin{align*}
	\| |\nabla|^s (u(t) - \varepsilon \mathcal{G}(u_0+u_1)) \|_{L^2}
	&\lesssim  \langle t \rangle^{%
		- \frac{n}{2}\left(\frac{1}{r}-\frac12\right) - \frac{s}{2}
		- \min\{1, \frac{n}{2r}(p-1)-1,
			\frac{n}{2}\left(\frac{1}{\sigma_1}-\frac{1}{r}\right)\}
			+\delta
		}\\ %
	&\quad \times
		\varepsilon ( \| u_0 \|_{H^s \cap H^{\beta}_r} + \| u_1 \|_{H^{s-1} \cap L^r}),\\
		\| u(t) - \varepsilon \mathcal{G}(u_0+u_1) \|_{L^2}
	&\lesssim  \langle t \rangle^{%
		- \frac{n}{2}\left(\frac{1}{r}-\frac12\right)
		- \min\{1, \frac{n}{2r}(p-1)-1,
			\frac{n}{2}\left(\frac{1}{\sigma_1}-\frac{1}{r}\right)\}
			+ \delta
		}\\ %
	&\quad \times
		\varepsilon ( \| u_0 \|_{H^s \cap H^{\beta}_r} + \| u_1 \|_{H^{s-1} \cap L^r}),\\
		\| u(t) - \varepsilon \mathcal{G}(u_0+u_1) \|_{L^r}
	&\lesssim \langle t \rangle^{%
			- \min\{
				1, \frac{n}{2r}(p-1)-1,
				\frac{n}{2}\left(\frac{1}{\sigma_1}-\frac{1}{r}\right),
				\frac{n}{2}\left(\frac{p}{r}-\frac{1}{q} \right)
			\}
			+ \delta
		}\\ %
	&\quad \times
		\varepsilon ( \| u_0 \|_{H^s \cap H^{\beta}_r} + \| u_1 \|_{H^{s-1} \cap L^r}).
\end{align*}
This completes the proof.
\end{proof}

\section{Lower bound of the lifespan}
In this section, we prove Theorem \ref{thm_lb}.
\begin{proof}[Proof of Theorem \ref{thm_lb}]
By Lemma \ref{lem_duh}, when $1<p<1+2r/n$, we see that
there exists a constant $C_1>0$ such that
\begin{align*}%
	\left\| \int_0^t \mathcal{D}(t-\tau) \psi(\tau)\,d\tau \right\|_{X(T)}
	\le C_1 \| \psi \|_{Y(T)}
			\langle T \rangle^{1-\frac{n}{2r}(p-1)}
\end{align*}%
for any $T > 0$ and $\psi \in Y(T)$.
Also, it follows from Lemma \ref{lem_non} that
$\| \mathcal{N}(u) \|_{Y(T)} \le C_1 \| u \|_{X(T)}^p$
for any $T>0$ and $u \in X(T)$.
Based on these estimates, we repeat the argument in the proof of Theorem \ref{thm_lwp}.
First, we have \eqref{est_lin_pt} for any $T>0$.
We define a constant $M(\varepsilon)$ by \eqref{mep}.
Let $\Psi$ be a mapping on $X(T,M(\varepsilon))$ given by \eqref{psi}.
Then, instead of \eqref{psi_bdd} and \eqref{psi_dif}, in this case we obtain
\begin{align*}%
	\| \Psi(u) \|_{X(T)}
	&\le \frac{M(\varepsilon)}{2} + C_1\langle T \rangle^{1-\frac{n}{2r}(p-1)} M(\varepsilon)^{p},\\
	\| \Psi(u) - \Psi(v) \|_{Z(T)}
	&\le C_2 \langle T \rangle^{1-\frac{n}{2r}(p-1)} M(\varepsilon)^{p-1}
	\| u - v \|_{Z(T)}
\end{align*}%
with some constant $C_2>0$
for any $u, v \in X(T,M(\varepsilon))$ and $T>0$.
Therefore, as long as
\begin{align}%
\label{t_ep}
	\max\{ C_1, C_2 \} \langle T \rangle^{1-\frac{n}{2r}(p-1)}
	(2C_0( \| u_0 \|_{H^s \cap H^{\beta}_r} + \|u_1 \|_{H^{s-1}\cap L^r} ))^{p-1}
		\varepsilon^{p-1} \le \frac12
\end{align}%
holds, the mapping $\Psi$ is contractive on $X(T,M(\varepsilon))$
with respect to the metric of $Z(T)$,
and we can construct a unique local solution.
We take $\varepsilon_1>0$ sufficiently small so that
\begin{align*}%
	\max\{ C_1, C_2 \}
	(2C_0( \| u_0 \|_{H^s \cap H^{\beta}_r} + \|u_1 \|_{H^{s-1}\cap L^r} ))^{p-1}
	\varepsilon_1^{p-1} < \frac12,
\end{align*}%
namely, \eqref{t_ep} formally holds for $T=0$ and $\varepsilon = \varepsilon_1$.
Let
$\varepsilon \in (0,\varepsilon_1]$
and let
$\tilde{T}(\varepsilon)$ be the first time which gives
the identity in the condition \eqref{t_ep}, that is,
\begin{align}%
\label{t_ep2}
	\langle \tilde{T}(\varepsilon) \rangle
	&= C_3 \varepsilon^{-1/\omega},
\end{align}%
where $\omega = \frac{1}{p-1} - \frac{n}{2r}$ and
$C_3 = C_3(n,s,r,p, \| u_0 \|_{H^s \cap H^{\beta}_r}, \|u_1 \|_{H^{s-1}\cap L^r})>0$
is a constant independent of
$\varepsilon$.
Since we can construct a solution until the time $\tilde{T}(\varepsilon)$,
we have $\tilde{T}(\varepsilon) \le T_r(\varepsilon)$.
Finally, retaking $\varepsilon_1>0$ smaller if needed so that
$C_3\varepsilon_1^{-1/\omega} \ge \sqrt{4/3}$,
we have for any $\varepsilon \in (0,\varepsilon_1 ]$,
\begin{align*}%
	T_r(\varepsilon) \ge \tilde{T}(\varepsilon) = \sqrt{ (C_3\varepsilon^{-1/\omega})^2-1}
	\ge \frac{C_3}{2}\varepsilon^{-1/\omega},
\end{align*}%
which gives the desired estimate \eqref{lb}.
\end{proof}

\section{Upper bound of the lifespan}
In this section, we prove Theorem \ref{thm_ub}.
More precisely, we will give more general blow-up result
(see Proposition \ref{prop_bu}), and Theorem \ref{thm_ub} will follow as a corollary of it.
A similar result in $L^1(\mathbb{R}^n)$-data setting and in
the Fujita-subcritical case, i.e. $p<1+\frac{2}{n}$ was obtained in
\cite{FuIkeWa_pre}.


We define a smooth compactly supported function
$\widetilde \psi  \in C^\infty([0,\infty);[0,1])$ as
	\[
	\widetilde \psi (y) =
	\begin{cases}
	1 &\mbox{if}\quad y \leq 1,\\
	\searrow &\mbox{if}\quad 1 < y < 2,\\
	0 &\mbox{if}\quad y \geq 2.
	\end{cases}
	\]
Set $\psi (x) := \widetilde{\psi} (|x|)$ for $x\in \mathbb{R}^n$,
and for $R>0$ let $\psi _R(x) = \psi (x/R)$.
For $p>1$, and $A>0$,
we define $\mu=\mu (p,A)$ by
	\begin{align*}
	\mu=\mu (p ,A) 
	:= \min \left\{ 1, \frac{p-1}{2}A
	\right\}.
	\end{align*}

For $n \in \mathbb{N}$, $p>1$, $l \in \mathbb{N}$ satisfying $l > 2p'$, where $p':=p/(p-1)$,
and for $\phi \in C_0^{\infty}(\mathbb{R}^n)$ with $\phi \ge 0$,
we also define $A(n, p,l,\phi )$ as
	\begin{equation}
	\label{1-2-1}
	A=A(n, p,l,\phi )
	:= 2^{p'-1} p'^{-\frac{1}{p}} p^{\frac{1-p'}{p}}
	\| \Phi^{p'} \phi ^{l-2p'} \|_{L^{1}(\mathbb R^n)}^{\frac{1}{p}}
	\| \phi ^{l} \|_{L^1(\mathbb R^n)}^{\frac{1}{p'}},
	\end{equation}
where
	\[
	\Phi
	:= \phi ^{2-l} \Delta(\phi^l)
	= l(l-1) \nabla \phi \cdot \nabla \phi + l \phi  \Delta \phi .
	\]
We derive an ordinary differential inequality
for the weighted average of the solution, i.e.
	\[
	I_\phi (t)=I_{\phi}[u](t)
	:= \int_{\mathbb R^n} u (t,x) \phi ^l(x) dx,
	\]
up to the constant $A(n,p,l,\phi)$
via the method in \cite{FuIkeWa_pre}.

At first, we note that for the local $H^s$-mild solution
$u(t) \in H^s(\mathbb{R}^n)$
constructed in Theorem \ref{thm_lwp},
we see that the linear part
$\varepsilon \mathcal{D}(t)(u_0+u_1) + \varepsilon \partial_t \mathcal{D}(t)u_0$
satisfies the equation
$u_{tt}-\Delta u + u_t = 0$
in $H^{s-2}(\mathbb{R}^n)$.
Also, from the proof of Theorem \ref{thm_lwp},
we know that
$\mathcal{N}(u)
\in C([0,T_2(\varepsilon)); \dot{H}^{s-1} \cap L^{\sigma_1} \cap L^{\sigma_2})$
and hence,
$\mathcal{N}(u) \in C([0,T_2(\varepsilon)); H^{-n(1/\sigma_2-1/2)}(\mathbb{R}^n))$.
Therefore, the nonlinear part
$\int_0^t \mathcal{D}(t-\tau) \mathcal{N}(u(\tau))d\tau$
satisfies the equation
$u_{tt}-\Delta u + u_t = \mathcal{N}(u)$
in $H^{-n(1/\sigma_2-1/2)}(\mathbb{R}^n)$.
Consequently, the local $H^s$-mild solution $u$
satisfies the equation
\eqref{dw}
in $H^{\min\{s-2, -n(1/\sigma_2-1/2)\}}(\mathbb{R}^n)$
for each
$t \in (0,T_2(\varepsilon))$.
This enables us to consider the coupling of the equation \eqref{dw}
with a test function, and we can derive an ODI
by the argument in \cite[Section 3]{FuIkeWa_pre}.

We recall a blow-up result obtained in \cite{FuIkeWa_pre},
which gives an upper estimate of the lifespan of solutions to \eqref{dw}
in a general setting.
We note that in the following theorem,
we do not need any condition on $p$ such as
$p < 1+\frac{2r}{n}$,
but we impose certain condition on the test function $\phi$.
\begin{proposition}[Proposition 1.3 in \cite{FuIkeWa_pre}]
\label{prop_odi}
Let $s \ge 0$, $p \in (1, \infty)$
and $(u_0,u_1) \in H^s(\mathbb R^n) \times H^{s-1}(\mathbb R^n)$,
and let $u$ be the associated $H^s$-mild solution to \eqref{dw}
constructed in Theorem \ref{thm_lwp}
(the solution obtained by applying Theorem \ref{thm_lwp} with $r=2$).
Assume that there exists $\phi \in \mathcal S (\mathbb R^n;[0,\infty))$
such that the inequalities
	\begin{align}
	0
	< I_\phi (0) - A(n,p,l,\phi )
	< 2^{\frac{1}{p-1}} \|\phi ^l\|_{L^1(\mathbb R^n)},
	\quad
	I_\phi '(0)
	> 0
	\label{eq:2}
	\end{align}
holds, where $l\in \mathbb{N}$ with $l>2p'$. Let
	\begin{align*}
	J_\phi (t)
	&:= I_\phi (t) - A(n,p,l,\phi ),\ \ \text{for}\ t\in [0,T_2(\varepsilon)),\\
	\widetilde J_\phi (0)
	&:= 2^{-\frac{1}{p-1}} \|\phi ^l\|_{L^1(\mathbb R^n)}^{-1} J_\phi(0),\\
	A_1
	&:= \frac{J_\phi '(0)}{J_\phi (0)}
	= \frac{I_\phi '(0)}{I_\phi (0) - A(n,p,l,\phi )}.
	\end{align*}
Then, the estimate
	\begin{align}
	J_\phi (t)
	\geq
	J_\phi (0)
	\bigg( 1 - \mu (p,A_1)
	\widetilde J_\phi (0)^{p-1}
	t
	\bigg)^{- \frac{2}{p-1}}
	\label{eq:rate}
	\end{align}
is valid for $t \in [0,T_2(\varepsilon))$.
Moreover,
the lifespan $T_2(\varepsilon)$ of the $H^s$-mild solution $u$ is estimated as
	\[
	T_2(\varepsilon) \leq
	 \mu (p,A_1)^{-1} \widetilde J_\phi(0)^{1-p} .
	\]
\end{proposition}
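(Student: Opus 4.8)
The plan is to pair the equation \eqref{dw} with the nonnegative weight $\phi^l$, convert the identity so obtained for the weighted average $I_\phi(t)$ into a closed second-order ordinary differential inequality (ODI) for $J_\phi=I_\phi-A$, and then read off the blow-up rate from that ODI. As established in the paragraph preceding the statement, the local $H^s$-mild solution $u$ satisfies \eqref{dw} in $H^{\min\{s-2,-n(1/\sigma_2-1/2)\}}(\mathbb{R}^n)$ for each $t\in(0,T_2(\varepsilon))$, with $u\in C([0,T_2);H^s)$ and $\partial_t u\in C([0,T_2);H^{s-1})$; hence $I_\phi\in C^2([0,T_2))$, and pairing the equation with $\phi^l\in\mathcal S(\mathbb R^n)$ gives
\[
I_\phi''(t)+I_\phi'(t)=\int_{\mathbb R^n}\mathcal N(u)\,\phi^l\,dx+\int_{\mathbb R^n}u\,\Delta(\phi^l)\,dx .
\]
For $\mathcal N(u)=|u|^p$ the first term equals $\int|u|^p\phi^l$ (the $-|u|^p$ case reduces to this by replacing $u$ with $-u$).

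Next I would estimate the two terms by Hölder's inequality, splitting the weight as $\phi^l=\phi^{l/p}\cdot\phi^{l/p'}$. Writing $\Delta(\phi^l)=\Phi\,\phi^{l-2}$, the exponent bookkeeping $(l-2-l/p)p'=l-2p'$ yields
\[
\Big|\int u\,\Delta(\phi^l)\Big|\le\Big(\int|u|^p\phi^l\Big)^{1/p}\|\Phi^{p'}\phi^{l-2p'}\|_{L^1}^{1/p'},\qquad |I_\phi|\le\Big(\int|u|^p\phi^l\Big)^{1/p}\|\phi^l\|_{L^1}^{1/p'}.
\]
Setting $F:=\int|u|^p\phi^l$, the second bound gives $F\ge I_\phi^p\|\phi^l\|_{L^1}^{-(p-1)}$ when $I_\phi\ge0$. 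Feeding the first bound into the identity and applying Young's inequality $F^{1/p}B\le\tfrac12 F+\tfrac1{p'}(2/p)^{p'-1}B^{p'}$ with $B=\|\Phi^{p'}\phi^{l-2p'}\|_{L^1}^{1/p'}$ produces
\[
I_\phi''+I_\phi'\ge\frac{1}{2\|\phi^l\|_{L^1}^{p-1}}I_\phi^p-\frac1{p'}\Big(\frac2p\Big)^{p'-1}\|\Phi^{p'}\phi^{l-2p'}\|_{L^1}.
\]
The role of the constant \eqref{1-2-1} is precisely that $\tfrac12\|\phi^l\|_{L^1}^{-(p-1)}A^p$ equals the subtracted constant above (a short computation using $p(p'-1)=p'$), so the inequality collapses to $I_\phi''+I_\phi'\ge\tfrac12\|\phi^l\|_{L^1}^{-(p-1)}(I_\phi^p-A^p)$. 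The superadditivity $x^p-A^p\ge(x-A)^p$ for $x\ge A\ge0$, $p\ge1$, then gives the clean closed ODI
\[
J_\phi''+J_\phi'\ge\frac{1}{2\|\phi^l\|_{L^1}^{p-1}}\,J_\phi^p,\qquad J_\phi=I_\phi-A,
\]
valid as long as $J_\phi\ge0$.

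The remaining step, which is the content of \cite[Proposition 1.3]{FuIkeWa_pre}, is the analysis of this damped ODI under $J_\phi(0)>0$ and $J_\phi'(0)=I_\phi'(0)>0$. Here I would first use the two initial inequalities in \eqref{eq:2} to propagate $J_\phi>0$ and $J_\phi'>0$, and then reduce the second-order ODI to a first-order one of the form $J_\phi'\ge\delta\,J_\phi^{(p+1)/2}$ by a continuity (bootstrap) argument; integrating $J_\phi^{-(p+1)/2}J_\phi'\ge\delta$ yields the lower bound \eqref{eq:rate} with exponent $-2/(p-1)$, and hence the stated lifespan estimate. I expect this ODE reduction to be the main obstacle: the damping term $J_\phi'$ must be controlled against $J_\phi^p$, and it is exactly the upper bound on $J_\phi(0)-A$ in \eqref{eq:2}, together with $A_1=J_\phi'(0)/J_\phi(0)$ and the definition $\mu(p,A_1)=\min\{1,\tfrac{p-1}{2}A_1\}$, that calibrates $\delta$ so that the bootstrap closes. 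The Hölder/Young bookkeeping, by contrast, is routine once the weight is split correctly.
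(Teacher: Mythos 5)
Your proposal is sound, but be aware that the paper does not actually prove Proposition \ref{prop_odi}: it is imported verbatim from \cite{FuIkeWa_pre} (Proposition 1.3 there), and the only ingredient the present paper supplies is the paragraph preceding the statement, which verifies that the $H^s$-mild solution satisfies \eqref{dw} in a negative-order Sobolev space so that pairing with $\phi^l\in\mathcal S$ is legitimate and the ODI ``can be derived by the argument in \cite{FuIkeWa_pre}''. Your reconstruction of that hidden argument is the standard one and the bookkeeping checks out: the exponent identity $(l-2-l/p)p'=l-2p'$ is correct; the constant $A$ in \eqref{1-2-1} is calibrated exactly so that the Young remainder $\tfrac{1}{p'}(2/p)^{p'-1}\|\Phi^{p'}\phi^{l-2p'}\|_{L^1}$ equals $\tfrac12\|\phi^l\|_{L^1}^{-(p-1)}A^p$ (via $p(p'-1)=p'$); and $\widetilde J_\phi(0)^{p-1}=\tfrac12\|\phi^l\|_{L^1}^{-(p-1)}J_\phi(0)^{p-1}$ is precisely the coefficient in the resulting closed inequality $J_\phi''+J_\phi'\ge\tfrac12\|\phi^l\|_{L^1}^{-(p-1)}J_\phi^p$, so that integrating $J_\phi'\ge\delta J_\phi^{(p+1)/2}$ with $\delta=\tfrac{2\mu}{p-1}\widetilde J_\phi(0)^{p-1}J_\phi(0)^{-(p-1)/2}\cdot J_\phi(0)^{p-1}$-normalization reproduces \eqref{eq:rate} exactly. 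You also correctly identify where the two hypotheses in \eqref{eq:2} enter: the upper bound gives $\widetilde J_\phi(0)^{p-1}<1$, which together with $\mu\le\tfrac{p-1}{2}A_1$ initializes the comparison $J_\phi'(0)\ge\delta J_\phi(0)^{(p+1)/2}$. The only part you leave as a sketch is closing the bootstrap against the damping term $J_\phi'$, and that is precisely the content of the cited Proposition 1.3; since the paper itself takes this as a black box, your treatment is at the same level of completeness as the paper's, and I see no gap beyond what is deliberately deferred to \cite{FuIkeWa_pre}.
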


\begin{remark}
From the estimate \eqref{eq:rate},
we can expect that the blow-up rate of the solution
$u$ is similar to that of the
second order ordinary differential equation
$y''(t) = y(t)^p$,
which indicates the wave-like behavior of the solution near the blow-up time.
However, we remark that the estimate \eqref{eq:rate}
does not directly imply the blow-up rate of the solution,
because there is a possibility that
the blow-up time of
$\| (u, \partial_t u)(t) \|_{H^1 \times L^2}$
is earlier than that of $J_{\phi}(t)$.
\end{remark}

Proposition \ref{prop_odi} means that the condition \eqref{eq:2} is a sufficient condition for
the blow-up of a solution.
Indeed, we prove that if $p \in (1, 1+\frac{2r}{n})$, we take the test function
$\phi = \psi_{R(\varepsilon)}$
with an appropriate scaling parameter $R(\varepsilon)$, which will be defined later,
we ensure the condition \eqref{eq:2} for any $\varepsilon>0$ and
show an upper estimate of the lifespan of solutions to \eqref{dw}
for sufficiently small $\varepsilon>0$.

To state our main result, we introduce several nontation.
We denote by $|S_{n-1}|$ the surface area of the unit sphere $S_{n-1}$ in $\mathbb{R}^n$.
For $\varepsilon >0$, $r\in (1,2]$, $p\in (1,1+\frac{2r}{n})$,
$k \in (\frac{n}{r}, \min\{n, \frac{2}{p-1} \})$,
and $c_0,C_0>0$, set
\begin{align}%
\label{r}
	R(\varepsilon):=
	\max \left(2^{\frac{1}{n-k}},
	\left\{
		\frac{C_0|S_{n-1}|2^{n-k}\varepsilon}{
			(n-k)2^{\frac{1}{p-1}}\|\psi^l\|_{L^1}}\right\}^{\frac{1}{k}},
	\left\{\frac{4(n-k)A(n,p,l,\psi)}{c_0|S_{n-1}|\varepsilon}
		\right\}^{\frac{1}{\frac{2}{p-1}-k}}\right).
\end{align}%
where $A=A(n,p,l,\psi)$ is defined by (\ref{1-2-1}) with $\phi=\psi$.
\begin{proposition}
\label{prop_bu}
Let $n\in \mathbb{N}$, $r\in (1,2]$, $p \in (1,1+\frac{2r}{n})$,
$l\in \mathbb{N}$ with $l>2p'$,
$\frac{n}{r}<k<\min\{ n, \frac{2}{p-1} \}$
and $\varepsilon>0$.
We assume that
$(u_0, u_1) \in H^s(\mathbb{R}^n)  \times H^{s-1}(\mathbb{R}^n)$ satisfies
\begin{equation}
\label{1-6-2}
          C_0(1+|x|)^{-k}\ge u_0(x)\ge
          \begin{cases}
          c_0|x|^{-k},&\text{if}\ |x|\ge 1,\\
          0,&\text{if}\ |x|\le 1,
          \end{cases}
\ \ u_1(x)\ge
          \begin{cases}
          c_1|x|^{-k},&\text{if}\ |x|\ge 1,\\
          0,&\text{if}\ |x|\le 1,
          \end{cases}
\end{equation}
with some positive constants $c_0$, $c_1$ and $C_0$.
Then there exists $\varepsilon_2>0$ depending only on
$n,k,p,l,c_0, C_0$ such that for any $\varepsilon\in (0,\varepsilon_2]$ 
the associated $H^s$-mild solution $u$ of \eqref{dw} satisfies
\begin{align}%
\label{bl-rt}
	\int_{\mathbb{R}^n} u(t,x) \psi_{R(\varepsilon)}^{l}(x) dx
	\ge 
	\frac{c_0|S_{n-1}|}{4(n-k)}R(\varepsilon)^{n-k}\varepsilon
		\left( 1- \mu_0
			\varepsilon^{\frac{1}{\frac{1}{p-1}-\frac{k}{2}}} t
		\right)^{-\frac{2}{p-1}}
\end{align}%
for $t\in [0,T_2(\varepsilon))$,
with some constant $\mu_0 = \mu_0(n,p,k,c_1,C_0) > 0$
independent of $\varepsilon$.
Moreover, the lifespan $T_2(\varepsilon)$
is estimated as
\begin{align}
	\label{lf_upp2}
	T_2(\varepsilon) \le
	 \mu_0^{-1} 
	 \varepsilon^{-\frac{1}{\frac{1}{p-1}-\frac{k}{2}}}.
\end{align}
\end{proposition}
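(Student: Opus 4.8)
The plan is to apply Proposition \ref{prop_odi} with the concrete test function $\phi = \psi_{R(\varepsilon)}$, where $R = R(\varepsilon)$ is the scaling parameter from \eqref{r}; since $\psi_R \in C_0^\infty(\mathbb{R}^n;[0,1]) \subset \mathcal{S}(\mathbb{R}^n;[0,\infty))$, this is admissible. Recalling that the data in \eqref{dw} carry the factor $\varepsilon$, we have $I_\phi(0) = \varepsilon\int_{\mathbb{R}^n} u_0\,\psi_R^l\,dx$ and $I_\phi'(0) = \varepsilon\int_{\mathbb{R}^n} u_1\,\psi_R^l\,dx$. The whole argument reduces to (i) verifying the structural hypotheses \eqref{eq:2} for every small $\varepsilon$, and (ii) tracking the dependence of $J_\phi(0)$, $\widetilde J_\phi(0)$, $A_1$, and $\mu(p,A_1)$ on $R$ and $\varepsilon$, so that the abstract conclusions of Proposition \ref{prop_odi} turn into \eqref{bl-rt} and \eqref{lf_upp2}.

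First I would record the two ingredients that make everything scale correctly. The first is the homogeneity of $A$: a change of variables $y = x/R$ in \eqref{1-2-1}, using $\nabla\psi_R = R^{-1}(\nabla\psi)(\cdot/R)$ and $\Delta\psi_R = R^{-2}(\Delta\psi)(\cdot/R)$, gives $\Phi_{\psi_R} = R^{-2}\Phi_\psi(\cdot/R)$ and hence
\[
A(n,p,l,\psi_R) = R^{\,n-\frac{2}{p-1}}\,A(n,p,l,\psi),
\]
where I collect the powers of $R$ via $\tfrac1p+\tfrac1{p'}=1$ and $\tfrac{2p'}{p}=\tfrac{2}{p-1}$. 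The second ingredient is a set of polar-coordinate bounds from \eqref{1-6-2}: since $\psi_R\equiv1$ on $|x|\le R$, is supported in $|x|\le 2R$, and $k<n$,
\[
\frac{c_0|S_{n-1}|}{2(n-k)}R^{n-k} \le \int_{\mathbb{R}^n} u_0\psi_R^l\,dx \le \frac{C_0|S_{n-1}|2^{\,n-k}}{n-k}R^{n-k}, \qquad \int_{\mathbb{R}^n} u_1\psi_R^l\,dx \ge \frac{c_1|S_{n-1}|}{2(n-k)}R^{n-k},
\]
where the left-hand lower bounds use $R^{n-k}\ge 2$, guaranteed by the first entry of \eqref{r}.

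With these in hand I would verify \eqref{eq:2}, the three entries of the maximum in \eqref{r} being tuned so that each controls one requirement. The second entry forces $I_\phi(0) < 2^{1/(p-1)}\|\psi_R^l\|_{L^1} = 2^{1/(p-1)}R^n\|\psi^l\|_{L^1}$ (the upper inequality), and the third entry, through the homogeneity above, forces $A(n,p,l,\psi_R)\le \tfrac{c_0|S_{n-1}|}{4(n-k)}R^{n-k}\varepsilon$, whence
\[
J_\phi(0) = I_\phi(0) - A(n,p,l,\psi_R) \ge \frac{c_0|S_{n-1}|}{4(n-k)}R^{n-k}\varepsilon > 0,
\]
the lower inequality; positivity of $I_\phi'(0)$ is immediate from the bound on $\int u_1\psi_R^l\,dx$. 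Since the third entry of \eqref{r} grows like $\varepsilon^{-1/(\frac{2}{p-1}-k)}\to\infty$ while the first is constant and the second tends to $0$, there is $\varepsilon_2>0$ such that for $\varepsilon\le\varepsilon_2$ the maximum is attained by the third entry; I fix such an $\varepsilon_2$, so that $R(\varepsilon)=\big(\tfrac{4(n-k)A}{c_0|S_{n-1}|\varepsilon}\big)^{1/(\frac{2}{p-1}-k)}$, which pins $R$ to the sharp size $R\sim\varepsilon^{-1/(\frac{2}{p-1}-k)}$ from both sides.

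Now Proposition \ref{prop_odi} applies and it remains to convert its output. For the coefficient I use $J_\phi(0)\ge\tfrac{c_0|S_{n-1}|}{4(n-k)}R^{n-k}\varepsilon$ together with $I_\phi(t)\ge J_\phi(t)$. For the rate I need a lower bound on $\mu(p,A_1)\widetilde J_\phi(0)^{p-1}$. The ratio $A_1 = I_\phi'(0)/J_\phi(0)$ is bounded below by the positive constant $\tfrac{c_1}{C_0 2^{n-k+1}}$, independent of $\varepsilon$ (combine the lower bound on $\int u_1\psi_R^l$ with the upper bound on $I_\phi(0)$), so $\mu(p,A_1)=\min\{1,\tfrac{p-1}{2}A_1\}$ is bounded below by a positive constant as well. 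Using $\widetilde J_\phi(0)=2^{-1/(p-1)}(R^n\|\psi^l\|_{L^1})^{-1}J_\phi(0)$ and the two-sided control $J_\phi(0)\sim R^{n-k}\varepsilon$, one gets $\widetilde J_\phi(0)^{p-1}\sim R^{-k(p-1)}\varepsilon^{p-1}$; substituting the now-sharp $R\sim\varepsilon^{-1/(\frac{2}{p-1}-k)}$ and using $k+(\tfrac{2}{p-1}-k)=\tfrac{2}{p-1}$ collapses the exponent to $\varepsilon^{2/(\frac{2}{p-1}-k)}=\varepsilon^{1/(\frac{1}{p-1}-\frac{k}{2})}$. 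Thus $\mu(p,A_1)\widetilde J_\phi(0)^{p-1}\ge\mu_0\varepsilon^{1/(\frac{1}{p-1}-\frac{k}{2})}$ for some $\mu_0>0$ independent of $\varepsilon$; inserting this and the coefficient bound into \eqref{eq:rate}, and using monotonicity of $(1-at)^{-2/(p-1)}$ in $a$, yields \eqref{bl-rt}, while $T_2(\varepsilon)\le(\mu(p,A_1)\widetilde J_\phi(0)^{p-1})^{-1}\le\mu_0^{-1}\varepsilon^{-1/(\frac{1}{p-1}-\frac{k}{2})}$ gives \eqref{lf_upp2}. The main obstacle is precisely this last bookkeeping: a clean power of $\varepsilon$ appears only because restricting to $\varepsilon\le\varepsilon_2$ makes the third entry of \eqref{r} dominant, so that $R(\varepsilon)$ is comparable to $\varepsilon^{-1/(\frac{2}{p-1}-k)}$ from both sides — the one-sided bound on $R$ that the maximum supplies for free would control the coefficient but not the rate.
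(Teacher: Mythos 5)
Your proposal is correct and follows essentially the same route as the paper's proof: apply Proposition \ref{prop_odi} with $\phi=\psi_{R(\varepsilon)}$, verify \eqref{eq:2} by matching each entry of the maximum in \eqref{r} to one of the required inequalities, use the scaling identity $A(n,p,l,\psi_R)=R^{\,n-\frac{2}{p-1}}A(n,p,l,\psi)$ and the polar-coordinate bounds from \eqref{1-6-2} to get $J_{\psi_R}(0)\gtrsim R^{n-k}\varepsilon$ and $A_1\ge c_1/(C_0 2^{n-k+1})$, and finally restrict to $\varepsilon\le\varepsilon_2$ so that $R(\varepsilon)$ is given exactly by the third entry, which collapses $R^{-k(p-1)}\varepsilon^{p-1}$ to $\varepsilon^{1/(\frac{1}{p-1}-\frac{k}{2})}$. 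The bookkeeping, including the observation that the two-sided identification of $R(\varepsilon)$ is what makes the rate exponent clean, matches the paper's argument step for step.
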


\begin{remark}
{\rm (i)}
The condition $\frac{n}{r}<k<n$ implies
$\langle x \rangle^{-k} \in L^r(\mathbb{R}^n)\setminus L^1(\mathbb{R}^n)$.

{\rm (ii)}
We can also consider the case $k=n$ if $p<1+\frac{2}{n}$ holds.
In this case, with
\begin{align*}%
	R(\varepsilon) :=
	\max\left(
		2,\left\{\frac{C_0\varepsilon|S_{n-1}|}{2^{\frac{1}{p-1}}
		\|\psi^l\|_{L^1}}\right\}^{\frac{1}{n}},
		\left\{\frac{2A(n,p,l,\psi)}{c_0\varepsilon |S_{n-1}|\log 2}
			\right\}^{\frac{1}{\frac{2}{p-1}-n}}\right),
\end{align*}%
we have the estimates
\begin{align*}%
	\int_{\mathbb{R}^n} u(t,x) \psi_{R(\varepsilon)}^{l}(x) dx
	\ge
	\frac{c_0 |S_{n-1}|}{2} \varepsilon \log R(\varepsilon)
		\left( 1- \mu_0
			\varepsilon^{\frac{1}{\frac{1}{p-1}-\frac{n}{2}}} t
		\right)^{-\frac{2}{p-1}}
\end{align*}%
and
\begin{align}%
\label{t2ep_l1}
	T_2(\varepsilon) \le \mu_0^{-1}
	 \varepsilon^{-\frac{1}{\frac{1}{p-1}-\frac{n}{2}}}.
\end{align}%

{\rm (iii)}
The case $k>n$, i.e. $u_0, u_1 \in L^1(\mathbb{R}^n)$,
has already been studied in \cite{FuIkeWa_pre},
and it follows that
\begin{align*}%
	\int_{\mathbb{R}^n} u(t,x) \psi_{R(\varepsilon)}^{l}(x) dx
	\ge \frac{c_0 |S_{n-1}|}{4(k-n)} \varepsilon
		\left( 1- \mu_0
			\varepsilon^{\frac{1}{\frac{1}{p-1}-\frac{n}{2}}} t
		\right)^{-\frac{2}{p-1}}
\end{align*}%
and \eqref{t2ep_l1}.
\end{remark}

Our proof of Proposition \ref{prop_bu} is based on
the combination of the argument of the proof of Corollary 1.4 in
\cite{FuIkeWa_pre}
and the proof of Corollary 1.8 in
\cite{FuOz2}.
In Corollary 1.4 in
\cite{FuIkeWa_pre}, blow-up mechanism
(upper estimate of lifespan and blow-up rate) to (\ref{dw}) in the Fujita-subcritical case, i.e.
$1<p<1+\frac{2}{n}$ and
$L^1(\mathbb{R}^n)$-setting is studied.
In Corollary 1.8 in \cite{FuOz2},
a similar result to the nonlinear Schr\"odinger equation is proved.

\begin{proof}[Proof of Proposition \ref{prop_bu}]
We apply Proposition \ref{prop_odi} with
$\phi=\psi_{R(\varepsilon)}$ for sufficiently small $\varepsilon>0$.
To do so, we first prove that the condition (\ref{eq:2}) with
$\phi=\psi_{R(\varepsilon)}$ is valid for any $\varepsilon>0$.
For any $\varepsilon>0$, let $R(\varepsilon)$ be given by \eqref{r}.
By the definition (\ref{1-2-1}) of $A$ and using changing variable, the identity
	\begin{equation}
	\label{2-1-3}
	A(n,p,l,\psi _{R(\varepsilon)})
	= A(n,p,l,\psi ) R(\varepsilon)^{n-2\frac{p'}{p}}
	\end{equation}
holds for any $\varepsilon>0$.
Since $u$ satisfies the initial condition of (\ref{dw}), by this identity, the equality
\begin{align}%
\label{2-1}
	I_{\psi_{R(\varepsilon)}}(0) - A(n,p,l,\psi _{R(\varepsilon)})
	=\varepsilon\int_{\mathbb{R}^n}u_0(x)\psi_{R(\varepsilon)}^l(x)dx
		-A(n,p,l,\psi)R(\varepsilon)^{n - 2 \frac{p'}{p}}
\end{align}%
holds for any $\varepsilon>0$.
By the definition of $R(\varepsilon)$, the estimates 
\begin{equation}
\label{2-2-1}
   R(\varepsilon)\ge 2^{\frac{1}{n-k}}>1
\end{equation}
hold for any $\varepsilon>0$. Since $u_0$ is non-negative and satisfies
$u_0(x)\ge c_0|x|^{-k}$ for $|x|\ge 1$,
by the properties of the function $\psi$,
the estimates
\begin{align}
\label{2-2}
       \int_{\mathbb{R}^n}u_0(x)\psi^l_{R(\varepsilon)}(x)dx
       &=\int_{0\le |x|\le 2R(\varepsilon)}u_0(x)\psi^l\left(\frac{x}{R(\varepsilon)}\right)dx
       \ge c_0\int_{1<|x|<R(\varepsilon)}|x|^{-k}dx\\
\nonumber
       &=\frac{c_0|S_{n-1}|}{n-k}\left(R(\varepsilon)^{n-k}-1\right)
       \ge\frac{c_0|S_{n-1}|}{2(n-k)}R(\varepsilon)^{n-k}
\end{align}
hold for any $\varepsilon>0$, where we have used the estimate (\ref{2-2-1}) to obtain the last inequality. By the definition of $R(\varepsilon)$, the estimate 
\[
  R(\varepsilon)\ge \left\{\frac{4(n-k)A(n,p,l,\psi)}{c_0|S_{n-1}|\varepsilon}\right\}^{\frac{1}{\frac{2}{p-1}-k}}
\] 
is true for any $\varepsilon>0$.
By the assumption $k<\frac{2}{p-1}$ and the estimates (\ref{2-1}) and (\ref{2-2}), the inequalities
\begin{align}
\label{2-4-1}
     I_{\psi_{R(\varepsilon)}}(0)-A(n,p,l,\psi_{R(\varepsilon)})
     &\ge \frac{c_0|S_{n-1}|}{2(n-k)} R(\varepsilon)^{n-k} \varepsilon
     		-A(n,p,l,\psi)R(\varepsilon)^{n - 2 \frac{p'}{p}}\\
     &=R(\varepsilon)^{n-k}\left\{\frac{c_0|S_{n-1}|\varepsilon}{2(n-k)}-A(n,p,l,\psi)
     	R(\varepsilon)^{-\left(\frac{2}{p-1}-k\right)}\right\}\notag\\
     &\ge \frac{c_0|S_{n-1}|}{4(n-k)}R(\varepsilon)^{n-k}\varepsilon>0
     \notag
\end{align}
hold for any $\varepsilon>0$.

Next we prove the inequality
$I'_{\psi_{R(\varepsilon)}}(0)>0$ for any $\varepsilon>0$.
In the same way as \eqref{2-2},
we have for any $\varepsilon>0$,
\begin{align}
\label{2-5}
         I'_{\psi_{R(\varepsilon)}}(0)
         &=\varepsilon\int_{\mathbb{R}^n}u_1(x)\psi^l_{R(\varepsilon)}(x)dx
         	>\frac{\varepsilon c_1|S_{n-1}|}{2(n-k)}R(\varepsilon)^{n-k}>0.
\end{align}

Finally, we prove that for any $\varepsilon>0$, the inequality
\[
    I_{\psi_{R(\varepsilon)}}(0)
    	- A(n,p,l,\psi _{R(\varepsilon)})
	<2^{\frac{1}{p-1}} \|\psi_{R(\varepsilon)}^{l} \|_{L^1(\mathbb R^n)}
\]
holds. Since $A$ is positive,
and $u_0$ satisfies
\[
        u_0(x)\le C_0(1+|x|)^{-k},\ \ \ \text{for}\ x\in \mathbb{R}^n,
\]
due to the assumption of $u_0$,
by the properties of the function $\psi$, the estimates
\begin{align}%
\label{2-5-1}
	&I_{\psi_{R(\varepsilon)}}(0) - A(n,p,l,\psi _{R(\varepsilon)})
	<\varepsilon\int_{\mathbb{R}^n}u_0(x)\psi_{R(\varepsilon)}^l(x)dx\\
	&\le C_0\varepsilon\int_{|x|<2R(\varepsilon)}(1+|x|)^{-k}dx
	\le \frac{C_0|S_{n-1}|2^{n-k}\varepsilon}{n-k}R(\varepsilon)^{n-k} \notag\\
	&\le 2^{\frac{1}{p-1}} \| \psi^{l} \|_{L^1(\mathbb{R}^n)} R(\varepsilon)^n
	= 2^{\frac{1}{p-1}} \|\psi_{R(\varepsilon)}^{l} \|_{L^1(\mathbb R^n)}\notag
\end{align}
hold for any $\varepsilon>0$,
where we have used the conditions $k<n$ and
\[
         R(\varepsilon)\ge \left\{\frac{C_0|S_{n-1}|2^{n-k}\varepsilon}{(n-k)2^{\frac{1}{p-1}}
         \|\psi^l\|_{L^1}}\right\}^{\frac{1}{k}}.
\]
Therefore, we find that
the function $\psi_{R(\varepsilon)}$ satisfies the condition \eqref{eq:2} with
$\phi=\psi_{R(\varepsilon)}$ for any $\varepsilon>0$.
Thus we can apply Proposition \ref{prop_odi} with
$\phi = \psi_{R(\varepsilon)}$, to obtain the estimate
\begin{align}
\label{2-8-4}
	J_{\psi_{R(\varepsilon)}}(t)
	\ge J_{\psi_{R(\varepsilon)}}(0)
		\left( 1 - \mu(p, A_1(\varepsilon)) \widetilde{J}_{\psi_{R(\varepsilon)}}(0)^{p-1}
			t \right)^{-\frac{2}{p-1}},
\end{align}%
for any $t\in (0,T_2(\varepsilon))$, and the lifespan $T_2(\varepsilon)$ is estimated as
\begin{align}%
\label{2-9-4}
	T_2(\varepsilon) \le  \mu(p,A_1(\varepsilon))^{-1}
		\widetilde{J}_{\psi_{R(\varepsilon)}}(0)^{1-p} ,
\end{align}%
where $A_1(\varepsilon)$ satisfies
\[%
	A_1(\varepsilon)
	:= \frac{I'_{\psi_{R(\varepsilon)}}(0)}{I_{\psi_{R(\varepsilon)}}(0)-A(n,p,l,\psi_{R(\varepsilon)})}
	\ge \frac{c_1}{C_02^{n-k+1}}=:\tilde{A_1},
\]%
for any $\varepsilon>0$, where we have used the estimates (\ref{2-5}) and (\ref{2-5-1}).
We note that $\tilde{A_1}$ is independent of $\varepsilon>0$.
Moreover, by changing variable and the estimate (\ref{2-4-1}), the inequalities
\begin{align}%
\label{2-7-1}
	\widetilde{J}_{\psi_{R(\varepsilon)}}(0)^{p-1}
	&= 2^{-1} \| \psi_{R(\varepsilon)}^{\ell} \|_{L^1(\mathbb{R}^n)}^{-(p-1)}
		J_{\psi_{R(\varepsilon)}}(0)^{p-1} \\
	&\ge 2^{-1} \| \psi^{\ell} \|_{L^1(\mathbb{R}^n)}^{-(p-1)} R(\varepsilon)^{-n(p-1)}
		\left\{\frac{c_0|S_{n-1}|}{4(n-k)}R(\varepsilon)^{n-k}\varepsilon\right\}^{p-1}\notag\\
	&=\left\{\frac{c_0|S_{n-1}|}{2^{\frac{1}{p-1}+2}(n-k)\|\psi^l\|_{L^1}}\right\}^{p-1}
	R(\varepsilon)^{-k(p-1)}\varepsilon^{p-1} \notag
\end{align}%
hold for any $\varepsilon>0$. Here we take sufficiently small
$\varepsilon_2=\varepsilon_2(n,k,l,\psi,c_0,C_0) > 0$
so that
\begin{align}
\label{2-8-1}
     R(\varepsilon)=
     \left\{\frac{4(n-k)A(n,p,l,\psi)}{c_0|S_{n-1}|\varepsilon}\right\}^{\frac{1}{\frac{2}{p-1}-k}}
\end{align}
holds for $\varepsilon\in (0,\varepsilon_2]$.
Thus by combining (\ref{2-7-1}) and (\ref{2-8-1}), the estimate
\begin{equation}
\label{2-9-1}
   \widetilde{J}_{\psi_{R(\varepsilon)}}(0)^{p-1}
   \ge 
   2^{-1+\frac{4}{\frac{2}{p-1}-k}}
   \left(\frac{n-k}{|S_{n-1}|}\right)^{\frac{2(p-1)
   	\left(k-\frac{1}{p-1}\right)}{\frac{2}{p-1}-k}}\|\psi^l\|_{L^1}^{-(p-1)}
		A^{\frac{k(p-1)}{\frac{2}{p-1}-k}}(n,p,l,\psi)
		\varepsilon^{\frac{1}{\frac{1}{p-1}-\frac{k}{2}}}
\end{equation}
holds for any $\varepsilon\in (0,\varepsilon_2]$.
Therefore, by combining the estimates (\ref{2-7-1}) and (\ref{2-9-1}),
we obtain (\ref{bl-rt})
with $\mu_0=\mu(p,\tilde{A_1})$, which completes the proof.
\end{proof}


\appendix
\section{}
In this appendix, we prove that $X(T,M)$ is a closed subset of $Z(T)$ if $T >0$ is finite.
Let
$s \ge 0$, $r \in (1,2]$, $T\in (0,\infty)$.
Since $T$ is finite, we note that
the topology of $X(T)$ with respect to the norm \eqref{X} is the same as
the usual topology of
$L^{\infty}(0,T ; H^s(\mathbb{R}^n)\cap L^r(\mathbb{R}^n))$.
\begin{lemma}
\label{lem_close}
$X(T;M)$ is a closed subset of $Z(T)$.
\end{lemma}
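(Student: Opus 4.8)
The plan is to prove closedness sequentially: I would take a sequence $\{u_k\}\subset X(T,M)$ with $u_k\to u$ in $Z(T)=L^\infty(0,T;L^2)$ and show that the limit $u$ again lies in $X(T,M)$. The first thing I would record is that, because $T<\infty$ and $r\le2$, the two time weights in \eqref{X}, namely $\langle t\rangle^{\frac n2(\frac1r-\frac12)+\frac s2}$ and $\langle t\rangle^{\frac n2(\frac1r-\frac12)}$, are bounded between $1$ and a finite constant on $[0,T]$. From $\|u_k-u\|_{Z(T)}\to0$ one has $\|u_k(t)-u(t)\|_{L^2}\le\|u_k-u\|_{Z(T)}$ for $t$ outside a null set, so after discarding a countable union of null sets there is a set $E\subset(0,T)$ of full measure on which $u_k(t)\to u(t)$ in $L^2$ along the full sequence, while for every $k$ the quantities
\[
a_k(t):=\||\nabla|^s u_k(t)\|_{L^2},\quad b_k(t):=\|u_k(t)\|_{L^2},\quad c_k(t):=\|u_k(t)\|_{L^r}
\]
obey the bound dictated by $\|u_k\|_{X(T)}\le M$.

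Next I would fix $t\in E$. Since the weights are $\ge1$, the bound $\|u_k\|_{X(T)}\le M$ forces $a_k(t),b_k(t),c_k(t)\le M$ for all $k$, so by passing to a subsequence (depending on $t$) I may assume $a_k(t)\to a^\ast$, $b_k(t)\to b^\ast$, $c_k(t)\to c^\ast$, and then
\[
\langle t\rangle^{\frac n2(\frac1r-\frac12)+\frac s2}a^\ast
+\langle t\rangle^{\frac n2(\frac1r-\frac12)}b^\ast+c^\ast\le M
\]
as the limit of quantities that are each $\le M$. Along this subsequence I would invoke weak lower semicontinuity of norms. The sequence $u_k(t)$ is bounded in the Hilbert space $H^s$, hence has a further weakly convergent subsequence there; since it converges to $u(t)$ strongly in $L^2$ and strong $L^2$ convergence implies convergence in $\mathcal S'$, the weak $H^s$ limit is $u(t)$, so $u(t)\in H^s$ and $\||\nabla|^s u(t)\|_{L^2}\le a^\ast$. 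Because $r\in(1,2]$ the space $L^r$ is reflexive, so the same reasoning in $L^r$ gives $u(t)\in L^r$ and $\|u(t)\|_{L^r}\le c^\ast$, while $\|u(t)\|_{L^2}=b^\ast$ by strong convergence. Substituting these three bounds shows the supremand in \eqref{X} at time $t$ is $\le M$ for every $t\in E$, whence $\|u\|_{X(T)}\le M$.

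Finally I would record that $u$ is strongly measurable as an $H^s\cap L^r$-valued function, which follows from Pettis' theorem since $u$ is weakly measurable (being the $L^2$-limit of the $u_k$) and $H^s\cap L^r$ is separable; combined with $\|u\|_{X(T)}\le M$ this gives $u\in X(T,M)$ and hence closedness. The hard part will be the combination of the three weighted seminorms: lower semicontinuity cannot be applied term by term to the supremand, because the $\liminf$ of a sum only dominates the sum of the $\liminf$'s. This is exactly the difficulty that the extraction of a single scalar subsequence realizing $(a^\ast,b^\ast,c^\ast)$ is designed to resolve, and keeping track of the two further subsequences (for weak $H^s$ and weak $L^r$ convergence) without losing the scalar limits is the only place where care is genuinely needed.
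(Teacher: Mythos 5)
Your proof is correct, but it takes a genuinely different route from the paper. The paper works globally in the Bochner space: it identifies $L^{\infty}(0,T;H^s\cap L^r)$ as the dual of the separable space $L^1(0,T;H^{-s}+L^{r'})$, applies the sequential Banach--Alaoglu theorem to the bounded sequence to extract a weak-$*$ convergent subsequence with limit $v$, obtains $\|v\|_{X(T)}\le M$ from weak-$*$ lower semicontinuity of the norm, and then identifies $v=u$ through the uniqueness of limits in $\mathcal{D}'((0,T)\times\mathbb{R}^n)$. You instead argue pointwise in $t$ on a full-measure set, using weak compactness of bounded sets in the Hilbert space $H^s$ and in the reflexive space $L^r$ at each fixed time, identifying the weak limits with $u(t)$ via the strong $L^2$ convergence, and — this is the step you rightly flag as the delicate one — preserving the \emph{sum} of the three weighted seminorms by first extracting a scalar subsequence along which $(a_k(t),b_k(t),c_k(t))$ converges, so that lower semicontinuity need not be applied to the sum directly. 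What each approach buys: the paper's argument is shorter and gets measurability of the limit for free (the weak-$*$ limit automatically lies in the Bochner space), at the price of invoking the duality identification and the separability of the predual; your argument is more elementary, avoids Bochner-space duality entirely, and makes transparent exactly which compactness is used at each time, at the price of the separate measurability step, which you correctly discharge via Pettis' theorem and the separability of $H^s\cap L^r$. Both arguments use $r>1$ in an essential way (you through reflexivity of $L^r$, the paper through separability of $L^{r'}$).
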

\begin{proof}
First, it is obvious that $X(T;M)\subset Z(T)$.
Therefore, it suffices to show that
for any sequence in $X(T;M)$ converging in $Z(T)$,
its limit belongs to $X(T;M)$.
Let $\{ u_j \}_{j=1}^{\infty} \subset X(T;M)$
converges in $Z(T)$ and let $u$ be its limit.
We note that
\begin{align*}
	L^{\infty}(0,T; H^s(\mathbb{R}^n) \cap L^r(\mathbb{R}^n))
		&= \left( L^1(0,T; H^{-s}(\mathbb{R}^n) + L^{r'}(\mathbb{R}^n)) \right)^*,
\end{align*}
where
$r' = r/(r-1)$.
This and the separability of
$L^1(0,T; H^{-s}(\mathbb{R}^n) + L^{r'}(\mathbb{R}^n))$
(in general, the sum the two separable normed space is separable)
enable us to apply
the sequential Banach--Alaoglu theorem \cite[Theorem 3.16]{Br}
From this theorem and 
\begin{align*}
	\| u_j \|_{X(T)} \le M,
\end{align*}
we can take a subsequence
$\{ u_{j(l)} \}_{l=1}^{\infty}$
and
$v \in L^{\infty}(0,T; H^s(\mathbb{R}^n) \cap L^r(\mathbb{R}^n))$
such that
\begin{align*}
	u_{j(l)} &\overset{\ast}{\rightharpoonup} v
	\quad \mbox{in}\quad L^{\infty}(0,T; H^s(\mathbb{R}^n) \cap L^r(\mathbb{R}^n))
\end{align*}
as $l \to \infty$.
Moreover, we have
\begin{align*}
	\| v\|_{X(T)} &\le \liminf_{l\to \infty} \| u_{j(l)} \|_{X(T)} \le M
\end{align*}
and hence, we see $v \in X(T;M)$.
On the other hand, both
$\{ u_j\}_{j=1}^{\infty}$ and $\{ u_{j(l)} \}_{l=1}^{\infty}$
converge in the space of the distribution
$\mathcal{D}'((0,T)\times \mathbb{R}^n)$
and hence, we obtain
\begin{align*}
	u_{j(l)} &\to v \quad \mbox{in} \quad \mathcal{D}'((0,T)\times \mathbb{R}^n) \quad (l\to \infty),\\
	u_j &\to u \quad \mbox{in} \quad \mathcal{D}'((0,T)\times \mathbb{R}^n) \quad (j\to \infty).
\end{align*}
Thus, by the uniqueness of the limit of distribution implies
$u = v$,
which shows $u \in X(T;M)$.
\end{proof}

\section{}
\label{appendixb}
In the appendix, we prove the existence of the exponents
$q_0$ and $q_j(k)$, $j\in \{1, \ldots, [s]\}$ satisfying \eqref{cond}.
\begin{lemma}\label{lem_a1}
Let $n$ be a positive integer.
Let $s_1,\ldots, s_n > 0$ and $A > 0$ satisfy
$A < \sum_{j=1}^{n} s_j$ (resp. $A \le \sum_{j=1}^{n} s_j$).
Then, there exist $a_1,\ldots, a_n > 0$ such that 
\begin{align*}
	&a_j < s_j \, (\text{resp.}\, a_j \le s_j) \, \text{ for any } j \in\{1,\ldots,n\},
	\\
	&\sum_{j=1}^{n} a_j =A.
\end{align*}
\end{lemma}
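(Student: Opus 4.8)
The plan is to prove the statement by induction on $n$. The case $n=1$ is immediate: the hypothesis $A < s_1$ (resp. $A \le s_1$) forces the single choice $a_1 = A$, which satisfies $a_1 < s_1$ (resp. $a_1 \le s_1$). For the inductive step I would isolate one summand and reduce to the $(n-1)$-variable situation. The main point is to peel off $a_n$ carefully so that the remaining budget $A - a_n$ can still be distributed among $s_1, \ldots, s_{n-1}$ under the strict (resp. non-strict) constraint; the obstacle is to ensure that after choosing $a_n$ we genuinely have $A - a_n < \sum_{j=1}^{n-1} s_j$ (resp. $\le$), while keeping $a_n > 0$ and $a_n < s_n$ (resp. $\le s_n$).

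For the strict case I would argue as follows. Since $A < \sum_{j=1}^n s_j$, set $\rho := \bigl(\sum_{j=1}^n s_j\bigr) - A > 0$ and choose $a_n := \min\{s_n, A\} - \tfrac12\min\{\rho, \min\{s_n, A\}\}$, or more simply pick any $a_n$ with
\begin{align*}
	\max\Bigl\{0,\; A - \textstyle\sum_{j=1}^{n-1} s_j\Bigr\}
	< a_n
	< \min\{s_n, A\}.
\end{align*}
Such an $a_n$ exists because the lower endpoint is strictly below the upper one: indeed $A - \sum_{j=1}^{n-1}s_j < s_n$ is exactly the hypothesis, and $0 < \min\{s_n,A\}$ holds since $s_n, A > 0$. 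Then $0 < a_n < s_n$, the residual budget satisfies $0 < A - a_n < \sum_{j=1}^{n-1} s_j$, and the inductive hypothesis applied with $A$ replaced by $A - a_n$ yields positive $a_1, \ldots, a_{n-1}$ with $a_j < s_j$ and $\sum_{j=1}^{n-1} a_j = A - a_n$, whence $\sum_{j=1}^n a_j = A$.

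The non-strict case is handled by the same scheme, simply allowing equality at the relevant endpoints: under $A \le \sum_{j=1}^n s_j$ one chooses $a_n$ in the (possibly degenerate but nonempty) range $\max\{0, A - \sum_{j=1}^{n-1}s_j\} \le a_n \le \min\{s_n, A\}$ with $a_n > 0$, arranging $A - a_n \le \sum_{j=1}^{n-1} s_j$, and then invokes the inductive hypothesis in its non-strict form. I expect the only delicate bookkeeping to be verifying that the interval for $a_n$ is nonempty and that positivity $a_n > 0$ can always be secured together with $A - a_n \ge 0$; this is where the hypotheses $s_j > 0$ and $A > 0$ are used, and it is routine once the endpoints are written out explicitly as above.
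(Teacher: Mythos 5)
Your induction works, but it is a genuinely different (and considerably longer) route than the paper's. The paper simply sets $\theta := A/\sum_{j=1}^n s_j$, which lies in $(0,1)$ (resp.\ $(0,1]$) by hypothesis, and takes $a_j := \theta s_j$; positivity, the bound $a_j < s_j$ (resp.\ $\le s_j$), and $\sum_j a_j = A$ are then all immediate, with no case analysis and no induction. Your peeling-off argument is sound in the strict case: the open interval $\bigl(\max\{0,\,A-\sum_{j<n}s_j\},\,\min\{s_n,A\}\bigr)$ is indeed nonempty for exactly the reasons you give, and the residual budget then satisfies the inductive hypothesis. One point to tighten in the non-strict case: you write that you need $A - a_n \ge 0$, but the inductive hypothesis requires $A - a_n > 0$ strictly (the remaining $a_1,\dots,a_{n-1}$ must be positive), so the choice $a_n = \min\{s_n, A\}$ is forbidden when that minimum equals $A$; conversely, when $A = \sum_{j=1}^n s_j$ you are forced to take $a_n = s_n$ exactly. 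Both situations are handled by requiring $a_n < A$ in addition to $a_n \ge \max\{0, A - \sum_{j<n}s_j\}$ (with strict inequality when the maximum is $0$), and the resulting set is still nonempty, so the argument closes — but this bookkeeping is precisely what the uniform-scaling proof avoids entirely.
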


\begin{proof}
Let $\theta \in [0,1]$ satisfy
$\theta \sum_{j=1}^{n} s_j = A$,
and we define $a_j := \theta s_j$ for $j=1,\ldots,n$.
Then, we easily see that $\{ a_j \}_{j=1}^n$ has the desired property.
\end{proof}

We apply this lemma to prove the existence of
$q_0$ and $q_j(k) \ (j =1,2, \ldots, [s])$ satisfying \eqref{cond}.

First, we consider the case of $n>2s$. Then, take $q_0$ satisfying 
\begin{align*}
& \frac{r}{p-[s]} \le q_0 
\le \frac{2n}{(p-[s])(n-2s)},
\\
&\sum_{j=1}^{[s]} k_j + \tilde{s} + n\left(\frac{[s]}{2}- \frac{1}{2} + \frac{1}{q_0} \right) \le s[s].
\end{align*}
Namely, in this case, it suffices to take $q_0 = \frac{2n}{(p-[s])(n-2s)}$,
where we note that $p \leq 1+2/(n-2s)$.
Then, noting $\sum_{j=1}^{[s]} k_j= [s]-1$, we have
\begin{align*}
	\frac{[s]}{2}- \frac{1}{2} + \frac{1}{q_0} \le \frac{s - \tilde{s}  -k_1}{n}
	+  \sum_{j=2}^{[s]}  \frac{s -k_j }{n}.
\end{align*}
Applying Lemma \ref{lem_a1} with
$A=[s]/2 -1/2 +1/q_0$, $s_1=(s-\tilde{s}-k_1)/n$, and $s_j = (s-k_j)/n$,
we find $a_1, a_2, \ldots, a_{[s]} > 0$ such that 
\begin{align}
	\label{eq0.1}
	&a_1 < \frac{s-\tilde{s}-k_1}{n},
	\\ \label{eq0.2}
	&a_j < \frac{s-k_j}{n} \text{ for any } j \in\{2,3,\ldots,[s]\},
	\\ \label{eq0.3}
	&\sum_{j=1}^{[s]} a_j =\frac{[s]}{2}- \frac{1}{2} + \frac{1}{q_0}.
\end{align}
Since $n>2s$, we have 
\begin{align*}
	&\frac{s-\tilde{s}-k_1}{n} <\frac{1}{2},
	\\
	&\frac{s-k_j}{n} < \frac{1}{2} \text{ for any } j \in\{1,2,\ldots,[s]\}.
\end{align*}
Therefore, $a_j < 1/2$ for all $j$. We define $q_j(k)$ such that
\begin{align*}
	\frac{1}{q_j(k)} = \frac{1}{2} - a_j.
\end{align*}
Then, $2 < q_j(k) < \infty$. Moreover, we have
\begin{align*}
	&\sum_{j=1}^{[s]} \frac{1}{q_j(k)}= \frac{1}{2} - \frac{1}{q_0},
	\\
	& k_1+\tilde{s}+n \left( \frac{1}{2} - \frac{1}{q_1(k)} \right)
	\le s,
	\\
	&k_j+n \left( \frac{1}{2}-\frac{1}{q_j(k)} \right)
	\le s \text{ for } j =2, 3, \ldots, [s],
\end{align*}
where these come from \eqref{eq0.1}, \eqref{eq0.2}, and \eqref{eq0.3}.

Next, we consider the case of $n \le 2s$. Take $q_0$ satisfying 
\begin{align*}
	&\frac{r}{p-[s]} \le q_0 
	< \infty,
	\\
	&\frac{[s]}{2}- \frac{1}{2} + \frac{1}{q_0} 
	< \min \left\{ \frac{1}{2}, \frac{s - \tilde{s}  -k_1}{n} \right\}
		+  \sum_{j=2}^{[s]}  \min \left\{ \frac{1}{2},\frac{s -k_j }{n}\right\}.
\end{align*}
We show that there exists $q_0$ satisfying the above inequality. The second inequality is equivalent to
\begin{align}
\label{eq0.4}
	\frac{1}{q_0} 
	< \frac{1}{2}+ \min \left\{ 0, \frac{2(s - \tilde{s}  -k_1)-n}{2n} \right\}
		+  \sum_{j=2}^{[s]}  \min \left\{ 0,\frac{2(s -k_j)-n}{2n}\right\}.
\end{align}
Define the set $J$ by
\begin{align*}
	J:=\left\{ j \in \{ 2,3,\ldots,[s]\} ; 2(s -k_j)-n<0 \right\}.
\end{align*}
We denote the number of the elements of $J$ by $\#J$ and $\{ 2,3,\cdots, [s]\} \setminus J$ by $J^c$. 

{\bf Case 1.} We consider the case of $2(s - \tilde{s}  -k_1)-n<0$. Then, we have
\begin{align*}
	\text{(R.H.S of {\eqref{eq0.4}})}
	&= \frac{1}{2}+ \frac{2(s - \tilde{s}  -k_1)-n}{2n} 
		+ \sum_{j \in J} \frac{2(s -k_j)-n}{2n}
	\\
	& = \frac{1}{2} + \frac{2[s] -2k_1-n}{2n}  + \frac{\#J(2s-n) -2\sum_{j \in J} k_j}{2n}
	\\
	& = \frac{2[s] + \#J(2s-n) -2( k_1+\sum_{j \in J} k_j )}{2n}
	\\
	& \ge  \frac{2[s] + \#J(2s-n) -2([s]-1)}{2n}
	\\
	& = \frac{\#J(2s-n) +2}{2n}>0.
\end{align*}
{\bf Case 2-I.} We consider the case of $2(s - \tilde{s}  -k_1)-n \ge 0$ and $\#J \ge 1$. Then, we have
\begin{align*}
	\text{(R.H.S of {\eqref{eq0.4}})}
	&\ge \frac{1}{2}
		+ \sum_{j \in J} \frac{2(s -k_j)-n}{2n}
	\\
	&=\frac{n+\#J(2s-n) - 2 \sum_{j \in J} k_j}{2n}
	\\
	& \ge \frac{n+\#J(2s-n) - 2 ([s]-1)}{2n}
	\\
	& \ge \frac{\#J(2s-n) +n-2s+2}{2n}
	\\
	&= \frac{(\#J-1)(2s-n)+2}{2n} >0 .
\end{align*}
{\bf Case 2-II.} We consider the case of $2(s - \tilde{s}  -k_1)-n \ge 0$ and $\#J =0$.
Then,
\begin{align*}
	\text{(R.H.S of {\eqref{eq0.4}})}
	=\frac{1}{2}.
\end{align*}
Therefore,  (R.H.S of {\eqref{eq0.4}}) is positive and
thus it is enough to take sufficiently large $q_0$. 
Applying Lemma \ref{lem_a1} with
$A=[s]/2 -1/2 +1/q_0$, $s_1=\min\{ 1/2, (s-\tilde{s}-k_1)/n\}$,
and $s_j = \min\{1/2, (s-k_j)/n\}$, we find
$a_1, a_2, \ldots, a_{[s]} > 0$ such that 
\begin{align}
	\label{eq0.5}
	&a_1 < \min \left\{ \frac{1}{2}, \frac{s - \tilde{s}  -k_1}{n} \right\},
	\\ \label{eq0.6}
	&a_j < \min \left\{ \frac{1}{2}, \frac{s  -k_j}{n} \right\} \text{ for any } j \in\{2,3,\cdots,[s]\},
	\\ \label{eq0.7}
	&\sum_{j=1}^{[s]} a_j =\frac{[s]}{2}- \frac{1}{2} + \frac{1}{q_0}.
\end{align}
We define $q_j(k)$ such that
\begin{align*}
	\frac{1}{q_j(k)} = \frac{1}{2} - a_j.
\end{align*}
Then, $2 < q_j(k) < \infty$. Moreover, we have the desired properties
\begin{align*}
	&\sum_{j=1}^{[s]} \frac{1}{q_j(k)}= \frac{1}{2} - \frac{1}{q_0},
	\\
	& k_1+\tilde{s}+n \left( \frac{1}{2} - \frac{1}{q_1(k)} \right)
	\le s,
	\\
	&k_j+n \left( \frac{1}{2}-\frac{1}{q_j(k)} \right)
	\le s \text{ for } j =2, 3, \ldots, [s],
\end{align*}
where these come from \eqref{eq0.5}, \eqref{eq0.6}, and \eqref{eq0.7}.


\section*{Acknowledgments}
This work was supported by JSPS KAKENHI Grant Numbers
JP15K17571, JP16K17624, JP16K17625, JP17J01263. 



\end{document}